\newtheorem{theorem}{Theorem}[section]
\newtheorem{lemma}[theorem]{Lemma}
\newtheorem{corollary}[theorem]{Corollary}
\newcounter{claims}[theorem]
\newtheorem{claim}[claims]{Claim}
\theoremstyle{definition}
\newtheorem{definition}[theorem]{Definition}
\newtheorem{question}[theorem]{Question}
\theoremstyle{remark}
\newcommand{\la}{\langle}
\newcommand{\ra}{\rangle}
\DeclareMathOperator{\dom}{dom}
\newcommand{\concat}[0]{\textrm{\^{}}}
\newcommand{\Q}{\mathbb{Q}}
\newcommand{\uhr}[1]{\! \upharpoonright_{#1}}
\renewcommand \leq {\leqslant}
\renewcommand \geq {\geqslant}
\renewcommand \le {\leqslant}
\renewcommand \ge {\geqslant}
\newcommand{\converge}{\!\!\downarrow}
\renewcommand \phi {\varphi}
\newcommand{\cat}{\widehat{\phantom{\alpha}}}
\newcommand{\Psf}{\mathsf{P}}
\newcommand{\Qsf}{\mathsf{Q}}
\newcommand{\Ccal}{\mathcal{C}}
\newcommand{\Dcal}{\mathcal{D}}
\newcommand{\Ecal}{\mathcal{E}}
\newcommand{\Lcal}{\mathcal{L}}
\newcommand{\Mcal}{\mathcal{M}}
\newcommand{\Scal}{\mathcal{S}}
\newcommand{\Wcal}{\mathcal{W}}
\newcommand{\uh}{{\upharpoonright}}
\renewcommand{\setminus}{\smallsetminus}
\def\qt#1{``#1''}%
\newcommand{\sss}[1]{\ensuremath{\sf{#1}}}
\DeclareMathOperator{\rca}{\sss{RCA}_0}
\DeclareMathOperator{\aca}{\sss{ACA}}
\DeclareMathOperator{\wkl}{\sss{WKL}}
\DeclareMathOperator{\rt}{\sss{RT}}
\DeclareMathOperator{\ads}{\sss{ADS}}
\DeclareMathOperator{\coh}{\sss{COH}}
\DeclareMathOperator{\ts}{\sss{TS}}
\DeclareMathOperator{\emo}{\sss{EM}}
\begin{document}

\title{Relationships between computability-theoretic properties of problems}
\author{ \and Rod Downey \and Noam Greenberg \and Matthew Harrison-Trainor \and Ludovic Patey \and Dan Turetsky}

\makeatletter
\def\blfootnote{\xdef\@thefnmark{}\@footnotetext}
\makeatother

\maketitle

\begin{abstract}
A problem is a multivalued function from a set of \emph{instances} to a set of \emph{solutions}. We consider only instances and solutions coded by sets of integers. A problem admits preservation of some computability-theoretic weakness property if every computable instance of the problem admits a solution relative to which the property holds. For example, cone avoidance is the ability, given a non-computable set $A$ and a computable instance of a problem $\Psf$, to find a solution relative to which $A$ is still non-computable. 

In this article, we compare relativized versions of computability-theoretic notions of preservation which have been studied in reverse mathematics, and prove that the ones which were not already separated by natural statements in the literature actually coincide.  In particular, we prove that it is equivalent to admit avoidance of 1 cone, of $\omega$ cones, of 1 hyperimmunity or of 1 non-$\Sigma^0_1$ definition.
We also prove that the hierarchies of preservation of hyperimmunity and non-$\Sigma^0_1$ definitions coincide. On the other hand, none of these notions coincide in a non-relativized setting.
\end{abstract}

\section{Introduction}

In this article, we classify computability-theoretic preservation properties studied in reverse mathematics, namely cone avoidance, preservation of hyperimmunities, preservation of non-$\Sigma^0_1$ definitions, among others. Many of these preservation properties have already been separated using natural problems in reverse mathematics -- that is, there is a natural problem which is known to admit preservation of one property but not preservation of the other.  The observation that emerges from our work is that those properties which have not already been separated in fact coincide.\footnote{The authors are thankful to Mariya Soskova for interesting comments and discussions about cototal degrees.}

Reverse mathematics is a foundational program whose goal is to determine the optimal axioms for proving ordinary theorems. It uses subsystems of second-order arithmetics, with a base theory, $\rca$, capturing \emph{computable mathematics}. See Simpson's book~\cite{Simpson2009Subsystems} for a reference in reverse mathematics. A structure in this language is a tuple $(N, \Scal, +_N, *_N, <_N, 0_N, 1_N)$, where $N$ stands for the first-order part, and $\Scal$ for the set of reals. We are in particular interested in structures in which the first-order part consists of the standard integers $\omega$, equipped with the natural operations. These structures are called \emph{$\omega$-structures}, and are fully specified by their second-order part $\Scal$. The choice of the axioms of $\rca$ yields a nice characterization of the second-order part of $\omega$-models of $\rca$ in terms of \emph{Turing ideals}.

\begin{definition}
A \emph{Turing ideal} is a collection of reals $\Scal \subseteq 2^\omega$ which is closed under the effective join and downward-closed under the Turing reduction. In other words
\begin{itemize}
	\item[(a)] $\forall X, Y \in \Scal, X \oplus Y = \{2n : n \in X\} \cup \{2n+1 : n \in Y\} \in \Scal$
	\item[(b)] $\forall X \in \Scal, \forall Y \leq_T X, Y \in \Scal$ 
\end{itemize}
\end{definition}

Many statements studied in reverse mathematics can be formulated as \emph{mathematical problems}, with instances and solutions. For example, weak K\"onig's lemma ($\wkl$) asserts that every infinite, finitely branching subtree of $2^{<\omega}$ has an infinite path. Here, an instance is such a tree $T$, and a solution to $T$ is an infinite path through it. An $\omega$-structure $\Mcal$ with second-order part $\Scal$ is a \emph{model} of a problem $\Psf$ (written $\Mcal \models \Psf$) if every instance in $\Scal$ has a solution in it.  In this case we also say that $\Psf$ holds in $\Scal$. In order to separate a problem $\Psf$ from another problem $\Qsf$ in reverse mathematics, one usually constructs a Turing ideal $\Scal$ in which $\Psf$ holds, but not $\Qsf$. However, when closing the Turing ideal with solution to instances of $\Psf$, one must be careful not to make it a model of $\Qsf$. This motivates the use of preservation properties.

\begin{definition}\label{def:preservation_of_W}
Fix a collection of sets $\Wcal \subseteq 2^\omega$ downward-closed under Turing reduction. A problem $\Psf$ \emph{admits preservation of $\Wcal$} if for every set $Z \in \Wcal$ and every $Z$-computable instance $X$ of $\Psf$, there is a solution $Y$ to $X$ such that $Z \oplus Y \in \Wcal$.
\end{definition}

The following basic lemma is at the core of separations in reverse mathematics.

\begin{lemma}
Suppose a problem $\Psf$ admits preservation of some collection $\Wcal$, but another problem $\Qsf$ does not. Then there is a Turing ideal $\Scal \subseteq \Wcal$ in which $\Psf$ holds, but not $\Qsf$.
\end{lemma}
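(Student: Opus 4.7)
Proof plan: The idea is to build $\Scal$ as a countable ascending union of principal Turing ideals, each generated by a single set in $\Wcal$, iteratively adding solutions to instances of $\Psf$ so that $\Psf$ holds in the limit while $\Qsf$ does not.

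First, since $\Qsf$ does not admit preservation of $\Wcal$, unpack the definition to extract a set $Z \in \Wcal$ and a $Z$-computable instance $X^{*}$ of $\Qsf$ such that for every solution $Y$ to $X^{*}$, we have $Z \oplus Y \notin \Wcal$. This $X^{*}$ will be our witness to the failure of $\Qsf$ in $\Scal$.

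Next, I would construct a sequence $Z = Z_{0} \leq_{T} Z_{1} \leq_{T} Z_{2} \leq_{T} \cdots$ with each $Z_{s} \in \Wcal$ and set $\Scal_{s} = \{W : W \leq_{T} Z_{s}\}$, and finally $\Scal = \bigcup_{s} \Scal_{s}$. Fix a bookkeeping enumeration of pairs $(s,e)$ so that, as $s$ ranges over $\omega$, every pair is eventually considered. At stage $s$, look at the $e$-th set computable from $Z_{s}$; if this set $X_{s}$ is an instance of $\Psf$, use the fact that $\Psf$ admits preservation of $\Wcal$ applied to the parameter $Z_{s} \in \Wcal$ and the $Z_{s}$-computable instance $X_{s}$ to obtain a solution $Y_{s}$ with $Z_{s} \oplus Y_{s} \in \Wcal$; then set $Z_{s+1} = Z_{s} \oplus Y_{s}$. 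Otherwise set $Z_{s+1} = Z_{s}$. In either case $Z_{s+1} \in \Wcal$.

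Finally, I would verify the three required properties. The set $\Scal$ is a Turing ideal as an increasing union of Turing ideals, and $\Scal \subseteq \Wcal$ because each $Z_{s} \in \Wcal$ and $\Wcal$ is downward-closed. For $\Psf$ to hold in $\Scal$, note that every instance $X \in \Scal$ is $Z_{s}$-computable for some $s$, so it appears as the $e$-th $Z_{s}$-computable set for some $e$; the bookkeeping then ensures that at some later stage a solution $Y$ is added with $Y \leq_{T} Z_{s+1} \in \Scal$. For $\Qsf$ to fail, observe that $X^{*} \leq_{T} Z = Z_{0}$ so $X^{*} \in \Scal$, but any solution $Y \in \Scal$ would satisfy $Y \leq_{T} Z_{s}$ for some $s$, hence $Z \oplus Y \leq_{T} Z_{s} \in \Wcal$, whence $Z \oplus Y \in \Wcal$ by downward closure, contradicting the choice of $X^{*}$.

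The only subtlety is the bookkeeping that guarantees every instance of $\Psf$ appearing in the final $\Scal$ is eventually handled; this is a routine dovetailing argument, since $\Scal$ will be countable and each element of $\Scal$ is already $Z_{s}$-computable by the stage at which it first appears.
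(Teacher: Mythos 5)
Your proof is correct and follows essentially the same approach as the paper's: extract the witness $Z$ and $\Qsf$-instance $X^{*}$ from the failure of preservation, then build an increasing chain in $\Wcal$ by iteratively joining in $\Psf$-solutions to dovetailed instances, and observe that $\Qsf$ must fail in the resulting ideal since any solution to $X^{*}$ would push $Z \oplus Y$ out of $\Wcal$. The only cosmetic difference is that you track a single set $Z_{s}$ and join the new solution onto it, whereas the paper keeps the family $(Z_{s})$ and works with $\bigoplus_{s<n} Z_{s}$; these are equivalent.
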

\begin{proof}
Since $\Qsf$ does not admit preservation of $\Wcal$, there is some $Z \in \Wcal$, and a $\Qsf$-instance $X_\Qsf \leq_T Z$ such that for every solution $Y$ to $X_\Qsf$, $Z \oplus Y \not \in \Wcal$. We will build a Turing ideal $\Scal \subseteq \Wcal$ containing $Z$ and in which $\Psf$ holds. In particular, $\Qsf$ cannot hold in any such Turing ideal.
We build a countable sequence of sets $Z_0, Z_1, \dots$ such that for every $n \in \omega$, $\bigoplus_{s < n} Z_s \in \Wcal$, and for every $\Psf$-instance $X \leq_T \bigoplus_{s < n} Z_s$, there is some $m \in \omega$ such that $Z_m$ is a $\Psf$-solution to $X$. Start with $Z_0 = Z$. 
Having defined $Z_0, \dots, Z_{n-1}$, pick the next $\Psf$-instance $X \leq_T \bigoplus_{s < n} Z_s$ by ensuring that each instance will receive attention at a finite stage. Since $\Psf$ admits preservation of $\Wcal$, there is a $\Psf$-solution $Z_n$ to $X$ such that $\bigoplus_{s \leq n} Z_s \in \Wcal_n$. Then go to the next stage.
The collection $\Scal = \{ X : (\exists s) X \leq_T \bigoplus_{s < n} Z_s \}$ is a Turing ideal included in $\Wcal$ in which $\Psf$ holds but not~$\Qsf$.
\end{proof}

Many statements in reverse mathematics, mostly coming from Ramsey theory, have been separated by looking at the appropriate computability-theoretic notion of preservation. We now detail some outstanding ones, which will serve as a basis for our classification study.

\subsection{Cone avoidance}

Perhaps the most important property of preservation in reverse mathematics is the notion of \emph{cone avoidance}, both for its intrinsic significance, namely, the inability to code sets into the solutions of a computable instance of a problem, and as a tool to separate statements from the Arithmetic Comprehension Axiom ($\aca$).

\begin{definition}
Fix $n \leq \omega$. A problem $\Psf$ \emph{admits avoidance of $n$ cones} if for every set $Z$ and every collection $\{ B_s : s < n \}$ of non-$Z$-computable sets,
	every $\Psf$-instance $X \leq_T Z$ has a solution $Y$ such that for every $s < n$, $B_s \not \leq_T Z \oplus Y$.
\end{definition}
This definition can be understood in terms of \Cref{def:preservation_of_W} by defining
\[
\Wcal(B_s : s < n) = \{ Y : (\forall s < n)\, B_s \not \le_T Y\}.
\]
Then $\Psf$ admits avoidance of $n$ cones precisely if it admits preservation of $\Wcal(B_s : s < n)$ for every collection $\{B_s : s < n\}$. 
A similar analysis applies to all of the avoidance properties we will study, although for the rest we will not take the time to make it explicit.

Jockusch and Soare~\cite[Theorem 2.5]{Jockusch197201} proved that weak K\"onig's lemma ($\wkl$) admits avoidance of $\omega$ cones.\footnote{Avoidance of $\omega$ cones is known as \emph{cone avoidance} in the literature.} Seetapun's celebrated theorem (see~\cite{Seetapun1995strength}) states that Ramsey's theorem for pairs  ($\rt^2_2$) admits avoidance of $\omega$ cones, answering a long-standing open question. On the other hand, Jockusch~\cite{Jockusch1972Ramseys} proved that Ramsey's theorem for triples ($\rt^3_2$) does not. Later, Wang~\cite{Wang2014Some} proved the surprising result that for every $n \geq 2$, there is some $k \in \omega$ such that $\rt^n_{k+1,k}$ admits avoidance of $\omega$ cones, where $\rt^n_{k+1,k}$ asserts that for every coloring $f : [\omega]^n \to k+1$, there is an infinite set $H \subseteq \omega$ such that $|f[H]^n| \leq k$. By looking at the literature, one can observe that all the proofs of cone avoidance hold for $\omega$ cones simultaneously. In this paper, we justify this observation by proving that avoidance of 1 cone and of $\omega$ cones coincide.

\subsection{Preservation of non-$\Sigma^0_1$ definitions}

Wang~\cite{Wang2014Definability} dramatically simplified separation proofs of the Erd\H{o}s-Moser ($\emo$) from the Ascending Descending Sequence ($\ads$) of Lerman, Solomon and Towsner~\cite{Lerman2013Separating} by proving that some problems \qt{preserve} the arithmetical hierarchy, in the sense that given a fixed strictly non-$\Sigma^0_n$ set $A$ and given a $\Psf$-instance, there is a solution $Y$ such that $A$ is not $\Sigma^0_n(Y)$. We consider the case of non-$\Sigma^0_1$ sets.

\begin{definition}
Fix $n \leq \omega$. A problem $\Psf$ \emph{admits preservation of $n$ non-$\Sigma^0_1$ definitions} if for every set $Z$ and every collection $\{ B_s : s < n \}$ of non-$Z$-c.e.\ sets,
	every $\Psf$-instance $X \leq_T Z$ has a solution $Y$ such that for every $s < n$, $B_s$ is not $Z \oplus Y$-c.e.
\end{definition}

This framework was very successful in proving separation results between Ramsey-like statements over $\omega$-models.
Wang~\cite{Wang2014Definability} proved that $\wkl$ and the Erd\H{o}s-Moser theorem ($\emo$) admits preservation of $\omega$ non-$\Sigma^0_1$ definitions, while the thin set theorem for pairs ($\ts^2_\omega$) does not. 
Patey~\cite{Patey2016weakness} proved that for every $k \geq 1$, $\rt^2_{k+1,k}$ admits preservation of $k$ but not $k+1$ non-$\Sigma^0_1$ definitions. In particular, Ramsey's theorem for pairs and two colors admits preservation of 1 but not 2 non-$\Sigma^0_1$ definitions.

\subsection{Preservation of hyperimmunities}

The proof that Ramsey's theorem for triples does not admit cone avoidance consists of constructing a computable coloring $f : [\omega]^3 \to 2$ such that every $f$-homogeneous set $H = \{x_0 < x_1 < \dots \}$ is so sparse that its \emph{principal function} $p_H : \omega \to \omega$ defined by $p_H(n) = x_n$ grows faster than the settling time of the halting set. Actually, all the proofs that a Ramsey-like statement does not admit cone avoidance exploit the existence of instances whose solutions are all sufficiently sparse to compute fast-growing functions dominating moduli of computation~\cite{Patey2019Ramsey}. It is therefore natural to consider which problems have the ability to compute fast-growing functions.

A function $f : \omega \to \omega$ is \emph{hyperimmune} if it is not dominated by any computable function. An infinite set $A = \{x_0 < x_1 < \dots \}$ is \emph{hyperimmune} if its principal function $p_A$ is hyperimmune. Equivalently, a set $A$ is hyperimmune if for every computable sequence of pairwise disjoint non-empty finite coded sets $F_0, F_1, \dots$, there is some $n \in \omega$ such that $A \cap F_n = \emptyset$.

\begin{definition}
Fix $n \leq \omega$. A problem $\Psf$ \emph{admits preservation of $n$ hyperimmunities} if for every set $Z$ and every collection $\{ f_s : s < n \}$ of $Z$-hyperimmune functions,
	every $\Psf$-instance $X \leq_T Z$ has a solution $Y$ such that for every $s < n$, $f_s$ is $Z \oplus Y$-hyperimmune.
\end{definition}

Jockusch and Soare~\cite[Theorem 2.4]{Jockusch197201} proved that $\wkl$ admits preservation of $\omega$ hyperimmunities (in fact, every computable instance of $\wkl$ has a solution of hyperimmune-free degree). Patey~\cite{Patey2017Iterative} proved that the Erd\H{o}s-Moser theorem admits preservation of $\omega$ hyperimmunities and that for every $k \geq 1$, $\rt^2_{k+1,k}$ admits preservation of $k$, but not $k+1$, hyperimmunities. He also proved that the thin set theorem for pairs admits preservation of $k$ hyperimmunities for every $k \in \omega$, but not of $\omega$ hyperimmunities.

As it happens, all the separations over $\omega$-models and over computable reduction which have been proven by notions of preservation of non-$\Sigma^0_1$ definitions can also be proved by preservation of hyperimmunities, and vice versa. We prove in this paper that this is not a coincidence, and that the two notions of preservation are indeed equivalent.

\subsection{Constant-bound trace avoidance}

Both the original proof of cone avoidance of Ramsey's theorem for pairs by Seetapun~\cite{Seetapun1995strength} and the proof by Cholak, Jockusch and Slaman~\cite{Cholak2001strength} involve Mathias-like notions of forcing within models of weak K\"onig's lemma. Their proofs seem to make an essential use of compactness, and the community naturally wondered whether this use was necessary. Liu~\cite{Liu2012RT22} recently negatively answered the long-standing open question of whether Ramsey's theorem for pairs implies weak K\"onig's lemma in reverse mathematics. He later~\cite{Liu2015Cone} refined his argument and proved that $\rt^2_2$ does not even imply the existence of Martin-L\"of randoms, using the notion of constant-bound trace avoidance for closed sets\footnote{In his article, Liu calls this notion \emph{constant-bound enumeration avoidance}. We rechristen it in keeping with the notion of traces as studied in algorithmic randomness~\cite{TeZa01}}. 

Given a closed set $\Ccal \subseteq 2^\omega$, a \emph{trace} is a collection
of finite coded sets of strings $F_0, F_1, \dots$ such that for every $n \in \omega$, 
$F_n$ contains only strings of length exactly $n$, and $\Ccal \cap [F_n] \neq \emptyset$ where $[F_n]$ is the clopen set generated by $F_n$. In other words, for every $n \in \omega$, there is a string $\sigma \in F_n$ with $|\sigma| = n$ such that $\sigma \prec P$ for some $P \in \Ccal$. A \emph{$k$-trace} of $\Ccal$ is a trace such that $|F_n| = k$ for every $n \in \omega$. A \emph{constant-bound trace} of $\Ccal$ is a $k$-trace for some $k \in \omega$.

\begin{definition}
Fix $n \leq \omega$. A problem $\Psf$ \emph{admits avoidance of constant-bound traces for $n$ closed sets} if for every set $Z$ and every collection of closed sets $\{ \Ccal_s \subseteq 2^\omega : s < n \}$ with no $Z$-computable constant-bound trace, 
	every $\Psf$-instance $X \leq_T Z$ has a solution $Y$ such that for every $s < n$, $\Ccal_s$ has no $Z \oplus Y$-computable constant-bound trace.
\end{definition}

This notion of avoidance, which at first sight seems slightly more artificial, happens to be a very powerful tool to prove that Ramsey-like statements do not imply notions of compactness.

Liu~\cite{Liu2015Cone} proved that Ramsey's theorem for pairs and two colors ($\rt^2_2$) admits avoidance of constant-bound traces for 1 closed set, while weak K\"onig's lemma ($\wkl$) does not. Patey~\cite{PateyCombinatorial} proved that the Erd\H{o}s-Moser theorem ($\emo$) admits avoidance of constant-bound traces for $\omega$ closed sets, and that for every $k \geq 1$, $\rt^2_{k+1,k}$ admits avoidance of constant-bound traces for $k$ but not $k+1$ closed sets, and that $\ts^2_\omega$ admits avoidance of constant-bound traces for $k$ closed sets for every $k \in \omega$, but not for $\omega$ closed sets.

\subsection{Other preservation notions}

As explained, the notion of hyperimmunity can be expressed both in terms of fast-growing functions, and as sets which cannot be traced by computable strong arrays. Hyperimmunity strengthens another property of sets called \emph{immunity}, which refers to the impossibility of computing an infinite subset of the set. Immunity is a natural notion to look at when considering Ramsey-like theorems, since their sets of solutions are closed under infinite subsets. Although hyperimmunity is a strengthening of immunity, preservation of hyperimmunity is actually strictly weaker than preservation of immunity.

\begin{definition}
Fix $n \leq \omega$. A problem $\Psf$ \emph{admits preservation of $n$ immunities} if for every set $Z$ and every collection $\{ B_s : s < n \}$ of $Z$-immune sets,
	every $\Psf$-instance $X \leq_T Z$ has a solution $Y$ such that for every $s < n$, $B_s$ is $Z \oplus Y$-immune.
\end{definition}

Very few statements in reverse mathematics admit preservation of $\omega$ immunities. The most notable is the cohesiveness principle ($\coh$). All the statements which are known to admit preservation of $\omega$ immunities actually also preserve the following seemingly stronger notion.

\begin{definition}
Fix $n \leq \omega$. A problem $\Psf$ \emph{admits avoidance of $n$ closed sets in the Baire space} if for every set $Z$ and every collection $\{ \Ccal_s : s < n \}$ of closed sets in the Baire space with no $Z$-computable member,
	every $\Psf$-instance $X \leq_T Z$ has a solution $Y$ such that for every $s < n$, $\Ccal_s$ has no $Z \oplus Y$-computable member.
\end{definition}

A similar notion can be defined for closed sets in the Cantor space. We will prove that avoiding closed sets in the Cantor space or in the Baire space are equivalent. We leave open the question whether every problem admitting preservation of $\omega$ immunities also admits avoidance of $\omega$ closed sets.

\subsection{Summary of the relations between properties of preservation}

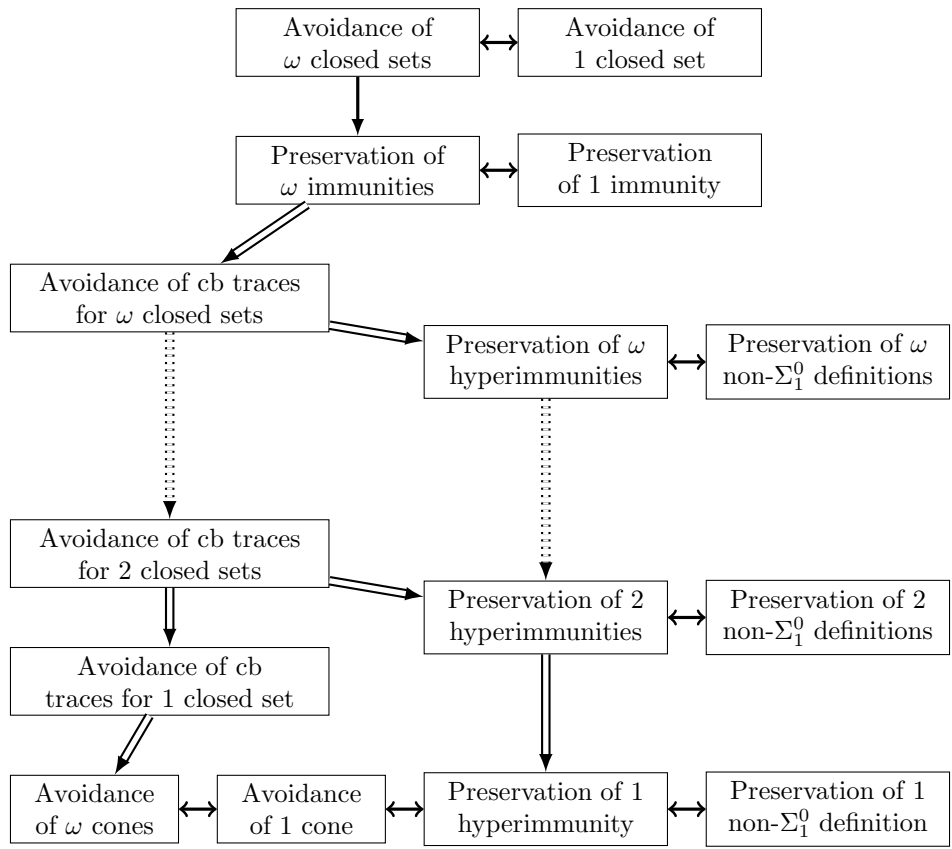
\begin{figure}[htbp]
\begin{center}
\begin{tikzpicture}[x=2.5cm, y=1.7cm, 
	node/.style={draw,minimum size=2em},
	impl/.style={draw,very thick,-latex},
	strict/.style={draw, thick, -latex, double distance=2pt},
	nonimpl/.style={draw, very thick, dotted, -latex}]

	\node[node,text width=3cm,align=center] (omegaclosedset) at (2, 6) {Avoidance of $\omega$ closed sets};
	\node[node,text width=3cm,align=center] (oneclosedset) at (3.5, 6) {Avoidance of 1 closed set};
	\node[node,text width=3cm,align=center] (omegaimmunities) at (2, 5) {Preservation of $\omega$ immunities};
	\node[node,text width=3cm,align=center] (oneimmunity) at (3.5, 5) {Preservation of 1 immunity};
	\node[node,text width=4cm,align=center] (omegacbavoidance) at (1, 4) {Avoidance of cb traces for $\omega$ closed sets};
	\node[node,text width=4cm,align=center] (twocbavoidance) at (1, 2) {Avoidance of cb traces for 2 closed sets};
	\node[node,text width=4cm,align=center] (onecbavoidance) at (1, 1) {Avoidance of cb traces for 1 closed set};
	\node[node,text width=3cm,align=center] (omegahyperimmunities) at (3, 3.5) {Preservation of $\omega$ hyperimmunities};
	\node[node,text width=3cm,align=center] (omeganonce) at (4.5, 3.5) {Preservation of $\omega$ non-$\Sigma^0_1$ definitions};
	\node[node,text width=3cm,align=center] (twohyperimmunities) at (3, 1.5) {Preservation of 2 hyperimmunities};
	\node[node,text width=3cm,align=center] (twononce) at (4.5, 1.5) {Preservation of 2 non-$\Sigma^0_1$ definitions};
	\node[node,text width=3cm,align=center] (onehyperimmunity) at (3, 0) {Preservation of 1 hyperimmunity};
	\node[node,text width=3cm,align=center] (onenonce) at (4.5, 0) {Preservation of 1 non-$\Sigma^0_1$ definition};
	\node[node,text width=2cm,align=center] (onecone) at (1.7, 0) {Avoidance of 1 cone};
\node[node,text width=2cm,align=center] (omegacones) at (0.6, 0) {Avoidance of $\omega$ cones};

	\draw[impl,<->] (omegaclosedset) -- (oneclosedset);
	\draw[impl,<->] (omegaimmunities) -- (oneimmunity);
	\draw[impl] (omegaclosedset) -- (omegaimmunities);
	\draw[strict] (omegaimmunities) -- (omegacbavoidance);
	\draw[strict,dotted] (omegacbavoidance) -- (twocbavoidance);
	\draw[strict] (twocbavoidance) -- (onecbavoidance);
	\draw[impl,<->] (omegahyperimmunities) -- (omeganonce);
	\draw[strict] (omegacbavoidance) -- (omegahyperimmunities);
	\draw[strict,dotted] (omegahyperimmunities) -- (twohyperimmunities);
	\draw[impl,<->] (twohyperimmunities) -- (twononce);
	\draw[strict] (twocbavoidance) -- (twohyperimmunities);
	\draw[impl,<->] (omegacones) -- (onecone);
	\draw[impl,<->] (onecone) -- (onehyperimmunity);
	\draw[impl,<->] (onehyperimmunity) -- (onenonce);
	\draw[strict] (onecbavoidance) -- (omegacones);
	\draw[strict] (twohyperimmunities) -- (onehyperimmunity);

\end{tikzpicture}

\end{center}
\caption{Diagram of relations between properties of preservation. A double arrow denotes a strict implication, a dotted double arrows express a strict hierarchy, while a bidirectional arrow is an equivalence. The only unknown arrow is the reversal from preservation of $\omega$ immunities to avoidance of $\omega$ closed sets.}
\end{figure}

The notions of preservation admit a combinatorial counterpart, in which no effectiveness restriction is imposed on the instance of the problem. This is the notion of \emph{strong preservation}.

\begin{definition}
Fix a collection of sets $\Wcal \subseteq 2^\omega$ downward-closed under the Turing reduction. A problem $\Psf$ \emph{admits strong preservation of $\Wcal$} if for every set $Z \in \Wcal$ and every (not-necessarily $Z$-computable) instance $X$ of $\Psf$, there is a solution $Y$ to $X$ such that $Z \oplus Y \in \Wcal$.
\end{definition}

Considering strong preservation has two main justifications. First, its reflects the \emph{combinatorial} weakness of problems, as opposed to the \emph{computational} weakness of standard notions of preservation. Indeed, by proving that the infinite pigeonhole principle admits strong avoidance of $\omega$ cones, Dzhafarov and Jockusch~\cite{Dzhafarov2009Ramseys} show that there is an intrinsic combinatorial weakness in the pigeonhole principle which prevents the coding of arbitrary sets in the collection of solutions. On the other hand, the proof of Seetapun~\cite{Seetapun1995strength} that Ramsey's theorem for pairs admits avoidance of $\omega$ cones strongly relies on the effectiveness of the colorings of pairs. When removing the effectiveness restriction, one can code any hyperarithmetical set, and therefore $\rt^2_2$ does not admit strong avoidance of $\omega$ cones. These results can be considered as interesting \emph{per se}.

The second reason is more technical, and specific to Ramsey-like statements. Many such statements are about colorings over $[\omega]^n$ and are parametrized by the size $n$ of the tuples. See for example Ramsey's theorem~\cite{Jockusch1972Ramseys}, the thin set~\cite{Cholak2001Free}, free set~\cite{Cholak2001Free}, and rainbow Ramsey~\cite{Wang2014Some} theorems. Such theorems admit inductive proofs based on $n$. Proofs that such a statement $\Psf^{n+1}$ admits some preservation are usually obtained by proving that $\Psf^n$ admits strong preservation of the property, and then deducing the non-strong version for $\Psf^{n+1}$. One can even obtain reversals for the notions of avoidance mentioned in this article. See for example Theorem 1.5. of~\cite{Cholak2019Thin}. 

One can directly deduce implications between strong notions of preservation from their corresponding weak notions of preservation.

\begin{theorem}
Suppose preservation of $\Wcal_1$ implies preservation of $\Wcal_2$.
Then strong preservation of $\Wcal_1$ implies strong preservation of $\Wcal_2$.
\end{theorem}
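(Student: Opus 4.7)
The plan is to reduce strong preservation of a class $\Wcal$ to ordinary preservation of $\Wcal$ by associating to each instance of $\Psf$ an auxiliary one-instance problem. Given an instance $X$ of $\Psf$, I would define the problem $\Psf_X$ whose unique instance is $\emptyset$ and whose set of solutions to $\emptyset$ is exactly the set of $\Psf$-solutions of $X$. Since $\emptyset$ is computable, $\emptyset \leq_T Z$ holds for every $Z$, so $\Psf_X$ admits preservation of $\Wcal$ if and only if for every $Z \in \Wcal$ there is a $\Psf$-solution $Y$ of $X$ with $Z \oplus Y \in \Wcal$. Comparing this with the definition of strong preservation, one obtains the key equivalence: $\Psf$ admits strong preservation of $\Wcal$ if and only if $\Psf_X$ admits preservation of $\Wcal$ for every instance $X$ of $\Psf$.

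With this equivalence in hand, the theorem is essentially a one-line deduction. Assume $\Psf$ admits strong preservation of $\Wcal_1$. Then for every instance $X$ of $\Psf$, the auxiliary problem $\Psf_X$ admits preservation of $\Wcal_1$; invoking the hypothesis that preservation of $\Wcal_1$ implies preservation of $\Wcal_2$ yields that each $\Psf_X$ admits preservation of $\Wcal_2$, and applying the equivalence in the other direction gives strong preservation of $\Wcal_2$ for $\Psf$. There is no real obstacle here: the whole content of the proof is the recognition that strong preservation is a uniform form of preservation over the one-instance subproblems $\Psf_X$. The only thing to verify is that these auxiliary problems are bona fide problems in the sense of the paper, namely multivalued functions with instances and solutions coded by sets of integers, which is immediate from the construction.
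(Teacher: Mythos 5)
Your proposal is correct and takes essentially the same approach as the paper: the paper also passes to the auxiliary one-instance problem (denoted $\tilde{\Psf}$ there, identical to your $\Psf_X$) and uses the observation that preservation for $\tilde{\Psf}$ amounts to strong preservation for $\Psf$ at the instance $X$. Your packaging of this as an explicit biconditional is a harmless stylistic reorganization of the same argument.
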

\begin{proof}
Let $\Psf$ be a problem which admits strong preservation of $\Wcal_1$. We prove that $\Psf$ admits strong preservation of $\Wcal_2$.
Fix a set $Z \in \Wcal_2$ and an instance $X$ of $\Psf$.
Let $\tilde{\Psf}$ be the problem whose unique instance is $\emptyset$, and whose solutions are the $\Psf$-solutions of $X$. In particular, $\tilde{\Psf}$ admits preservation of $\Wcal_1$, so it admits preservation of $\Wcal_2$. Let $Y$ be a $\tilde{\Psf}$-solution to the $\tilde{\Psf}$-instance $\emptyset$ such that $Z \oplus Y \in \Wcal_2$. In particular, $Y$ is a $\Psf$-solution to the $\Psf$-instance $X$.
\end{proof}

The remainder of this article is devoted to proving the equivalences and non-equivalences of the notions of preservation presented above.

\section{Avoiding cones}

The goal of this section is to prove the following theorem. The variety of notions of preservation which happen to be equivalent can be taken as an argument in favor of the naturality of the notion.

\begin{theorem}\label{thm:avoiding-cones-equivalence}
	Let $\Psf$ be a problem. Then the following are equivalent:
	\begin{enumerate}
		\item $\Psf$ admits avoidance of 1 cone.
		\item $\Psf$ admits avoidance of $\omega$ cones.
		\item $\Psf$ admits preservation of 1 non-$\Sigma^0_1$ definition.
		\item $\Psf$ admits preservation of 1 hyperimmunity.
	\end{enumerate}
\end{theorem}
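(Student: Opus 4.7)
The strategy is a cycle of implications through property (1). The trivial direction (2) $\Rightarrow$ (1) is immediate.

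For (3) $\Rightarrow$ (1), I would use the observation that a set $B$ is not $Z$-computable if and only if $B \oplus \overline{B}$ is not $Z$-c.e., since an enumeration of $B \oplus \overline{B}$ yields enumerations of both $B$ and $\overline{B}$, forcing $B$ to be $Z$-computable. So for non-$Z$-computable $B$, applying (3) to $B \oplus \overline{B}$ yields a solution $Y$ with $B \oplus \overline{B}$ not $(Z \oplus Y)$-c.e., hence $B \not\le_T Z \oplus Y$.

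The remaining implications rest on two encoding lemmas. \emph{Lemma A:} for every non-$Z$-computable (respectively, non-$Z$-c.e.) set $B$, one can build a $Z$-hyperimmune function $f$ such that whenever $B \le_T Z \oplus Y$ (respectively, $B$ is $(Z \oplus Y)$-c.e.), some $(Z \oplus Y)$-computable function dominates $f$. \emph{Lemma B:} for every $Z$-hyperimmune $f$, one can build a non-$Z$-computable set $B$ such that whenever $f$ fails to be $(Z \oplus Y)$-hyperimmune, $B \le_T Z \oplus Y$. Contraposition in Lemma A yields (4) $\Rightarrow$ (1) and (4) $\Rightarrow$ (3); contraposition in Lemma B yields (1) $\Rightarrow$ (4). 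For (1) $\Rightarrow$ (2), given $\omega$ non-$Z$-computable sets $B_s$, apply Lemma A to each to obtain $Z$-hyperimmune $f_s$ and combine via $f(n) = \sum_{s \le n} f_s(n)$, which is $Z$-hyperimmune because any pointwise-dominating $(Z \oplus Y)$-computable $g$ satisfies $g(n) \ge f_s(n)$ for all $n \ge s$ and hence, after a finite modification, pointwise dominates each $f_s$; then apply (4), already obtained from (1) via Lemma B, to $f$ to obtain a solution $Y$ preserving $f$, hence each $f_s$, hence each $B_s$.

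The main obstacle is Lemma A when $B$ has hyperimmune-free $Z$-degree: such $B$ compute no $Z$-hyperimmune function, so $f$ cannot be chosen $\le_T B \oplus Z$. The construction instead must exploit that any specific Turing reduction $\Phi^{Z \oplus Y} = B$ furnishes a $Y$-computable use function $u_\Phi^{Z \oplus Y}$, and arrange $f$ so that every such use function dominates $f$ uniformly across all $\Phi$ and $Y$ with $\Phi^{Z \oplus Y} = B$, rather than via a single $B \oplus Z$-computable function.
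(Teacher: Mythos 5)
The easy directions (2)$\Rightarrow$(1) and (3)$\Rightarrow$(1) are fine; your $B \oplus \overline{B}$ trick is a harmless variant of the paper's observation that either $B$ or $\overline{B}$ is not $\Sigma^0_1(Z)$. But the core of the proposal has two genuine gaps, both stemming from not exploiting that the oracle $Z$ is universally quantified in these definitions, so that you may (and must) replace $Z$ by $Z \oplus G$ for a carefully forced auxiliary set $G$. Your Lemmas A and B are actually false as stated precisely because they fix the reference oracle. For Lemma B, take $Z = \emptyset$ and $f = p_A$ for a $\Delta^1_1$-random set $A$: any $B$ admitting a modulus $\le f$ is hyperarithmetic (Groszek--Slaman) and satisfies $B \le_T f \le_T A$, but $\Delta^1_1$-randoms form a minimal pair with every nonzero hyperarithmetic degree, so the only such $B$ is computable. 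For Lemma A, take $Z = \emptyset$ and $B$ of hyperimmune-free degree: setting $Y = B$, your conclusion would produce a $B$-computable, hence computably dominated, function $h$ dominating $f$, so $f$ cannot be hyperimmune. You correctly flag the hyperimmune-free case as the obstacle, but the \qt{uniform use function} idea you sketch runs into exactly this counterexample. The paper's resolution (\Cref{lem:hyperimmune-is-modulus-of-non-zero}, \Cref{lem:hyperimmune-is-modulus}) is to force a $G$ making the relevant set $\Delta^0_2(G)$ while keeping it non-$(Z\oplus G)$-computable, so that its settling time is a $(Z \oplus G)$-hyperimmune modulus; the preservation hypothesis is then applied with base set $Z \oplus G$, not $Z$.

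The (1)$\Rightarrow$(2) step by summing $f(n) = \sum_{s\le n} f_s(n)$ is logically reversed even granting the lemmas. A $(Z\oplus Y)$-computable $g$ dominating $f$ does (after a finite modification) dominate each $f_s$, but that shows the survival of \emph{any one} $f_s$ suffices for the survival of $f$, not the converse: a solution $Y$ preserving the hyperimmunity of $f$ may destroy $f_1, f_2, \dots$ so long as $f_0$ survives, and then $B_1, B_2, \dots$ may be coded into $Y$. The paper handles (1)$\Rightarrow$(2) directly via \Cref{lem:one-to-omega-cone-avoidance-1} and \Cref{lem:one-to-omega-cone-avoidance-2}, forcing a $G$ such that $B \not\le_T Z\oplus G$ while $B \le_T Z\oplus G\oplus A_i$ for every $i$, collapsing the $\omega$ cones to a single cone over the new base. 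Finally, even a corrected c.e.\ version of your Lemma A (needed for (4)$\Rightarrow$(3)) is the hardest ingredient in the paper: it passes through semi-computable sets and the nonexistence of totally cototal enumeration degrees above $\mathbf{0}_e$ (\Cref{lem:non_ce_permitting_in_enumerations}, \Cref{cor:enumerate-semi-computable}, \Cref{cor:no-totally-cototal}, \Cref{lem:equiv-semi-computable}), none of which your sketch engages with.
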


The proof of \Cref{thm:avoiding-cones-equivalence} breaks into several parts, corresponding to subsections. In the first part, we prove the equivalence between avoiding 1 cone and avoiding $\omega$ cones. Then, we prove the equivalence between avoiding 1 cone and preserving 1 non-$\Sigma^0_1$ definition. Last, we prove the equivalence between avoiding 1 cone and preserving 1 hyperimmunity.

\subsection{Avoiding $\omega$ cones}

We start by proving that the notions of avoidance of 1 cone and of $\omega$ cones coincide. For this, we need to prove two lemmas which say that given a collection of non-zero Turing degrees $\mathbf{d}_0, \mathbf{d}_1, \dots$, one can always find a degree $\mathbf{e}$ relative to which these degrees collapse into a single non-zero degree. 

\begin{lemma}\label{lem:one-to-omega-cone-avoidance-1}
	Fix $Z$ and $B,A_0,A_1,A_2,\ldots \nleq_T Z$.
	Then there is $G$ such that $A_1,A_2,\ldots \nleq_T Z \oplus G$ but $A_0,A_1,\ldots \leq_T Z \oplus G \oplus B$. 	
\end{lemma}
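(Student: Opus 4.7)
The plan is to build $G$ by a Cohen-style forcing relative to $Z$, combined with a $B$-computable coding of $\bigoplus_i A_i$ into $G$ at sparse positions.

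For the coding side, fix a $B$-computable injection $h \colon \omega \to \omega$ with sparse range $R$, and stipulate the constraint that $G(h(n)) = (\bigoplus_i A_i)(n) = A_{(n)_0}((n)_1)$ for a fixed pairing of $\omega$. Then the Turing functional $\Gamma^{X \oplus B}(n) := X(h(n))$ applied to $Z \oplus G \oplus B$ computes $\bigoplus_i A_i$, so that $A_i \leq_T Z \oplus G \oplus B$ for every $i \geq 0$.

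For the preservation side, conditions are finite binary strings $\sigma$ respecting the coding at positions in $R \cap [0, |\sigma|)$. For each pair $(e, j)$ with $j \geq 1$, the density requirement says: there is a coding-consistent extension $\tau \succeq \sigma$ and some $n$ with $\Phi_e^{Z \oplus \tau}(n) \downarrow \neq A_j(n)$, or an extension forcing $\Phi_e^{Z \oplus G}(n)$ to diverge for some $n$. A generic $G$ meeting all these requirements has $\Phi_e^{Z \oplus G} \neq A_j$ for all $e$ and all $j \geq 1$, so $A_j \nleq_T Z \oplus G$.

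The main obstacle is the density lemma. If it fails at a condition $\sigma$ for $(e, j)$, then for every coding-consistent $\tau \succeq \sigma$ and every $n$ with $\Phi_e^{Z \oplus \tau}(n) \downarrow$, the value equals $A_j(n)$, and such convergent $\tau$ exist at every $n$. The natural route to a contradiction — searching for any convergent $\tau$ and outputting its value — needs to verify coding-consistency, which at first sight requires $B$ (for the positions in $R$) and $\bigoplus_i A_i$ (for the coded values). The trick I expect to invoke is to exploit the sparsity of $R$: if the first coding position $h(k^*) \geq |\sigma|$ is placed far enough ahead, every extension $\tau$ with $|\tau| < h(k^*)$ is automatically coding-consistent beyond $\sigma$, so the search over such short extensions becomes $Z \oplus \sigma$-effective and recovers $A_j$, contradicting $A_j \nleq_T Z$. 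Making this work requires managing $h$ dynamically during the construction so that after every forcing stage the next coding position absorbs the uses of the $\Phi_e$'s handled so far, while keeping $h$ sufficiently uniform in $B$ for the decoding functional $\Gamma$ to remain well-defined; this careful interleaving of sparsity and coding is where the main technical care lies.
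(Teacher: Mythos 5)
Your high-level plan --- Cohen-style forcing with a sparse coding of $\bigoplus_i A_i$ into $G$ --- has a genuine gap at exactly the point you flag as ``technical care,'' and the paper's proof resolves it with a different idea rather than by tuning your construction.

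The circularity is this. For the decoding functional $\Gamma$ to work, the coding positions $h(n)$ must be recoverable from $Z \oplus G \oplus B$. For the density argument to be $Z$-effective, the coding positions must be placed \emph{after} the extension handling the current requirement, so that the search for a forcing extension is unconstrained. But once you make $h$ dynamic, $h(i)$ is essentially $|\tau_i|$, and $\tau_i$ is chosen by a search of the form ``find $\tau \succeq \sigma_i$ and $n$ with $\Phi_e^{Z \oplus \tau}(n) \!\downarrow\, \neq A_j(n)$.'' That search is not $Z$-effective and not $Z\oplus B$-effective: it consults the full oracle $A_j$, which is precisely what the decoder is trying to compute. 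So the decoder cannot reconstruct $|\tau_i|$ (hence cannot locate the next coded bit) without already knowing the $A_j$'s. Nothing about sparsity, or the order in which you place $h$ relative to the forcing stages, dissolves this loop.

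The paper's proof breaks the loop with two coupled ideas that do not appear in your proposal. First, it replaces the $A_k$-dependent diagonalization search by an $A_k$-free one: at stage $i$ it asks for a pair $\rho_0, \rho_1 \succeq \sigma_i \concat 0^n \concat 1$ and $\ell$ with $\Phi_j^{Z\oplus\rho_0}(\ell) \neq \Phi_j^{Z\oplus\rho_1}(\ell)$. This is a $\Sigma^0_1(Z)$ predicate in $n$, giving a set $D$; one of the two $\rho$'s necessarily disagrees with $A_k(\ell)$, so the construction can diagonalize, while the decoder recovers the choice by simply checking which $\rho$ is an initial segment of $G$ --- no knowledge of $A_k$ needed. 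Second, it uses that $B$ is not $\Sigma^0_1(Z)$ (after replacing $B$ by $\overline B$ if necessary) to pick $n \in B \triangle D$, codes $n$ into $G$ as the block $0^n 1$ after $\sigma_i$, and lets $B$-membership of $n$ tell the decoder whether the ``stability'' or ``diagonalization'' case applied. The coded bit $A_j(k)$ is then the single bit right after $\tau_i$. In short, the construction does not reserve positions by a pre-fixed $B$-computable injection; it encodes the navigation data of the construction into the shape of $G$, and uses $B$ as the disambiguation oracle. Your outline does not contain either the $A_k$-free two-witness search or the $B \triangle D$ device, and without them the decoding step does not go through.
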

\begin{proof}
	We may assume that $B$ is not $\Sigma^0_1(Z)$. We build $G$ by finite extensions as the union of a sequence $\sigma_{-1} \subseteq \tau_{0} \subseteq \sigma_0 \subseteq \tau_1 \subseteq \sigma_1 \subseteq \tau_1 \subseteq \cdots$.
	
	Begin $\sigma_{-1} = \varnothing$. In general, let $\sigma_{\la j,k \ra} = \tau_{\la j,k \ra} \concat \la A_j(k) \ra$. Given $\sigma_i$, define $\tau_i$ as follows. Let $i = \la j,k \ra$. Define a $\Sigma^0_1(Z)$ set $D$ such that $n \in D$ if and only if there are $\rho_0,\rho_1 \succeq \sigma_i \concat 0^n \concat 1$ and $\ell$ with
	\[ \Phi_j^{Z \oplus \rho_0}(\ell) \neq \Phi_j^{Z \oplus \rho_1}(\ell).\]
	Now as $B$ is not $\Sigma^0_1(Z)$ there is $n \in B \triangle D$. If $n \in B - D$, then let $\tau_i = \sigma_i \concat 0^n \concat 1$. If $n \in D - B$, find the first witness $(\rho_0,\rho_1,\ell)$ and choose $\tau_i$ to be whichever $\rho$ has
	\[ \Phi_j^{Z \oplus \rho}(\ell) \neq A_k(\ell).\]
	This completes the construction of $G$.
	
	First we claim that $A_k \nleq_T Z \oplus G$. Indeed, suppose that $\Phi_j^{Z \oplus G} = A_k$. Let $i = \la j,k \ra$. Then, when defining $\tau_i$, we must have had $n \in B - D$, or we would have chosen $\tau_i$ such that for some $\ell$
	\[ \Phi_j^{Z \oplus \tau_i}(\ell) \neq A_k(\ell).\]
	So we set $\tau_i = \sigma_i \concat 0^n \concat 1$ and for all $\rho_0,\rho_1 \succeq \tau_i$ and $\ell$,
	\[ \Phi_j^{Z \oplus \rho_0}(\ell)\downarrow \text{ and } \Phi_j^{Z \oplus \rho_1}(\ell) \downarrow \qquad  \Longrightarrow  \qquad \Phi_j^{Z \oplus \rho_0}(\ell) = \Phi_j^{Z \oplus \rho_1}(\ell).\]
	Thus $Z \geq_T A_k$, a contradiction.
	
	Finally, we argue that for each $j$, $A_j \leq_T Z \oplus G \oplus B$. This is because $Z \oplus G \oplus B$ can reconstruct the sequence $\sigma_{-1} \subseteq \tau_0 \subseteq \sigma_0 \subseteq \tau_1 \subseteq \cdots$, and $\sigma_{\la j,k \ra} = \tau_{\la j,k \ra} \concat \la A_j(k) \ra$. Indeed, given $\tau_i$, $G$ can determine $\sigma_{i+1}$. Given $\sigma_i$, using $G$ we can find $n$ such that $\sigma_i \concat 0^n \concat 1 \prec G$. If $n \in B$, then $\tau_i = \sigma_i \concat 0^n \concat 1$. If $n \notin B$, let $i = \la j,k \ra$ and search for the first $\rho_0,\rho_1 \succeq \sigma_i$ and $\ell$ with
	\[ \Phi_j^{Z \oplus \rho_0}(\ell) \neq \Phi_j^{Z \oplus \rho_1}(\ell).\]
	Then $\tau_i$ is whichever $\rho$ has $\rho \prec G$.
\end{proof}

\begin{lemma}\label{lem:one-to-omega-cone-avoidance-2}
	Fix $Z$ and $B, A_0,A_1,A_2,\ldots \nleq_T Z$.
	Then there is $G$ such that $B \nleq_T Z \oplus G$ but, for each $i$, $B \leq_T Z \oplus G \oplus A_i$. 	
\end{lemma}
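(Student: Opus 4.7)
The plan is to mirror the construction of \Cref{lem:one-to-omega-cone-avoidance-1}, swapping the roles of $B$ and the $A_i$'s: instead of encoding the $A_j$'s into $G$ so they become decodable via $B$, we encode $B$ into $G$ so it becomes
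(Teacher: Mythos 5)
The plan you sketch runs into a fundamental asymmetry that the one-pass construction of Lemma~\ref{lem:one-to-omega-cone-avoidance-1} does not survive when its roles are simply swapped. In that proof there are infinitely many targets $A_j$ to encode, one bit per stage, but a \emph{single} set $B$ serves as the decoding key at every stage: after locating the block of $0$'s marking a stage, one consults membership in $B$ to tell which case the construction took, and this works at every stage precisely because $B$ is one fixed oracle. In the statement you are now proving the situation is reversed: there is a single $B$ to encode, but \emph{each} $A_i$ individually must be able to decode it. If you mimic the construction, at stage $i$ you would have to designate some particular $A_{j(i)}$ (more precisely, a non-$\Sigma^0_1(Z)$ set derived from it) to play the role of the key when choosing the extension. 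But then only $Z\oplus G\oplus A_{j(i)}$ can recover what happened at that stage; a different oracle $A_{i'}$ gives no information about membership in $A_{j(i)}$, since the $A_i$'s need have no nontrivial common information relative to $Z$ (they could pairwise form minimal pairs over $Z$). So the required decoding $B\le_T Z\oplus G\oplus A_i$ for \emph{every} $i$ does not follow: there is no single ``master key'' simultaneously computable from each $A_i$ that could drive a stage-by-stage encoding.

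The paper's proof circumvents this by invoking Lemma~\ref{lem:one-to-omega-cone-avoidance-1} as a black box once per $A_n$, inductively producing columns $G_0,G_1,\ldots$ with $B,A_{n+1},A_{n+2},\ldots\nleq_T Z\oplus G_0\oplus\cdots\oplus G_n$ and $B\le_T Z\oplus G_0\oplus\cdots\oplus G_n\oplus A_n$, and then forcing a finite perturbation $H=\bigoplus_n H_n$ with $H_n=^*G_n$ so that $B\nleq_T Z\oplus H$. Because each column is changed only finitely, every decoding $B\le_T Z\oplus H\oplus A_i$ survives. To repair your proposal you would need some analogous device that gives each $A_i$ its own private decoding channel, rather than reusing one encoding scheme across all the $A_i$'s at once.
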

\begin{proof}
	Using \Cref{lem:one-to-omega-cone-avoidance-1} we can inductively choose $G_0,G_1,G_2,\ldots$ such that for each $n$, $B,A_{n+1},A_{n+2},\ldots \nleq_T Z \oplus G_0 \oplus \cdots \oplus G_n$ but $B \leq_T Z \oplus G_0 \oplus \cdots \oplus G_n \oplus A_n$.
	
	We will define $H = \bigoplus H_n$ where $H_n =^* G_n$. We want to have that $B \nleq_T Z \oplus H$; since $H_n =^* G_n$, it will be automatic that for each $i$, $B \leq_T Z \oplus H \oplus A_i$. We define $H$ by forcing; our conditions are of the form $H_0 \oplus \cdots \oplus H_\ell$ where $H_n =^* G_n$. We argue that given a Turing reduction $\Phi$ and a condition $H_0 \oplus \cdots \oplus H_\ell$, there is an extension $H_0 \oplus \cdots \oplus H_k$ such that we either force that $\Phi^{Z \oplus H}$ is partial or that $\Phi^{Z \oplus H} \neq B$. If there are $x$, $k$, $\sigma_{\ell+1},\ldots,\sigma_k$ such that
	\[ \Phi^{Z \oplus H_0 \oplus \cdots \oplus H_\ell \oplus \sigma_{\ell+1} \oplus \cdots \oplus \sigma_k}(x) \downarrow \neq B(x) \]
	then we can find a condition extending $H_0 \oplus \cdots \oplus H_\ell$ which forces that $\Phi^{Z \oplus H} \neq B$. Otherwise, suppose that for each $x$ there are $\sigma_{\ell+1},\ldots,\sigma_k$ such that
	\[ \Phi^{Z \oplus H_0 \oplus \cdots \oplus H_\ell \oplus \sigma_{\ell+1} \oplus \cdots \oplus \sigma_k}(x) \downarrow. \]
	Then $B \leq_T Z \oplus H_0 \oplus \cdots \oplus H_\ell$, a contradiction. So there must be some $x$ such that for all $\sigma_{\ell+1},\ldots,\sigma_k$,
	\[ \Phi^{Z \oplus H_0 \oplus \cdots \oplus H_\ell \oplus \sigma_{\ell+1} \oplus \cdots \oplus \sigma_k}(x) \uparrow. \]
	Then $H_0 \oplus \cdots \oplus H_\ell$ already forces that $\Phi^{Z \oplus H}(x)$ does not converge.
\end{proof}

\begin{corollary}
Avoidance of 1 cone implies avoidance of $\omega$ cones.
\end{corollary}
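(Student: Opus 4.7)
The plan is to reduce avoidance of $\omega$ cones to avoidance of $1$ cone by using \Cref{lem:one-to-omega-cone-avoidance-2} to collapse the countable family of cones into a single cone.

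Suppose $\Psf$ admits avoidance of $1$ cone. Fix $Z$, a countable family $\{B_s : s<\omega\}$ of non-$Z$-computable sets, and a $\Psf$-instance $X\le_T Z$. I would apply \Cref{lem:one-to-omega-cone-avoidance-2} taking $B = B_0$ and $A_i = B_i$ (all non-$Z$-computable by hypothesis). This yields a set $G$ such that
\[ B_0 \nleq_T Z\oplus G \qquad\text{but}\qquad B_0 \le_T Z\oplus G\oplus B_i \text{ for every } i<\omega. \]
Now set $Z' = Z\oplus G$. Then $X\le_T Z\le_T Z'$ is still a $\Psf$-instance relative to the new oracle, and $B_0$ remains non-$Z'$-computable.

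Since $\Psf$ admits avoidance of $1$ cone, there is a $\Psf$-solution $Y$ to $X$ such that $B_0\nleq_T Z'\oplus Y = Z\oplus G\oplus Y$. I claim $Y$ avoids every $B_i$ above $Z$. Indeed, if $B_i \le_T Z\oplus Y$ for some $i$, then by transitivity
\[ B_0 \le_T Z\oplus G\oplus B_i \le_T Z\oplus G\oplus Y, \]
contradicting the choice of $Y$. Hence $B_i\nleq_T Z\oplus Y$ for all $i<\omega$, which is exactly avoidance of $\omega$ cones.

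There is no real obstacle here; the content of the corollary has already been absorbed into \Cref{lem:one-to-omega-cone-avoidance-2}, whose proof carries the genuine combinatorial work (the finite-extension construction that pushes the Turing degrees of the $A_i$ above a common cone while keeping $B$ off that cone). The corollary is essentially a one-line deduction: use the lemma to turn \qt{$B_i$ is non-computable} into \qt{$B_i$ computes a fixed $B$}, then apply single-cone avoidance to that $B$.
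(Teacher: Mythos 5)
Your proof is correct and takes essentially the same route as the paper: apply \Cref{lem:one-to-omega-cone-avoidance-2} to collapse the countable family into a single cone over the enlarged oracle $Z\oplus G$, then invoke single-cone avoidance and use transitivity. The only difference is that you make explicit the relabeling $Z' = Z\oplus G$ and check the hypotheses of single-cone avoidance for that oracle, which the paper leaves tacit.
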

\begin{proof}
Let $\Psf$ be a problem admitting avoidance of 1 cone.
Fix some set $Z$, non-$Z$-computable sets $A_0, A_1, \dots$ and a $Z$-computable instance $X$ of $\Psf$. By \Cref{lem:one-to-omega-cone-avoidance-2}, letting $B = A_0$, there is a set $G$ such that $B \nleq_T Z \oplus G$ but, for each $i$, $B \leq_T Z \oplus G \oplus A_i$. Since $\Psf$ admits avoidance of 1 cone, there is a $\Psf$-solution $Y$ to $X$ such that $B \nleq_T Z \oplus G \oplus Y$. We claim that for every $i \in \omega$, $A_i \nleq_T Z \oplus Y$. Indeed, otherwise, $B \leq_T Z \oplus G \oplus A_i$, but then $B \leq_T Z \oplus G \oplus Y$, contradiction.
\end{proof}

However, when considering non-relativized versions of cone avoidance, avoiding 2 cones is strictly stronger than avoiding 1 cone. We call \emph{unrelativized} a notion for which the ground set $Z$ is $\emptyset$. A pair of Turing degrees $\mathbf{a}, \mathbf{b}$ is \emph{minimal} if they are both non-zero, $\mathbf{0}$ is the only degree below both of them.

\begin{theorem}
There is a problem which admits non-relativized avoidance of 1 cone, but not of 2 cones.
\end{theorem}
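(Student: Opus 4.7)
The plan is to construct the separating problem directly from a minimal pair. Recall that by the classical Spector/Kleene--Post result there exist non-computable sets $A_0,A_1$ forming a \emph{minimal pair}, meaning that if $C \le_T A_0$ and $C \le_T A_1$ then $C$ is computable. Define a problem $\Psf$ whose unique instance is $\emptyset$ (hence computable) and whose solution set is exactly $\{A_0,A_1\}$. Since problems in this paper are only required to be multivalued functions from instances to solutions coded by reals, this is a legitimate problem.

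To verify unrelativized avoidance of $1$ cone, fix any non-computable set $B$. By the minimal pair property, $B$ cannot be Turing reducible to both $A_0$ and $A_1$, so at least one of the two solutions $Y \in \{A_0,A_1\}$ satisfies $B \not\le_T Y$; this $Y$ witnesses avoidance of the cone above $B$ for the (unique, computable) instance $\emptyset$.

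To see that $\Psf$ fails to admit unrelativized avoidance of $2$ cones, take the two non-computable sets to be $A_0$ and $A_1$ themselves. Any solution $Y$ to the instance $\emptyset$ is either $A_0$, in which case $A_0 \le_T Y$, or $A_1$, in which case $A_1 \le_T Y$. Thus no solution simultaneously avoids the cones above $A_0$ and above $A_1$.

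There is essentially no technical obstacle here; the only thing one needs to be slightly careful about is confirming that the paper's framework permits a problem with a fixed finite (here, two-element) solution set that is not computably presented, which is fine since the definitions in Section~1 place no uniformity or definability restriction on the instance--solution relation. A small variant, if one prefers a problem whose instances form an infinite class, is to let $\Psf$ have every computable real as an instance and assign $\{A_0,A_1\}$ as the solution set to each; the same verification goes through verbatim.
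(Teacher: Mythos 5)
Your proof is correct and is essentially the same as the paper's: both take a minimal pair $A_0,A_1$, let the problem have the single instance $\emptyset$ with solution set $\{A_0,A_1\}$, and verify the two claims exactly as you do. No substantive difference in approach.
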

\begin{proof}
Fix two sets $A$ and $B$ whose Turing degrees form a minimal pair. Let $\Psf$ be the problem with unique instance $\emptyset$. A solution is either of $A$ or $B$. $\Psf$ does not admit non-relativized avoidance of 2 cones, as witnessed by taking the cones $A$ and $B$. On the other hand, $\Psf$ admits non-relativized avoidance of 1 cone. Indeed, let $C$ be a non-computable set, and consider the unique instance $\emptyset$ of $\Psf$. By minimality of the pair of degrees of $A$ and $B$, either $A \not \geq_T C$ or $B \not \geq_T C$. In either case, there is a $\Psf$-solution $Y \in \{A,B\}$ to $\emptyset$ such that $C \not \leq_T Y$.
\end{proof}

The equivalence between the two relativized notions show in particular that there is no pair of Turing degrees which is minimal relative to every degree which lies above neither of them.

\subsection{Preserving 1 hyperimmunity}

We now prove that preserving 1 cone is equivalent to preserving 1 hyperimmunity.
The forward implication is relatively simple.

\begin{lemma}\label{lem:hyperimmune-is-modulus-of-non-zero}
Fix a set $Z$ and a nondecreasing $Z$-hyperimmune function $f : \omega \to \omega$. 
There is a set $G$ and a $\Delta^0_2(G)$ set $A \not \leq_T Z \oplus G$ such that $f$ is a $G$-modulus for $A$.
\end{lemma}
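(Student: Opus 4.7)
The plan is to build $G$ column by column, viewing $G = \bigoplus_n G^{[n]}$ under the standard Cantor pairing $\langle n, s\rangle$. Each column $G^{[n]}$ will be either identically $0$ or of the form $0^{T_n}v_n^{\infty}$ with $v_n \in \{0,1\}$ and $T_n \leq f(n)$. I then define $A(n) := \lim_s G^{[n]}(s)$, so $A(n) = v_n$ if column $n$ is committed and $A(n) = 0$ otherwise, together with the $G$-computable approximation $A_s(n) := G^{[n]}(s)$. This approximation stabilises by stage $T_n \leq f(n)$, whence $A \in \Delta^0_2(G)$ and, for any $g \geq f$, $A(n) = A_{g(n)}(n)$ is $G \oplus g$-computable. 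Hence $f$ is a $G$-modulus for $A$.

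The substantive content is the diagonalisation against $R_e : \Phi_e^{Z\oplus G} \neq A$. At stage $e$, let $\sigma_e$ encode the columns committed so far and consider the partial $Z$-computable function
\[
h_e(n) = \min\{s : \exists \tau \succeq \sigma_e,\ |\tau| \leq s,\ \Phi_{e,s}^{Z\oplus\tau}(n)\downarrow \text{ with use }\leq s\}.
\]
Maintain a running bound $B_e := \max_{e' < e} h_{e'}(n_{e'})$, where $n_{e'}$ is the witness chosen at stage $e'$ (or $0$ if none). If $h_e(n) = \infty$ for some $n$ larger than every previous witness and than $B_e$, then no extension of $\sigma_e$ makes $\Phi_e^{Z\oplus G}(n)$ converge, so $R_e$ is satisfied vacuously: leave $\sigma_{e+1} := \sigma_e$ and commit no new column. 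Otherwise, $h_e$ is total and $Z$-computable, and by $Z$-hyperimmunity of $f$ I find a fresh $n_e > B_e$ with $f(n_e) > h_e(n_e)$. Fix $\tau \succeq \sigma_e$ witnessing $h_e(n_e)$, so $\Phi_e^{Z\oplus\tau}(n_e)\downarrow = k$ with use at most $h_e(n_e)$. Set $T_{n_e} := f(n_e)$ and $v_{n_e} := 1 - k$, commit column $n_e$ to the pattern $0^{T_{n_e}}v_{n_e}^{\infty}$, and let $\sigma_{e+1}$ encode the new state.

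The key inequality is $\langle n, s\rangle \geq s$ under Cantor pairing, which ensures that the first value-bit of column $n_e$ lies at position $\langle n_e, T_{n_e}\rangle \geq T_{n_e} = f(n_e) > h_e(n_e) \geq $ the use of the computation at stage $e$. Hence the output $k$ is invariant under the choice of $v_{n_e}$, so $\Phi_e^{Z\oplus G}(n_e) = k \neq 1 - k = A(n_e)$ in the final $G$, satisfying $R_e$. The freshness requirement $n_{e'} > B_{e'} \geq h_e(n_e)$ for every future $e' > e$, combined again with $\langle m, s\rangle \geq m$, guarantees that future column commitments lie beyond the use at stage $e$ and never perturb the computation witnessing $R_e$. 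Therefore every $R_e$ is permanently met and $A \not\leq_T Z \oplus G$. The only real obstacle is packaging the two constraints on $n_e$ (freshness past the finite cutoff $B_e$ and the inequality $f(n_e) > h_e(n_e)$) into a single application of $Z$-hyperimmunity; since $B_e$ is finite and $h_e$ is $Z$-computable, the function $n \mapsto \max(h_e(n), B_e + 1)$ is $Z$-computable and $f$ exceeds it at infinitely many $n$, so such $n_e$ is found.
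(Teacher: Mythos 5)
There is a gap in the step where you conclude $\Phi_e^{Z\oplus G}(n_e)=k$ from $\Phi_e^{Z\oplus\tau}(n_e)\downarrow=k$. The search defining $h_e$ ranges over arbitrary $\tau \succeq \sigma_e$, so the $\tau$ you fix may take the value $1$ at positions $\langle m, s\rangle$ in columns $m$ that are uncommitted in $\sigma_e$ and never subsequently committed; in the final $G$ all such positions are $0$. Nothing in the construction guarantees that these positions lie beyond the use of the computation $\Phi_e^{Z\oplus\tau}(n_e)$, and if they do not, then $G$ and $\tau$ disagree inside the use, so the computation (and its output $k$) need not persist in $G$. Your observation that $\langle n,s\rangle \geq s$ controls only the positions introduced by the \emph{new} commitment of column $n_e$ and by future commitments; it says nothing about what $\tau$ does in the intervening uncommitted columns inside the use.

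Patching this by restricting the search to those $\tau$ whose uncommitted columns are all $0$ destroys the other half of the argument: then $h_e(n)=\infty$ only says that $\Phi_e$ diverges on the one canonical extension of $\sigma_e$ (committed columns filled in, else $0$), but the final $G$ will generally differ from that extension because future stages commit new columns, so the vacuous satisfaction of $R_e$ is no longer forced. You are thus caught between a search space that is too large (the $\tau$ found need not be compatible with $G$) and one that is too small (partiality of $h_e$ no longer forces divergence). The paper resolves this tension by adopting a more flexible notion of condition --- a finite partial function $\sigma\colon\omega^2\to 2$ subject only to the constraint that any two differing values in column $x$ occur at some index $\leq f(x)$ --- restricting the search only on columns $\leq n$, and then using that $f$ is nondecreasing to check that every $\tau$ found is itself a legitimate condition (its alternations in columns $> n$ occur before $h(n) < f(n) \leq f(x)$), so that $G$ can be taken to extend $\tau$. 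Your rigid column shape $0^{T_n}v_n^{\infty}$ cannot accommodate the arbitrary finite behaviour that the found $\tau$ may exhibit, and this is where the argument breaks.
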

\begin{proof}
We construct $G$ which will be a $\Delta^0_2$-approximation of $A$, with $f$ a $G$-modulus for $A$. More precisely, 
\[
(\forall x)(\forall y > f(x))\, G(x, y) = G(x, f(x)) = A(x)
\]

It is now clear that for any $h$ dominating $f$, $A \leq_T G \oplus h$. It remains only to show that $A \not \leq_T Z\oplus G$.
We will construct our set $G$ by forcing. A condition is a partial function $\sigma: \omega^2 \to 2$ with finite domain. 
The function $\sigma$ is a stem for the $\Delta^0_2$ approximation $G : \omega^2 \to 2$. 
We moreover require that there can only be $(x, y), (x, z) \in \dom(\sigma)$ with $\sigma(x, z) \neq \sigma(x, y)$ if $f(x) \geq \min (y, z)$. This ensures that $f$ is a modulus for the convergence of $G$.

A condition $\tau$ \emph{extends} $\sigma$ (written $\tau \leq \sigma$) if $\tau \supseteq \sigma$. 
Every sufficiently generic filter yields $G$ such that $G$ is a stable function whose limit is $A \coloneqq \lim_s G(\cdot, s)$. We now prove that the set of conditions forcing $\Phi^{G \oplus Z}_e \neq A$ is dense.

Fix a condition $\sigma$. 
For each $x \le n$, let $s_x$ be largest with $(x, s_x) \in \dom(\sigma)$, if this exists, and $s_x = 0$ otherwise.  For every $n$, we define $h(n)$ by a $Z$-computable search.  We search for $\tau \supseteq \sigma$ such that:
\begin{itemize}
\item For all $x \le n$ and all $t, r \ge s_x$ with $(x,t), (x,r) \in \dom(\tau)$, $\tau(x,t) = \tau(x,r)$; and
\item $\Phi^{\tau \oplus Z}(n)\downarrow$.
\end{itemize}
We define $h(n) = |\tau|$ for the first such $\tau$ found, and $h(n)\uparrow$ if there is no such $\tau$.  Note that we are not restricting our search to conditions, as that would not be a $Z$-computable search. We have two cases.

Case 1: $h(n)\uparrow$ for some $n \in \omega$. 
Then let $\mu \prec \sigma$ be a condition such that for all $x \le n$ and all $t$ with $s_x < t \le f(x)$, $(x,t) \in \dom(\mu)$ and $\mu(x,t) = \sigma(x,s_x)$ if $(x,s_x) \in \dom(\sigma)$, and $\mu(x,t) = 0$ otherwise.  This condition forces $\Phi^{G\oplus Z}_e(n)\uparrow$.

Case 2: the function $h$ is total $Z$-computable. Since $f$ is $Z$-hyperimmune, there is some $n > |\sigma|$ such that $f(n) > h(n)$. Let $\tau$ witness that $h(n)\downarrow = |\tau|$. Let $\hat{\tau} \supset \tau$ be obtained by defining $\hat{\tau}(n, s) = 1-\Phi^{\tau \oplus Z}(n)$ for all $s$ with $h(n) < s \le f(n)$. Note that $\tau$ has no alternations in the columns $x < n$ that weren't present in $\sigma$, and any alternations in a column $x \ge n$ occur before $|\tau| = h(n) < f(n) \le f(x)$.  There is possibly one more alteration in $\hat{\tau}$, in column $n$, but by construction this occurs before $f(n)$.  So $\hat{\tau}$ is a valid condition extending $\sigma$.  Moreover, $\hat{\tau}$ forces $\Phi^{G \oplus Z}_e(n)\downarrow \neq A$\end{proof}

\begin{corollary}
Avoidance of 1 cone implies preservation of 1 hyperimmunity.
\end{corollary}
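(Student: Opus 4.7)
The plan is to combine the preceding lemma (\Cref{lem:hyperimmune-is-modulus-of-non-zero}) with the hypothesis of avoidance of 1 cone. Fix a set $Z$, a $Z$-hyperimmune function $f$, and a $Z$-computable $\Psf$-instance $X$; I want a $\Psf$-solution $Y$ such that $f$ is still $(Z \oplus Y)$-hyperimmune.

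First I would reduce to the case that $f$ is nondecreasing, since \Cref{lem:hyperimmune-is-modulus-of-non-zero} has this as a hypothesis. Replacing $f$ by $\hat f(n) = \max_{i \le n} f(i)$ preserves hyperimmunity relative to any oracle $X$: on the one hand $\hat f \ge f$ pointwise, so any $X$-computable function dominating $\hat f$ also dominates $f$; on the other hand, if some $X$-computable $g$ dominates $f$ past some threshold $N$, then $n \mapsto C + \max_{N \le i \le n} g(i)$, with $C = \max_{i < N} f(i)$, is $X$-computable and dominates $\hat f$. Thus $f$ is $X$-hyperimmune iff $\hat f$ is, and I may assume $f$ itself is nondecreasing.

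Next, apply \Cref{lem:hyperimmune-is-modulus-of-non-zero} to $Z$ and $f$ to produce a set $G$ and a set $A \not\le_T Z \oplus G$ for which $f$ is a $G$-modulus of $A$, meaning that every function dominating $f$ computes $A$ together with $G$. Now invoke the hypothesis that $\Psf$ admits avoidance of 1 cone, taking the ground oracle to be $Z \oplus G$ (which still computes the instance $X$) and the cone to avoid to be $A$ (which is non-$(Z\oplus G)$-computable). This yields a $\Psf$-solution $Y$ with $A \not\le_T Z \oplus G \oplus Y$.

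Finally, I would check that $Y$ works. If $f$ were not $(Z \oplus Y)$-hyperimmune, some function $h \le_T Z \oplus Y$ would dominate $f$; then the modulus property gives $A \le_T G \oplus h \le_T Z \oplus G \oplus Y$, contradicting the choice of $Y$. Hence $f$ is $(Z \oplus Y)$-hyperimmune, which is exactly preservation of 1 hyperimmunity. I do not anticipate any genuine obstacle here: the substantive content is entirely in the construction of the modulus in \Cref{lem:hyperimmune-is-modulus-of-non-zero}, and the corollary itself is essentially a one-step application of avoidance of 1 cone.
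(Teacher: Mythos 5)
Your proposal is correct and takes essentially the same route as the paper: apply \Cref{lem:hyperimmune-is-modulus-of-non-zero} to obtain $G$ and a set $A \not\le_T Z \oplus G$ for which $f$ is a modulus, invoke avoidance of 1 cone over the ground oracle $Z \oplus G$ to avoid the cone above $A$, and conclude. Your explicit reduction to the nondecreasing case is a sound preliminary step that the paper elides silently.
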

\begin{proof}
Suppose a problem $\Psf$ admits avoidance of 1 cone.
Fix a set $Z$, a $Z$-hyperimmune function $f : \omega \to \omega$ and a $\Psf$-instance $X \leq_T Z$. By \cref{lem:hyperimmune-is-modulus-of-non-zero}, there is a set $G$ and a $\Delta^0_2(G)$ set $A \not \leq_T Z \oplus G$ such that $f$ is a modulus for $A$. Since $\Psf$ admits avoidance of 1 cone, then there is a $\Psf$-solution $Y$ to $X$ such that $A \not \leq_T Z \oplus G \oplus Y$. If $f$ is not $Z \oplus Y$-hyperimmune, then $Z \oplus Y$ computes a function $h$ dominating $f$, and since $f$ is a $Z \oplus G$-modulus for $A$, $Z \oplus G \oplus Y$ computes $A$, contradiction.
\end{proof}

\begin{lemma}[Patey~\cite{PateyCombinatorial}]\label{lem:making-a-set-delta2}
For every set $Z$, every closed set $\Ccal \subseteq \omega^\omega$
with no $Z$-computable member, and every set $A$, there is a set $G$ such that $\Ccal$ has no $Z \oplus G$-computable member and $A$ is $\Delta^0_2(G)$.
\end{lemma}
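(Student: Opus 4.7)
The plan is to build $G$ by forcing with conditions that explicitly encode $A$ into the tails of the columns of $G$, viewed as a function $\omega \times \omega \to 2$, while leaving enough freedom at the ``initial portion'' of each column to diagonalize against every Turing functional $\Phi_e$ computing a member of $\Ccal$. Let $T \subseteq \omega^{<\omega}$ be a $Z$-computable tree with $[T] = \Ccal$.

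Conditions are pairs $(\sigma, n)$, where $n \in \omega$ is a ``committed column count'' and $\sigma$ is a finite partial function $\omega \times \omega \to 2$ satisfying $\sigma(x,s) = A(x)$ whenever $(x,s) \in \dom(\sigma)$, $x < n$, and $s \geq n$. Extension is defined by $(\tau, m) \leq (\sigma, n)$ iff $\tau \supseteq \sigma$ and $m \geq n$. The filter yields $G: \omega \times \omega \to 2$, and a standard density argument (one can always extend by enlarging $n$ and padding with $A$-values) makes the committed counts unbounded. Hence $\lim_s G(x, s) = A(x)$ for every $x$, so $A \in \Delta^0_2(G)$.

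For the avoidance requirement, it suffices to show, for every Turing functional $\Phi_e$, the density of conditions forcing either ``$\Phi_e^{Z \oplus G}$ has some initial segment outside $T$'' or ``$\Phi_e^{Z \oplus G}$ is partial.'' Given a condition $(\sigma, n)$, call a finite $\tau \supseteq \sigma$ \emph{compatible} if $\tau(x,s) = A(x)$ for every $(x, s) \in \dom(\tau)$ with $x < n$ and $s \geq n$; since compatibility depends only on the finite string $A \uhr n$, the set of compatible $\tau$ is $\Sigma^0_1(Z)$. We split into three cases. \emph{Case 1:} there exist a compatible $\tau$ and some $\alpha \in \omega^{<\omega} \setminus T$ with $\Phi_e^{Z \oplus \tau} \succeq \alpha$; then $(\tau, n)$ is a valid extension forcing $\Phi_e^{Z \oplus G} \succeq \alpha \notin T$, so $\Phi_e^{Z \oplus G} \notin [T] = \Ccal$. \emph{Case 2:} there exist a compatible $\tau$ and some $x$ such that no compatible $\tau' \supseteq \tau$ has $\Phi_e^{Z \oplus \tau'}(x)\downarrow$; then $(\tau, n)$ forces $\Phi_e^{Z \oplus G}(x)\uparrow$. \emph{Case 3:} neither of the above occurs. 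Then, starting from $\sigma_0 := \sigma$, we $Z$-computably produce a chain $\sigma_0 \subseteq \sigma_1 \subseteq \cdots$ of compatible finite functions with $\Phi_e^{Z \oplus \sigma_{i+1}}(i)\downarrow$; by the failure of Case 1, every initial segment of the resulting function $f(i) := \Phi_e^{Z \oplus \sigma_{i+1}}(i)$ lies in $T$, so $f$ is a $Z$-computable member of $\Ccal$, contradicting the hypothesis.

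The main obstacle is verifying Case 3, where we must ensure that the greedy $Z$-computable search for compatible extensions actually traces out a path through $T$. The critical point is that $A$ enters only through the finite string $A \uhr n$, so the compatibility relation (and hence the search) is $Z$-computable. The simultaneous failure of Cases 1 and 2 is then precisely the combinatorial statement that lets the greedy procedure keep extending while staying inside $T$, producing the contradicting member of $\Ccal$.
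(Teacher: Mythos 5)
Your forcing is set up the same way as the paper's -- conditions $(\sigma,n)$ with $n$ a committed-column count, and a density argument that searches $Z$-computably for extensions making $\Phi_e^{Z\oplus G}$ converge, obtaining a $Z$-computable member of $\Ccal$ if the search never stalls. Cases 1--3 are in substance the paper's two-case argument. However, the way you have stated the condition/extension relation does not actually force $\lim_s G(x,s)=A(x)$, and this is a genuine gap.

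The problem is that your constraint \emph{on the condition itself} (``$\sigma(x,s)=A(x)$ whenever $(x,s)\in\dom(\sigma)$, $x<n$, and $s\ge n$'') combined with extension defined merely as $\tau\supseteq\sigma$ and $m\ge n$ leaves the region $\{(x,s): x<n,\ n\le s<m\}$ unconstrained on new domain points. Concretely, take the chain $(\sigma_0,1)\geq(\sigma_1,2)\geq(\sigma_2,3)\geq\cdots$ where $\dom(\sigma_i)=\{(0,s):s\le i\}$ and $\sigma_i(0,i)=1-A(0)$. Each $(\sigma_i,i+1)$ is a valid condition by your definition (the constraint is vacuous, since every $(0,s)\in\dom(\sigma_i)$ has $s\le i < i+1$), and each step is a valid extension; yet the resulting $G$ has $G(0,s)=1-A(0)$ for all $s$, so the limit is wrong. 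For the same reason, your Case 2 does not actually yield a \emph{forcing} condition: the quantifier ``no compatible $\tau'\supseteq\tau$'' ranges only over $\tau'$ respecting level $n$, but a further extension $(\tau',m')$ with $m'>n$ need not be compatible at level $n$, so convergence could still occur along the generic. The paper avoids both problems by placing the $A$-compatibility constraint on the \emph{extension relation} rather than on conditions: $(\tau,m)\le(\sigma,n)$ requires $\tau(x,t)=A(x)$ for every $x<n$ and every new $(x,t)\in\dom(\tau)\setminus\dom(\sigma)$, with no restriction on $t$. With that change, once a column is committed, all future values in it equal $A(x)$, the limit claim holds for any sufficiently generic filter, your compatibility notion agrees with genuine extendibility, and the rest of your argument (including the $Z$-computability of the search via the hardcoded finite string $A\uhr{n}$) goes through.
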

\begin{proof}
Consider the notion of forcing whose conditions are pairs $(\sigma, n)$, where $\sigma$ is a partial function $\subseteq \omega^2 \to 2$ with finite support,  and $n \in \omega$. Informally, $\sigma$ is a stem of the $\Delta^0_2$ approximations $G : \omega^2 \to 2$ of $A$, and $n$ specifies that the $n$ first columns of $\sigma$ are already locked to $A \uh n$.
Accordingly, a condition $(\tau, m)$ \emph{extends} $(\sigma, n)$ (written $(\tau, m) \leq (\sigma, n)$) if $\tau \supseteq \sigma$, $m \geq n$, and for every $x < n$ and $t$ with $(x, t) \in \dom \tau\setminus \dom(\sigma)$, $\tau(x,t) = A(x)$. Any sufficiently generic filter yields a stable function whose limit we denote~$A$.

We now prove that the set of conditions forcing $\Phi_e^{G \oplus Z}$ not to be a member of $\Ccal$ is dense. Given a condition $(\sigma, n)$, define a $Z$-computable decreasing sequence of conditions $(\sigma, n) \geq (\tau_0, n) \geq (\tau_1, n) \geq \dots$ such that for every $i$, $\Phi_e^{\tau_i \oplus Z}(i)\downarrow$. We have two cases. In the first case, this sequence is finite, with some maximal element $(\tau_k, n)$. Then the condition $(\tau_k, n)$ is an extension of $(\sigma, n)$ forcing $\Phi^{G \oplus Z}_e(k+1)\uparrow$. In the second case, the sequence is infinite. Since $\Ccal$ has no computable member, and by closure of $\Ccal$, there must be some $k \in \omega$ such that $\Phi^{\tau_k \oplus Z}_e \uh k \downarrow = \rho$ for some $\rho \in \omega^{<\omega}$ such that $\Ccal \cap [\rho] = \emptyset$. Again, the condition $(\tau_k, n)$ is an extension of $(\sigma, n)$ forcing $\Phi^{G \oplus Z}_e$ not to be a member of $\Ccal$. This completes the proof of the lemma.
\end{proof}

\begin{lemma}\label{lem:hyperimmune-is-modulus}
	Fix a set $Z$ and $C \nleq_T Z$. There is a set $G$ and a function $f \colon \omega \to \omega$ such that $f$ is $G \oplus Z$-hyperimmune, but $Z \oplus G \oplus C$ computes a function dominating $f$.
\end{lemma}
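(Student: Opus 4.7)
My plan is to combine \Cref{lem:making-a-set-delta2} with the relativization of Martin's classical theorem that a $\Delta^0_2$ set of hyperimmune-free degree must be computable.

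First I apply \Cref{lem:making-a-set-delta2} with the given $Z$, the singleton closed set $\Ccal = \{C\} \subseteq \omega^\omega$, and $A = C$. The hypothesis $C \not\le_T Z$ is precisely the statement that $\Ccal$ has no $Z$-computable member, so the lemma produces $G$ with $C \not\le_T Z \oplus G$ and $C \in \Delta^0_2(G)$; in particular, $C \in \Delta^0_2(Z \oplus G)$.

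Next I invoke the following classical fact (Martin, relativized): if $A \le_T B'$ and the degree of $A \oplus B$ is $B$-hyperimmune-free, then $A \le_T B$. Applying it with $B = Z \oplus G$ and $A = C$: we have $C \le_T (Z \oplus G)'$ by $\Delta^0_2$-ness and $C \not\le_T Z \oplus G$ by construction, so by contraposition the degree of $C \oplus Z \oplus G$ is not $(Z \oplus G)$-hyperimmune-free. That is, there is a function $f \le_T Z \oplus G \oplus C$ that is not dominated by any $(Z \oplus G)$-computable function. This $f$ is then $(Z \oplus G)$-hyperimmune, and $Z \oplus G \oplus C$ trivially dominates $f$ by computing it outright, which gives both conclusions of the lemma.

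The main obstacle, should one prefer a self-contained argument avoiding the Martin--Miller theorem as a black box, is to build $G$ and $f$ jointly by a single forcing construction mirroring \Cref{lem:hyperimmune-is-modulus-of-non-zero}, but with the roles of the given hyperimmune function and the constructed non-computable set swapped: the forcing would simultaneously have to preserve $C \not\le_T Z \oplus G$, diagonalize against each potential $(Z \oplus G)$-computable dominating function for $f$, and keep $f$ computable from $Z \oplus G \oplus C$. The delicate point, as in \Cref{lem:making-a-set-delta2}, is running a $Z$-computable search over extensions even though the validity of conditions (commitment of columns to agree with $C$) is $C$-dependent; the trick from \Cref{lem:hyperimmune-is-modulus-of-non-zero} of relaxing validity during the search and then repairing the witnessing extension should transfer.
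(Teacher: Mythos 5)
Your proof is correct and follows the same strategy as the paper's: both first apply \Cref{lem:making-a-set-delta2} to the singleton closed set $\{C\}$ to produce $G$ with $C \in \Delta^0_2(Z\oplus G)$ but $C \nleq_T Z\oplus G$, and then extract an $f \le_T Z\oplus G\oplus C$ that is $(Z\oplus G)$-hyperimmune. You invoke the relativized Miller--Martin theorem for this second step, whereas the paper explicitly takes $f$ to be the modulus of a $\Delta^0_2(Z\oplus G)$ approximation of $C$ and cites Groszek--Slaman for the domination-implies-computation claim --- which is exactly the standard proof of Miller--Martin, so the two arguments are the same in substance.
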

\begin{proof}
	By Lemma \ref{lem:making-a-set-delta2} applies to $Z$ and the closed set $\{C\}$, there is $G$ such that $C$ is $\Delta^0_2(Z \oplus G)$ but $C \nleq_T Z \oplus G$. Fix a $\Delta^0_2$ approximation $C(x,s)$ for $C$ relative to $Z \oplus G$. Let $f \colon \omega \to \omega$ be the modulus for $C$ with respect to this approximation, i.e., $f(n)$ is the least $s \geq n$ such that for all $m \leq n$, $C(m,s) = C(m)$. Note that $Z \oplus G \oplus C \geq_T f$, and any function dominating $f$, together with $Z \oplus G$, computes $C$: given $g$ dominating $f$, compute $C(n)$ by finding $s^* \geq n$ such that for all $s$ with $s^* \leq s \leq g(s^*)$, $C(m,s) = C(m,s^*)$; then $C(n) = C(n,s^*)$ (see \cite{Groszek2007Moduli}).
	
	Then $f$ is $G \oplus Z$-hyperimmune, as any function dominating $f$ would together with $Z \oplus G$ compute $C$, and $C \nleq_T Z \oplus G$. But $Z \oplus G \oplus C$ computes a function dominating $f$, namely $f$ itself.
\end{proof}

\begin{corollary}
	Preservation of 1 hyperimmunity implies avoidance of 1 cone.
\end{corollary}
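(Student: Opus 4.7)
The plan is to combine \Cref{lem:hyperimmune-is-modulus} with the hypothesis of preservation of 1 hyperimmunity in a direct way. Suppose $\Psf$ admits preservation of 1 hyperimmunity. Fix a set $Z$, a non-$Z$-computable set $C$, and a $\Psf$-instance $X \leq_T Z$. The goal is to produce a $\Psf$-solution $Y$ to $X$ with $C \not\leq_T Z \oplus Y$.

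First, apply \Cref{lem:hyperimmune-is-modulus} to $Z$ and $C$ to obtain a set $G$ and a function $f$ such that $f$ is $Z \oplus G$-hyperimmune while $Z \oplus G \oplus C$ computes some function dominating $f$. Since $X \leq_T Z \leq_T Z \oplus G$, the set $X$ is a $(Z \oplus G)$-computable instance of $\Psf$. By preservation of 1 hyperimmunity applied to the ground set $Z \oplus G$, the hyperimmune function $f$, and the instance $X$, there is a $\Psf$-solution $Y$ to $X$ such that $f$ remains $(Z \oplus G) \oplus Y$-hyperimmune.

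Now suppose for contradiction that $C \leq_T Z \oplus Y$. Then $Z \oplus G \oplus Y \geq_T Z \oplus G \oplus C$, and by the defining property of $f$ the degree on the right computes a function dominating $f$. Hence $Z \oplus G \oplus Y$ dominates $f$, contradicting the $(Z \oplus G \oplus Y)$-hyperimmunity of $f$. Therefore $C \not\leq_T Z \oplus Y$, which is exactly what avoidance of 1 cone requires.

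There is no real obstacle here: all the technical work has already been absorbed into \Cref{lem:hyperimmune-is-modulus} (where $G$ was built so as to make $C$ a $\Delta^0_2(Z \oplus G)$ set whose modulus $f$ is hyperimmune relative to $Z \oplus G$ but tight enough that any dominating function recovers $C$ over $Z \oplus G$). The only point deserving a sentence of care in the write-up is the observation that $X \leq_T Z$ automatically gives $X \leq_T Z \oplus G$, so that the hypothesis of preservation of 1 hyperimmunity applies with $Z \oplus G$ in place of the original ground set.
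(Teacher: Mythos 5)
Your proof is correct and follows exactly the paper's route: apply \Cref{lem:hyperimmune-is-modulus} to obtain $G$ and $f$, invoke preservation of 1 hyperimmunity with ground set $Z \oplus G$, and observe that any $Y$ keeping $f$ hyperimmune over $Z \oplus G \oplus Y$ cannot compute $C$ over $Z$. Your write-up simply makes explicit the relativization to $Z \oplus G$ and the final contradiction, both of which the paper leaves compressed into one sentence.
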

\begin{proof}
	Suppose that $\Psf$ admits preservation of one hyperimmunity. Fix a set $Z$, a $C \nleq_T Z$, and a $Z$-computable instance $X$ of $\Psf$. By Lemma \ref{lem:hyperimmune-is-modulus}, there is a set $G$ and a function $f \colon \omega \to \omega$ such that $f$ is $G \oplus Z$-hyperimmune but $Z \oplus G \oplus C$ computes a function dominating $f$. Since $\Psf$ admits preservation of one hyperimmunity, there is a solution $Y$ to $X$ such that $f$ is $Z \oplus G \oplus Y$-hyperimmune. Then $C \nleq_T Z \oplus Y$.
\end{proof}

Here again, we can consider unrelativized versions of these notions of preservation, and prove that they do not coincide.

\begin{theorem}
There is a problem which admits non-relativized avoidance of 1 cone, but not non-relativized preservation of 1 hyperimmunity.
\end{theorem}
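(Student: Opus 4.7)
My plan mirrors the minimal-pair construction used in the preceding theorem, but replaces the minimal pair with one of \emph{high} degrees. First I fix a specific hyperimmune function that will witness the failure of preservation of 1 hyperimmunity: the modulus of convergence of $\emptyset'$, namely $f(n) = \min\{s : \emptyset'_s \uh n = \emptyset' \uh n\}$. This $f$ is $\Delta^0_2$ (directly computable from $\emptyset'$) and hyperimmune, since a computable function dominating $f$ would yield a computable procedure for $\emptyset'$.

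Next I invoke the classical theorem of Cooper that there exist c.e.\ sets $A$ and $B$ whose Turing degrees are high (so $A' \equiv_T B' \equiv_T \emptyset''$) and which form a minimal pair. I take $\Psf$ to be the problem whose unique instance is $\emptyset$ and whose solutions are exactly $A$ and $B$. For non-relativized avoidance of 1 cone the argument is identical to the preceding minimal-pair example: given any non-computable $C$, the minimal-pair property forces $C \not\le_T A$ or $C \not\le_T B$, so at least one of $A$ or $B$ is a solution avoiding the cone of $C$. For the failure of non-relativized preservation of 1 hyperimmunity, I apply Martin's characterization of highness: a degree is high if and only if it computes a function dominating every $\Delta^0_2$ function. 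Since $f$ is $\Delta^0_2$ and both $A$ and $B$ are high, each of $A$ and $B$ computes a function dominating $f$, so $f$ is neither $A$-hyperimmune nor $B$-hyperimmune, witnessing the failure of preservation against the unique (computable) instance $\emptyset$.

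The crux of the argument is to arrange for a \emph{single} hyperimmune function to be dominated by \emph{every} solution, while keeping the solutions computationally independent enough for cone avoidance; naively trying to have both members of a minimal pair \emph{compute} a common hyperimmune function fails because the common function would then be Turing-below both members and hence computable. Locating $f$ instead in $\Delta^0_2$ and then invoking highness via Martin's theorem lets us substitute domination for computation, making both requirements simultaneously satisfiable. No new priority construction is needed beyond the standard proof of the existence of a high c.e.\ minimal pair.
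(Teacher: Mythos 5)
Your argument has a genuine gap at the step invoking ``Martin's characterization of highness.'' Martin's high domination theorem says a degree $\mathbf{a}$ is high if and only if $\mathbf{a}$ computes a function dominating every \emph{computable} function, not every $\Delta^0_2$ function; the latter statement is simply false (no $\Delta^0_2$ degree, in particular not $\mathbf{0}'$ itself, can compute a function dominating all $\Delta^0_2$ functions, since such a function would have to dominate its own successor). Worse, with your specific choice of $f$ the plan is self-defeating: the settling function of $\emptyset'$ has the well-known property that \emph{any} $g$ dominating it computes $\emptyset'$ (for cofinitely many $n$ one has $\emptyset' \uh n = \emptyset'_{g(n)} \uh n$). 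So if $A$ and $B$ both computed functions dominating $f$, then $\emptyset' \le_T A$ and $\emptyset' \le_T B$, contradicting that $A$ and $B$ form a minimal pair. This is exactly the obstacle you correctly flag in your final paragraph for the case where both members compute a common function; replacing ``compute $f$'' with ``dominate $f$'' does not escape it so long as $f$ is a modulus for a non-computable set.

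The overall plan is salvageable by choosing $f$ differently. By the correct form of Martin's theorem, the high sets $A$ and $B$ compute dominant functions $h_A$ and $h_B$, each dominating every computable function. Take $f(n) = \min(h_A(n), h_B(n))$. Then $f$ still dominates every computable function, so $f$ is hyperimmune; yet $f$ is dominated pointwise by $h_A \le_T A$ and by $h_B \le_T B$, so $f$ is neither $A$-hyperimmune nor $B$-hyperimmune, and the two-element solution set $\{A,B\}$ witnesses failure of preservation while minimality still gives avoidance of one cone. With this repair your route, built on a high c.e.\ minimal pair, is more elementary than the paper's, which instead takes the problem of producing a function dominating $p_A$ for a $\Delta^1_1$-random $A$ and splits the cone-avoidance argument into hyperarithmetical and non-hyperarithmetical cases.
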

\begin{proof}
Fix a $\Delta^1_1$-random set $A = \{x_0 < x_1 < \dots \}$, and let $p_A$ denote its principal function, that is, the function defined by $p_A(n) = x_n$. Let $\Psf$ be the problem with unique instance $\emptyset$. A solution is any function dominating $p_A$. Since $A$ is $\Delta^1_1$-random, it is in particular hyperimmune~\cite{Kautz1991Degrees}, so $\Psf$ does not admit unrelativized preservation of 1 hyperimmunity. We now prove that $\Psf$ admits non-relativized avoidance of 1 cone. Fix a non-computable set $C$, and the unique instance $\emptyset$ of $\Psf$. If $C$ is hyperarithmetical, then since any $\Delta^1_1$-random forms a minimal pair with any non-zero hyperarithmetical set~\cite{DoNiWe06}, $C \not \leq_T A$, and therefore $p_A$ is a $\Psf$-solution to $\emptyset$ such that $C \not \leq_T p_A$. If $C$ is non-hyperarithmetical, then it does not admit a modulus~\cite{Groszek2007Moduli}, and therefore there is a function $f : \omega \to \omega$ dominating $p_A$ such that $C \not \leq_T f$. In either case, there is a $\Psf$-solution $f$ to $\emptyset$ such that $C \not \leq_T f$.
\end{proof}

The other direction does not hold either. A Turing degree $\mathbf{d}$ is \emph{hyperimmune-free} if it does not bound a hyperimmune function. There exists non-zero hyperimmune-free degrees.

\begin{theorem}
There is a problem which admits non-relativized preservation of 1 hyperimmunity, but not non-relativized avoidance of 1 cone.
\end{theorem}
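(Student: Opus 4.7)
The plan is to exploit the definition of hyperimmune-free degrees: a degree $\mathbf{d}$ is hyperimmune-free exactly when every $\mathbf{d}$-computable function is dominated by a computable function. Since non-zero hyperimmune-free degrees exist, I can fix a non-computable set $C$ of hyperimmune-free degree and let $\Psf$ be the problem whose unique instance is $\emptyset$ and whose unique solution is $C$. The construction of $\Psf$ then does all of the work, and the two verifications are very short.

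The failure of non-relativized avoidance of 1 cone is immediate: taking the non-computable set $C$ itself as the ``cone'' to avoid, the only $\Psf$-solution to $\emptyset$ is $C$, and of course $C \leq_T C$. So no solution avoids the cone above $C$.

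For the positive direction, I need to show $\Psf$ admits non-relativized preservation of 1 hyperimmunity. Fix a hyperimmune function $f$; I must show that $f$ is $C$-hyperimmune. Suppose for contradiction that some $C$-computable function $g$ dominates $f$. Since $\deg(C)$ is hyperimmune-free and $g \leq_T C$, there is a computable function $h$ dominating $g$. Then $h$ dominates $f$ as well, contradicting the hyperimmunity of $f$. Hence $f$ is $C$-hyperimmune, and $C$ witnesses preservation of the single hyperimmunity $f$ for the instance $\emptyset$.

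There is essentially no obstacle here; the only substantive ingredient beyond the definitions is the standard characterization of hyperimmune-free degrees in terms of domination by computable functions (and the existence of non-zero such degrees, which is already flagged in the paragraph preceding the theorem statement).
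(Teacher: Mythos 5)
Your proposal is correct and is essentially identical to the paper's own proof: both fix a noncomputable set of hyperimmune-free degree, take the problem with sole instance $\emptyset$ and that set as its sole solution, and use the domination characterization of hyperimmune-free degrees for the preservation direction. Nothing further to add.
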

\begin{proof}
Fix a set $A$ of non-zero hyperimmune-free degree. Let $\Psf$ be the problem with unique instance $\emptyset$. The unique solution is the set $A$.
Clearly, $\Psf$ does not admit non-relativized avoidance of 1 cone.
On the other hand, $\Psf$ admits non-relativized preservation of 1 hyperimmunity. Indeed, fix a hyperimmune function $f$ and the unique $\Psf$-instance $\emptyset$. Since every $A$-computable function is dominated by a computable function, $f$ is hyperimmune relative to $A$.
\end{proof}

Again, the fact that the relativized version of these notions coincide shows that every hyperimmune function behaves, relative to some degree, like a modulus for a non-computable set. On the other hand, no non-zero hyperimmune-free degree remains hyperimmune-free relative to every degree strictly below it.

\subsection{Preserving 1 non-$\Sigma^0_1$ definition}

We now prove our last equivalence of \Cref{thm:avoiding-cones-equivalence}, namely, preserving 1 non-$\Sigma^0_1$ definition is equivalent to avoiding 1 cone. The first direction is immediate, given the fact that if a set is non-computable, then either it or its complement is not $\Sigma^0_1$.

\begin{lemma}
	Preservation of 1 non-$\Sigma^0_1$ definition implies avoidance of 1 cone.
\end{lemma}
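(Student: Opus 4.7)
The plan is to exploit the classical fact that a set $C$ is computable from an oracle $Z$ if and only if both $C$ and its complement $\overline{C}$ are $Z$-c.e. This gives a direct reduction from cone avoidance to preservation of a single non-$\Sigma^0_1$ definition.

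More concretely, I would proceed as follows. Suppose $\Psf$ admits preservation of $1$ non-$\Sigma^0_1$ definition. Fix a set $Z$, a set $C \not\leq_T Z$, and a $Z$-computable instance $X$ of $\Psf$. Since $C$ is not $Z$-computable, by the remark above, at least one of $C$ or $\overline{C}$ fails to be $Z$-c.e.; call that set $B$. Now apply preservation of $1$ non-$\Sigma^0_1$ definition to $Z$, the non-$Z$-c.e.\ set $B$, and the instance $X$, obtaining a solution $Y$ to $X$ such that $B$ is not $(Z\oplus Y)$-c.e.

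Finally I would verify that $C \not\leq_T Z\oplus Y$. Indeed, if $C \leq_T Z\oplus Y$ then both $C$ and $\overline{C}$ would be $(Z\oplus Y)$-computable, hence in particular $(Z\oplus Y)$-c.e., contradicting the choice of $B$. So $Y$ witnesses avoidance of the cone above $C$ for the instance $X$.

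There is really no obstacle here; the only thing to be careful about is the trivial case analysis on whether $C$ or $\overline{C}$ is the witness. The argument is so short that I would likely present it in just a few lines within a single \texttt{proof} environment.
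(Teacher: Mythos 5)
Your proof is correct and is essentially the same as the paper's: both hinge on the observation that a non-$Z$-computable set $C$ has at least one of $C$, $\overline{C}$ not $\Sigma^0_1(Z)$, apply the preservation hypothesis to that set, and conclude that $C \nleq_T Z \oplus Y$ since otherwise both $C$ and $\overline{C}$ would be $(Z\oplus Y)$-c.e. The paper phrases this as ``we may assume $A$ is not $\Sigma^0_1(Z)$, otherwise take the complement,'' which is the same case analysis you spell out.
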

\begin{proof}
	Suppose that $\Psf$ admits preservation of 1 non-$\Sigma^0_1$ definition. Fix $A$ and $Z$ such that $A \nleq_T Z$, and let $X$ be a $Z$-computable instance of $\Psf$. We may assume that $A$ is not $\Sigma^0_1(Z)$, otherwise we take the complement of $A$. Then there is a solution $Y$ to $X$ such that $A$ is not $\Sigma^0_1(Z \oplus Y)$, and so $A \nleq_T Z \oplus Y$.
\end{proof}

The reversal requires several lemmas which will also be useful in a latter section, when studying the hierarchy of preservation of $k$ non-$\Sigma^0_1$ definitions. In particular, these lemmas imply the non-existence of some particular enumeration degrees, namely, totally cototal degrees.

\begin{lemma}\label{lem:non_ce_permitting_in_enumerations}
For every $A \in \Delta^0_2 - \Sigma^0_1$, there is $B \le_e A$ such that:
\begin{enumerate}
\item $B \in \Delta^0_2$; and
\item $B$ is neither left c.e.\ nor right c.e.
\end{enumerate}
Here we identify the set $B$ with the real with binary representation $0.B$.
\end{lemma}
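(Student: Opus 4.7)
The plan is to build $B$ by an $A$-effective finite-extension argument that simultaneously yields a $\Delta^0_2$-approximation, a c.e.\ set $W$ of enumeration-reduction axioms witnessing $\Gamma_W(A) = B$, and meets a diagonalization requirement against every left or right c.e.\ real. First I would fix a computable $\Delta^0_2$-approximation $A_s$ of $A$ and enumerate all computable monotone dyadic-rational sequences $(q_e^s)_{s \in \omega}$, so that every left or right c.e.\ real in $[0,1]$ arises as $\alpha_e := \lim_s q_e^s$ for some $e$. The requirements are $R_e : 0.B \ne \alpha_e$.

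I build $B = \bigcup_e \sigma_e$ by finite extensions, with $\sigma_{-1}$ empty and $\sigma_e \succ \sigma_{e-1}$ chosen at stage $e$, using $A$ as oracle, to satisfy $R_e$. For $\alpha_e$ left c.e., the goal at stage $e$ is to find an extension $\sigma_e \succeq \sigma_{e-1}$ and a stage $s^*$ with $q_e^{s^*} > 0.\sigma_e + 2^{-|\sigma_e|}$; this forces $\alpha_e \ge q_e^{s^*} > 0.B$, so $R_e$ is met. A symmetric search, seeking $q_e^{s^*} < 0.\sigma_e$, handles right c.e.\ $\alpha_e$. Simultaneously, for each new bit $n$ of $\sigma_e$ set to $1$, I add a positive axiom $(n, D_n)$ to $W$ with $D_n \subseteq A_{s^*}$; the crux is choosing these $D_n$ so that in the limit $\{(n,D) \in W : D \subseteq A\}$ recovers exactly $B$, which yields $B \le_e A$ via $\Gamma_W$, and hence also $B \in \Delta^0_2$ by the effectiveness in $A$.

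The main obstacle I anticipate is reconciling the monotonic positivity of $W$ with the priority demands of the $\Delta^0_2$-construction: if later a bit $n$ needs to be retracted (i.e., $\sigma_{e'}(n) = 0$ for some $e' > e$), every axiom $(n, D)$ already placed in $W$ must satisfy $D \not\subseteq A$, or else the enumeration reduction puts $n$ back into $B$. This is precisely where I expect to exploit the hypothesis $A \notin \Sigma^0_1$, via an $A$-permitting argument: because $A$ is not c.e., finite sets $D$ committed at stage $e$ can be chosen to include elements whose eventual removal from $A_s$ is guaranteed, vacating the axiom at the right moment. The same non-$\Sigma^0_1$-ness of $A$ will be used to show that the $A$-computable search for $\sigma_e$ always terminates: a failed search would package the computable search tree, together with the behaviour of $A_s$ on its queries, into a c.e.\ enumeration of $A$, contradicting the hypothesis. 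Once the construction is carried out, the verification is routine—$B \in \Delta^0_2$, $B \le_e A$ via $W$, and each satisfied $R_e$ ensures $0.B$ is neither left c.e.\ nor right c.e.
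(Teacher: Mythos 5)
Your sketch identifies the right central difficulty -- synchronizing a c.e.\ enumeration of axioms $W$ with the $\Delta^0_2$-approximation of $A$, so that bits of $B$ tied to conditional axioms can be withdrawn when needed -- and you correctly guess that the non-$\Sigma^0_1$-ness of $A$ must enter via a permitting argument. But the plan as stated has a genuine gap, and indeed an internal inconsistency: you announce a finite-extension construction ($\sigma_e \succ \sigma_{e-1}$, so $B(n)$ is fixed once decided), yet your key worry is the case ``$\sigma_{e'}(n) = 0$ for some $e' > e$,'' which cannot happen under finite extension. What is really needed is a $\Delta^0_2$-approximation to $B$ that changes -- i.e.\ a priority/injury construction -- and your sketch does not supply that. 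The statement ``finite sets $D$ committed at stage $e$ can be chosen to include elements whose eventual removal from $A_s$ is guaranteed'' is not available to you: you cannot force the $\Delta^0_2$-approximation to move, you can only wait for it. The paper's resolution is to tie each conditional axiom to an initial segment $F_k = $ (positive information in $A_{s_k}\!\restriction k$), wait for a change, and argue -- via a nested module structure with freezing and unfreezing -- that if no module ever terminates, the accumulated set $\bigcup_k F_k$ is a c.e.\ set equal to $A$, contradicting $A \notin \Sigma^0_1$. This module machinery is precisely the missing content.

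Two further issues. First, the two halves of the requirement are not symmetric, and your sketch treats them as if they were. Pushing $0.B$ above a right c.e.\ real (the $Q_e$ requirement in the paper) is easy: one can always raise $B$ by enumerating an unconditional axiom $(x, \emptyset)$, since adding bits is safe for a positive enumeration operator. Pushing $0.B$ below a left c.e.\ real ($R_e$) is the hard direction: lowering $B$ means retracting a bit, which is only possible if the corresponding axiom is conditional and vacates; this is where all the work lies, and your symmetric ``search for $q_e^{s^*} < 0.\sigma_e$'' does not confront it. Second, your claim that a non-terminating search for $\sigma_e$ ``packages into a c.e.\ enumeration of $A$'' is asserted but not justified, and the natural failure mode (the target real $\alpha_e$ already lies outside the cylinder $[\sigma_{e-1}]$, so the search runs forever without harm) has nothing to do with $A$; in a pure finite-extension construction this would simply stall the construction. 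The paper sidesteps this by letting strategies wait indefinitely at a passive step while other strategies proceed, which again is part of the priority framework you would need to import.
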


\begin{proof}
Fix a computable sequence $(A_s)_{s \in \omega}$ converging to $A$.  We simultaneously construct $B$ and an enumeration functional $\Phi$ with $B = \Phi(A)$.  
Our functional $\Phi$ will have the property that for any $x$, there will be at most one axiom $(x, F) \in \Phi$ with $F \neq \emptyset$; from this it follows that $B \in \Delta^0_2$ (using $A \in \Delta^0_2$). 

We interpret c.e.\ sets as subsets of the rationals.  We have the following requirements to meet, for all $e \in \omega$:
\begin{itemize}
\item[$R_e$:] $B \neq \sup (W_e)$;
\item[$Q_e$:] $B \neq \inf (W_e)$.
\end{itemize}

Our construction will be a finite injury construction, and so a strategy for a given requirement will act under the assumption that no higher priority strategy will ever act.

\smallskip

{\em Strategy for requirement $Q_e$:}

\begin{enumerate}
\item Choose a large $x$, and keep both $x$ and $x+1$ out of $B$ (no axioms for $x$ or $x+1$ are to be enumerated into $\Phi$);
\item Wait for a stage $s$ and a $y \in W_{e,s}$ with $y - B_s < 2^{-(x+2)}$.
\item Declare $x \in B$ and enumerate the axiom $(x, \emptyset)$ into $\Phi$.
\end{enumerate}

\smallskip

{\em Strategy for requirement $R_e$:}

We construct a c.e.\ set $D_e$ as we work.  This set is reset whenever the strategy is initialized.

Our strategy will have modules for each $k \in \omega$.  We will begin by running the $0$-module.  For each $k$, the $k$-module may run the $(k+1)$-module, but we will argue that this iteration will eventually terminate.

Here is the $k$-module:
\begin{enumerate}
\item Choose a large $x_k$.  Let the current stage be $s_k$.  Declare $x, x+1 \in B$, enumerating the axioms $(x, F_k)$ and $(x+1, F_k)$ into $\Phi$, where $F_k$ is the positive information from $A_{s_k}\uhr{k}$.
\item Wait for a stage $s$ at which one of the following happens:
\begin{enumerate}
\item $A_s\uhr{k} \neq A_{s_k}\uhr{k}$.  In this case, enumerate axioms $(x, \emptyset)$ and $(x+1, \emptyset)$ into $\Phi$, declaring $x, x+1 \in B$.  Return to Step (1).
\item There is a $y \in W_{e,s}$ with $B_s - y < 2^{-(x+2)}$.  In this case, enumerate all of $F_k$ into $D_e$ and proceed to Step (3).
\end{enumerate}
\item Wait for a stage $s$ with $F_k \not \subseteq A_s$.  While waiting, run the $(k+1)$-module.
\item Freeze the action of any running $j$-modules for $j > k$.
\item Wait for a stage $s$ with $F_k \subseteq A_s$.  When found, return to Step (3), resuming the action of any frozen $j$-modules.
\end{enumerate}

\smallskip

{\em Full construction:} 
Whenever a strategy moves between steps, we initialize all lower priority strategies.  When a $Q_e$-strategy is initialized, we enumerate $(x, \emptyset)$ and $(x+1, \emptyset)$ into $\Phi$ for the strategy's chosen $x$, declaring them both to be in $B$.  Similarly, when an $R_e$-strategy is initialized, we enumerate $(x_j, \emptyset)$ and $(x_j+1, \emptyset)$ into $\Phi$ for all appropriate $j$.

\smallskip

{\em Verification:}

\begin{claim}
For each $e$:
\begin{enumerate}[(a)]
\item The $Q_e$-strategy eventually waits forever at Step (2) or Step (3).
\item There is some $k$ such that for all $j < k$, the $j$-module of the $R_e$-strategy eventually waits forever at Step (3), and the $k$-module eventually waits forever at Step (2) or Step (5).
\end{enumerate}

\end{claim}

\begin{proof}
By simultaneous induction.  (a)$_e$ is immediate.

For (b)$_e$, by induction there is a final time when the $R_e$-strategy is intialized.  Let $s_k$ be the stage at which $x_k$ is chosen after this final initialization, and suppose towards contradiction that $s_k$ is defined for all $k < \omega$.  Since $(A_s)_{s \in \omega}$ converges to $A$, the $k$-module cannot move between Steps (3) and (5) infinitely often, so it must be that each $k$-module eventually waits forever at Step (3).

But then each $F_k \subseteq A$, and $D_e = \bigcup_k F_k$.  Further, for any $z \in A$, there is a sufficiently large $k$ such that $z \in A_{s_k} = F_k$, so $D = A$, contrary to $A$ not being c.e.
\end{proof}

It follows that each strategy is initialized only finitely many times.

\begin{claim}
Each $Q_e$-strategy meets its requirement.
\end{claim}

\begin{proof}
Let $t$ be the final stage at which the $Q_e$-strategy is initialized, so no higher priority strategy acts at any stage $s > t$.  Consider the $x$ chosen by this strategy.  Since lower priority strategies choose their elements large, $B_s\uhr{x} = B_t\uhr{x}$ for all $s > t$.  Observe that by construction, $x+1 \not \in B$.

If the strategy waits forever at Step (2), then certainly $B \neq \inf(W_e)$.

Suppose the strategy moves from Step (2) to Step (3) at stage $t_1 > t$.  Since $x, x+1 \not \in B_{t_1}$, but $x \in B$, we have
\begin{align*}
\inf(W_e) &< B_{t_1} + 2^{-(x+2)}\\
&\le (B\uhr{x})\cat 0^\infty + 2^{-(x+2)} + 2^{-(x+2)}\\
&= (B\uhr{x})\cat 1\cat 0^\infty - 2^{-(x+1)} + 2^{-(x+2)} + 2^{-(x+2)}\\
&= (B\uhr{x})\cat1\cat0^\infty\\
&\le B.
\end{align*}
Thus $B \neq \inf(W_e)$.
\end{proof}

\begin{claim}
Each $R_e$-strategy meets its requirement.
\end{claim}

\begin{proof}
Let $k$ be such that the $k$-module of the strategy eventually waits forever at Step (2) or Step (5), and let $s_j$ for $j \le k$ be the stages at which the $x_j$ is chosen after the strategy's final initialization.  If it is defined, let $s_{k+1}$ be the same for $x_{k+1}$.  Note that for $j < k$, $s_{j+1}$ is also the stage at which the $j$-module first reaches Step (3).  Similarly, $s_{k+1}$ is defined precisely if the $k$-module reaches Step (3), in which case $s_{k+1}$ is the first stage at which this happens.

By construction, for any $y \in (x_{j}+1, x_{j+1})$, one of the following must occur:
\begin{enumerate}[(i)]
\item $(y, \emptyset)$ has been enumerated into $\Phi$ by stage $s_{j+1}$; or
\item For all finite sets $F$, $(y, F) \not \in \Phi$.
\end{enumerate}
To see this: since $x_j$ is chosen large and no higher priority strategy acts after stage $s_0$, no such $y$ can be chosen by a higher priority strategy.  Also, no such $y$ can be chosen by a lower priority strategy after stage $s_{j+1}$, since elements are always chosen large.  If $y$ is chosen by a lower priority strategy before stage $s_{j+1}$, then at stage $s_{j+1}$ we initialize that strategy and enumerate $(y,\emptyset)$ into $\Phi$, if we have not already done so.  If $y$ is chosen by no strategy, then no axioms for $y$ are ever enumerated into $\Phi$.

Note also that for all $j < k$, $x_j \in B$, and indeed $x_j \in B_s$ for any stage at which the $k$-module is running (not frozen).  So $B_{s_k}\uhr{x_k} = B_s\uhr{x_k}$ for any $s > s_k$ at which the $k$-module is not frozen.  The argument is now identical to the argument for the $Q_e$-strategy.
\end{proof}

This completes the proof.
\end{proof}

A set $X$ is \emph{semi-computable} if there is a total computable function $g : [\omega]^2 \to \omega$ such that for every $\{x,y\} \in [\omega]^2$, if $\{x,y\} \cap X \neq \emptyset$ then $g(\{x,y\}) \in \{x,y\} \cap X$.

\begin{corollary}\label{cor:enumerate-semi-computable}
For any set $A \in \Delta^0_2 - \Sigma^0_1$, there is $C \le_e A$ which is semi-computable but not $\Sigma^0_1$.
\end{corollary}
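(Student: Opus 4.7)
My plan is to take $B$ from \Cref{lem:non_ce_permitting_in_enumerations} and set $C$ to be the left cut of the real represented by $B$, that is,
\[ C = L_B := \{ q \in \mathbb{Q} : q < 0.B \},\]
viewed as a subset of $\omega$ via a fixed computable bijection $\mathbb{Q} \leftrightarrow \omega$. Semi-computability is then immediate: since $C$ is downward-closed in $(\mathbb{Q},<)$, the selector $g(\{p,q\}) = \min(p,q)$ (rational minimum) sends any pair meeting $C$ to an element of $C$, and this selector is total computable.

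For the reducibility, I will actually prove the stronger statement $C \le_e B$, whence $C \le_e B \le_e A$. Given any enumeration $b_0, b_1, \ldots$ of $B$, form the computable sequence of rationals
\[ r_s = \sum_{i \le s} 2^{-(b_i+1)}. \]
These are non-decreasing and converge as reals to $0.B$ (since adding an element to the set can only add a positive dyadic contribution). Consequently $L_B = \bigcup_s \{ q \in \mathbb{Q} : q < r_s\}$, and the right-hand side is visibly c.e.\ uniformly in the given enumeration of $B$.

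Finally, $C$ is not $\Sigma^0_1$: if $L_B$ were c.e., then $0.B$ would be a left-c.e.\ real by definition, contradicting the conclusion of \Cref{lem:non_ce_permitting_in_enumerations}.

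I do not expect a genuine obstacle here; the only subtlety worth flagging is that the enumeration argument does not conflict with the non-left-c.e.-ness of $0.B$. The point is that $B$, as a set of integers, lies in $\Delta^0_2 \setminus \Sigma^0_1$ and so is not c.e., meaning an enumeration of $B$ is only available relative to a fixed enumeration of $A$; no unconditional c.e.\ enumeration of $B$, and hence none of $L_B$, is produced.
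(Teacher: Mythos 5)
Your construction is correct and proves the corollary as stated: the left cut $L_B$ is semi-computable via the rational ordering, $L_B \le_e B$ by the partial-sum argument (reading off finite subsets of $B$ as lower bounds), and $L_B$ c.e.\ would make $0.B$ left-c.e., contradicting Lemma~\ref{lem:non_ce_permitting_in_enumerations}. Both your proof and the paper's hinge on the same idea — a Dedekind-cut-style set below the real $0.B$ — so the approach is essentially the same.

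There is, however, one deliberate feature of the paper's version that your construction drops, and it is worth flagging. The paper does not take the full left cut; it fixes a computable sequence of rationals $(q_e)$ converging to $B$ (available because $B \in \Delta^0_2$) and sets $C = \{i : q_i < B\}$. This restriction is what makes $C$ not merely non-c.e.\ but \emph{immune}, as noted parenthetically: because the $q_i$ accumulate at $0.B$, any infinite c.e.\ subset of $C$ would pick out rationals converging up to $0.B$, yielding a left-c.e.\ approximation. Your $L_B$ cannot be immune — since $0.B > 0$, any computable increasing sequence of rationals bounded strictly below $0.B$ is an infinite computable subset of $L_B$. For the corollary's literal statement this does not matter, but the paper later pairs this corollary with Lemma~\ref{lem:equiv-semi-computable} (which equates immunity and hyperimmunity for semi-computable sets) to get hyperimmune sets, and that step needs the $C$ to actually be immune rather than just non-$\Sigma^0_1$. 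If you plan to reuse the corollary in that context, you would want the paper's version rather than the full cut.

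Minor point: when you form $r_s = \sum_{i \le s} 2^{-(b_i+1)}$ from an enumeration $b_0, b_1, \ldots$ of $B$, you should deduplicate (sum over $\{b_0, \ldots, b_s\}$ rather than over the multiset) to guarantee $r_s \le 0.B$; your parenthetical \qt{adding an element to the set} suggests you intend exactly this.
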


The following argument is due to Jockusch.

\begin{proof}
Fix $B \le_e A$ as from \Cref{lem:non_ce_permitting_in_enumerations}.  Since $B$ is $\Delta^0_2$, fix a computable sequence of rationals $(q_e)_{e \in \omega}$ converging to $B$.  Let $C = \{ i : q_i < B\} = \{ i : q_i \le B\}$ (since $B$ is noncomputable).  Then $C \le_e B$, and $C$ is semi-computable via the induced ordering from $\Q$.  $C$ is infinite because $B$ is not right c.e., and it is not $\Sigma^0_1$ (indeed, it is immune) because $B$ is not left c.e.
\end{proof}

A set $B$ is \emph{cototal} if $B \leq_e \overline{B}$.
An enumeration degree $\mathbf{d}$ is \emph{totally cototal} if for it contains a set $A$ such that for every $B \leq_e A$, 
$B$ is cototal (as a set).

\begin{lemma}[Arslanov, Cooper, Kalimullin \cite{Arslanov2003Splitting}]\label{lem:semi-rec-properties}
	Let $A$ be a semi-computable set. Then:
	\begin{enumerate}
		\item $A$ and $\overline{A}$ form a minimal pair in the enumeration degrees.
		\item $A$ is not cototal unless $A$ is c.e.
	\end{enumerate}
\end{lemma}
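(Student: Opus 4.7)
The plan is to treat (1) as the main content; (2) will fall out from (1) by a one-line argument, so the whole effort goes into proving the minimal pair statement.

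For (1), fix a selector $g$ witnessing that $A$ is semi-computable, and suppose $C \le_e A$ via an enumeration operator $\Phi$ and $C \le_e \overline{A}$ via an enumeration operator $\Psi$. I want to show $C$ is c.e., so I will describe a c.e.\ enumeration procedure. The key idea is that the selector $g$ can be used to adjudicate between a would-be ``$A$-axiom'' $(x,F)\in\Phi$ and a would-be ``$\overline{A}$-axiom'' $(x,G)\in\Psi$: if $F\subseteq A$ and $G\subseteq\overline{A}$ then $F$ and $G$ are automatically disjoint, and for every $a\in F$, $b\in G$ we have $a\in A$ and $b\notin A$, so $g(\{a,b\})=a$. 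I therefore enumerate $x$ into a c.e.\ set $W$ whenever I find a pair of axioms $(x,F)\in\Phi$ and $(x,G)\in\Psi$ satisfying the c.e.\ condition that either $F=\emptyset$, or $G=\emptyset$, or $a\ne b$ and $g(\{a,b\})=a$ for all $a\in F$ and $b\in G$.

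Next I verify $W=C$. Completeness ($C\subseteq W$) is immediate from the observation above: any $x\in C$ admits axioms $(x,F)\in\Phi$ with $F\subseteq A$ and $(x,G)\in\Psi$ with $G\subseteq\overline{A}$, and these trigger the enumeration rule. For soundness ($W\subseteq C$) I argue by contradiction from the selector property: suppose $(x,F)$ and $(x,G)$ trigger the rule but neither $F\subseteq A$ nor $G\subseteq\overline{A}$. Then there exist $a^\ast\in F\cap\overline{A}$ and $b^\ast\in G\cap A$. But then $\{a^\ast,b^\ast\}\cap A=\{b^\ast\}\ne\emptyset$, so by the selector property $g(\{a^\ast,b^\ast\})=b^\ast\ne a^\ast$, contradicting the triggering condition. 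Hence either $F\subseteq A$, in which case $x\in\Phi(A)=C$, or $G\subseteq\overline{A}$, in which case $x\in\Psi(\overline{A})=C$. (The empty-set side-cases just say $x\in\Phi(Y)$ or $x\in\Psi(Y)$ for every $Y$.) This shows $C$ is c.e., i.e., $\deg_e(C)=\mathbf 0_e$, so $A$ and $\overline{A}$ form a minimal pair in the enumeration degrees.

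For (2), suppose $A$ is cototal, i.e., $A\le_e\overline{A}$. Since also $A\le_e A$ trivially, the enumeration degree of $A$ lies below both $\deg_e(A)$ and $\deg_e(\overline{A})$; by part (1) this forces $\deg_e(A)=\mathbf 0_e$, so $A$ is c.e.

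There is no serious obstacle here: the entire argument is essentially the remark that a semi-computable selector lets one c.e.-reconcile any conflict between an $A$-oracle enumeration and an $\overline{A}$-oracle enumeration. The only place to be careful is to phrase the triggering condition as a genuinely c.e.\ predicate (bounded over finite $F,G$ and using only the computable $g$) and to handle the trivial cases $F=\emptyset$ or $G=\emptyset$ so that the soundness contradiction has witnesses to pick.
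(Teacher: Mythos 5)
Your proof of (1) is the same argument as the paper's: use the selector to c.e.-enumerate those $x$ admitting compatible axioms $(x,F)\in\Phi$, $(x,G)\in\Psi$ with $g(\{a,b\})=a$ for all $a\in F,\ b\in G$, and argue soundness via the selector property; your derivation of (2) from (1) is likewise the same one-liner. You merely spell out the edge cases ($F$ or $G$ empty, $a=b$) that the paper elides, so there is no substantive difference.
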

\begin{proof}
	For (1), suppose that $B \leq_e A$ and $B \leq_e \overline{A}$ via enumeration operators $\Phi$ and $\Psi$ respectively. Let $f$ be the function that witnesses that $A$ is semi-computable. Then to see that $B$ is c.e., note that
	\[ B = \{x : \text{$\exists$ finite sets $F,G$ such that $x \in \Phi^F$, $x \in \Psi^G$, and for all $a \in F$ and $b \in G$, $f(a,b) = a$}\}. \]
	
	For (2), suppose that $A$ is cototal. Then $\overline{A} \geq_e A$, and so since $A$ and $\overline{A}$ form a minimal pair in the enumeration degrees, $A \equiv_e \varnothing$.
\end{proof}

The following argument is due to Mariya Soskova (private communication).

\begin{corollary}\label{cor:no-totally-cototal}
There is no totally cototal degree above $\mathbf{0}_e$.
\end{corollary}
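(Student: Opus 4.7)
The plan is to assume for contradiction that some $\mathbf{d} > \mathbf{0}_e$ is totally cototal, fix any $A \in \mathbf{d}$ (necessarily not c.e.), and exhibit a single set $C \le_e A$ that fails to be cototal. Since $\mathbf{d}$ being totally cototal requires every such $C$ to be cototal, this yields the contradiction.

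The natural bridge is provided by semi-computable sets. \Cref{lem:semi-rec-properties}(2) tells us that a cototal semi-computable set must be c.e.; contrapositively, any semi-computable non-c.e.\ set is non-cototal. So it suffices to produce some $C \le_e A$ that is both semi-computable and non-c.e.

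When $A \in \Delta^0_2$, this is supplied directly by \Cref{cor:enumerate-semi-computable}: applied to $A \in \Delta^0_2 \setminus \Sigma^0_1$, it yields $C \le_e A$ which is semi-computable and not $\Sigma^0_1$, hence non-cototal by the preceding observation. The target follows immediately in this case.

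For arbitrary $A$ the same strategy proceeds by relativization: one finds a suitable oracle $X$ with $X \le_e A$, $A \in \Delta^0_2(X)$, and $A \notin \Sigma^0_1(X)$. The finite-injury construction of \Cref{lem:non_ce_permitting_in_enumerations} uses its $\Delta^0_2$ approximation $A_s$ only as a computable oracle and relativizes verbatim to produce a set $B$, enumeration-reducible to $A \oplus X$, which is $\Delta^0_2(X)$ and neither left nor right c.e.-in-$X$; a relativized left-cut construction then gives $C \le_e A \oplus X \equiv_e A$ that is semi-computable-in-$X$ and not $\Sigma^0_1(X)$, while the selector-plus-minimal-pair argument of \Cref{lem:semi-rec-properties}(2) relativizes to show $C$ is non-cototal. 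The main obstacle is securing such an oracle $X$: the choice $X = \emptyset$ succeeds precisely when $A$ is $\Delta^0_2$, while the candidate $X = A$ makes $A$ trivially c.e.-in-$X$, so for higher-complexity $A$ one must engineer $X$ by a further construction inside the enumeration-predecessor structure of $\mathbf{d}$ that delicately balances the oracle's enumeration strength with its jump-computational power.
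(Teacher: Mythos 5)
Your plan is correct in broad strokes and your treatment of the $\Delta^0_2$ case matches the paper, but there is a genuine gap: you never close the case of $A$ not $\Delta^0_2$, and the machinery you sketch for it (an auxiliary oracle $X \le_e A$ with $A \in \Delta^0_2(X) \setminus \Sigma^0_1(X)$) is left as an unresolved ``engineering'' task with no construction given. The paper sidesteps this entirely with a short argument you are missing: if $A$ is totally cototal then $A$ must already be $\Delta^0_2$. Concretely, let $L_A$ be the set of all finite binary strings lexicographically to the left of or along $A$. Then $L_A \le_e A$ (to enumerate a string into $L_A$ you only need positive information about $A$), $L_A$ is semi-computable (take the leftmost of two candidates), and $L_A \ge_T A$. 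Total cototality makes $L_A$ cototal, so by \Cref{lem:semi-rec-properties}(2) it is c.e., and then $A \le_T L_A \le_T \emptyset'$, i.e.\ $A \in \Delta^0_2$. With that in hand, your $\Delta^0_2$ branch (applying \Cref{cor:enumerate-semi-computable} and then \Cref{lem:semi-rec-properties} to the resulting semi-computable non-$\Sigma^0_1$ set $C$) finishes the proof exactly as in the paper; the speculative relativized construction is unnecessary.
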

\begin{proof}
Suppose $A$ is a totally cototal set of non-zero degree. First we argue that $A$ is $\Delta^0_2$. Let $L_{A}$ be the set of all finite binary strings lexicographically to the left of or along $A$. Then $L_{A} \leq_e A$. Moreover, $L_{A}$ is semi-computable: let $f(x,y)$ be the left-most of $x$ and $y$. Since $L_{A} \leq_e A$, it is cototal, but by \Cref{lem:semi-rec-properties} $L_{A}$ cannot be cototal unless it is c.e. We also have that $L_A \geq_T A$ so $A$ is $\Delta^0_2$. 

Since $A$ is $\Delta^0_2$ and not $\Sigma^0_1$, by \Cref{cor:enumerate-semi-computable} there is $C \leq_e A$ which is semi-computable but not $\Sigma^0_1$. By assumption, $C$ must be cototal, and so $C \leq_e \overline{C}$. But as mentioned above, each semi-computable set forms a minimal pair in the enumeration degrees with its complement. This gives a contradiction.
\end{proof}

\begin{corollary}
Avoidance of 1 cone implies preservation of 1 non-$\Sigma^0_1$ definition.
\end{corollary}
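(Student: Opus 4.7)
The plan is to assume $\Psf$ admits avoidance of 1 cone, fix $Z$, a set $B \notin \Sigma^0_1(Z)$, and a $Z$-computable $\Psf$-instance $X$, and construct a set $A$ with $A \not\leq_T Z$ such that $A \leq_T W$ for every $W$ with $Z \leq_T W$ and $B \in \Sigma^0_1(W)$. Applying avoidance of 1 cone to $A$ then yields a $\Psf$-solution $Y$ with $A \not\leq_T Z \oplus Y$, which forces $B \notin \Sigma^0_1(Z \oplus Y)$.

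To construct such $A$, I would relativize the proof of \Cref{cor:no-totally-cototal} to $Z$. Since $B$ has non-zero $Z$-enumeration degree, this degree cannot be totally cototal relative to $Z$. Tracing the relativized argument produces a $Z$-semi-computable set $A$ with $A \leq_e^Z B$ and $A \not\leq_e^Z \overline{A}$, via the same dichotomy as in that corollary: either $L_B$ itself is non-cototal (and $L_B$ is automatically $Z$-semi-computable), or $L_B$ is cototal, in which case \Cref{lem:semi-rec-properties}(2) forces $L_B$ to be $Z$-c.e., whence $B \in \Delta^0_2(Z)$, and the relativized \Cref{cor:enumerate-semi-computable} extracts the desired $Z$-semi-computable $A$. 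Non-cototality immediately gives $A \not\leq_T Z$, since otherwise $\overline{A}$ would be $Z$-computable and $A \leq_e^Z \overline{A}$ would follow trivially.

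The crux is establishing the capture property: $A \leq_T W$ whenever $B \in \Sigma^0_1(W)$ and $Z \leq_T W$. The key additional input is the companion reduction $\overline{A} \leq_e^Z \overline{B}$ coming from the left-cut description of $A$: viewing $B$ as the real $0.B$, the set $A$ is (up to $Z$-computable transformation) the set of rationals below $0.B$, and $\overline{A}$ is the set of rationals above, enumerable from an upper approximation to $0.B$, i.e.\ from $\overline{B}$. The main obstacle is that $B \in \Sigma^0_1(W)$ does not automatically give $\overline{B} \in \Sigma^0_1(W)$, so the enumeration of $B$ alone does not supply an enumeration of $\overline{A}$. The expected resolution is to use avoidance of $\omega$ cones, which is equivalent to avoidance of 1 cone by the earlier part of this section, to simultaneously block $A$ together with other candidate witnesses tied to the enumeration operators involved; one then leverages the semi-computable structure of $A$ and the fact that, having blocked $B \leq_T Z \oplus Y$, any $(Z \oplus Y)$-c.e.\ description of $B$ must be one-sidedly proper, to derive the contradiction with $A \not\leq_T Z \oplus Y$.
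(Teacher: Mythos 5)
Your high-level plan follows the same outline as the paper's, but the ``capture property'' you try to establish --- a single $A$ with $A \not\leq_T Z$ and $A \leq_T W$ for \emph{every} $W \geq_T Z$ with $B \in \Sigma^0_1(W)$ --- is too strong to hold in general. By Selman's theorem, the requirement on $A$ is equivalent to $A \oplus \overline{A} \leq_e B \oplus Z \oplus \overline{Z}$, i.e., $A$ is a total set whose $Z$-enumeration degree lies below that of $B$. If $B$ has quasi-minimal $Z$-enumeration degree (for instance if $B$ is $1$-generic relative to $Z$), the only such total sets are $Z$-computable, so no such $A$ exists. You correctly identify the obstacle --- an enumeration of $B$ does not supply an enumeration of $\overline{B}$, hence not of $\overline{A}$ --- but the proposed repair does not close the gap: blocking $B \leq_T Z \oplus Y$ via $\omega$-cone avoidance is strictly weaker than blocking $B \in \Sigma^0_1(Z \oplus Y)$, and the remark about one-sidedly proper enumerations never actually produces a reduction witnessing $A \leq_T Z \oplus Y$ from an arbitrary $(Z \oplus Y)$-enumeration of $B$.

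The paper's resolution does not strengthen $A$ but instead enlarges the ground oracle. Using the relativized \Cref{cor:no-totally-cototal}, one extracts a set $C \leq_e B \oplus Z \oplus \overline{Z}$ that is non-$\Sigma^0_1(Z)$ and not $Z$-cototal. Being not $Z$-cototal means, again by Selman, that there is $G \geq_T Z$ with $\overline{C} \in \Sigma^0_1(G)$ but $C \notin \Sigma^0_1(Z \oplus G)$; in particular $C \not\leq_T Z \oplus G$. One then applies avoidance of one cone \emph{with the new ground set $Z \oplus G$}: the instance $X$ is still $Z \oplus G$-computable, the cone is $C$, and one obtains a solution $Y$ with $C \not\leq_T Z \oplus G \oplus Y$. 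The capture now works because $G$ supplies the missing co-enumeration: if $B \in \Sigma^0_1(Z \oplus Y)$, then $C \in \Sigma^0_1(Z \oplus Y)$, so both $C$ and $\overline{C}$ are c.e.\ in $Z \oplus G \oplus Y$, giving $C \leq_T Z \oplus G \oplus Y$, a contradiction. The step your proposal is missing is precisely this freedom: cone avoidance is stated uniformly over all ground sets, so one may fold an auxiliary co-enumeration into the ground oracle, turning ``$Z \oplus Y$ enumerates $C$'' into ``$Z \oplus G \oplus Y$ computes $C$.''
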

\begin{proof}
Suppose a problem $\Psf$ admits avoidance of 1 cone.
Fix a set $Z$, a non-$\Sigma^0_1(Z)$ set $A$ and a $Z$-computable instance $X$ of $\Psf$. By \Cref{cor:no-totally-cototal} relativized to $Z$, there is a non-$\Sigma^0_1(Z)$ set $C \leq_e A \oplus Z \oplus \overline{Z}$ which is not $Z$-cototal. In other words, there is an enumeration $G$ of $\overline{C}$ such that $C$ is not $\Sigma^0_1(Z \oplus G)$. Since $\Psf$ admits avoidance of 1 cone, there is a $\Psf$-solution $Y$ to $X$ such that $C \not \leq_T Z \oplus G \oplus Y$. We claim that $A$ is not $\Sigma^0_1(Z \oplus Y)$. Indeed, otherwise $C$ would be $\Sigma^0_1(Z \oplus Y)$, and therefore $C \leq_T Z \oplus G \oplus Y$, contradiction.
\end{proof}

Note that the implication from preservation of 1 non-$\Sigma^0_1$ definition to avoidance of 1 cone is natural enough to hold again when considering their non-relativized counterparts. However, these notions are not equivalent.

\begin{lemma}[Folklore]\label{lem:equiv-semi-computable}
Let $A$ be a semi-computable set. The following are equivalent:
\begin{enumerate}[(a)]
	\item $A$ is immune
	\item $A$ is hyperimmune
\end{enumerate}
Either implies (c) that $A$ is not c.e.
\end{lemma}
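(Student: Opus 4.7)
The plan is to handle the three implications separately. The implication $(b) \Rightarrow (a)$ is immediate from the definitions, since a hyperimmune set has no infinite computable subset and hence (via finite modification) no infinite c.e.\ subset. The implication ``either implies (c)'' is also immediate: if $A$ is immune then by definition $A$ is infinite with no infinite c.e.\ subset, so in particular $A$ itself is not c.e.

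The substance is in $(a) \Rightarrow (b)$, which I would prove by contrapositive. Suppose $A$ is semi-computable via a computable $g : [\omega]^2 \to \omega$, and that $A$ is not hyperimmune. By the equivalent characterization of hyperimmunity recalled in the introduction, fix a computable sequence of pairwise disjoint non-empty finite coded sets $F_0, F_1, \dots$ such that $A \cap F_n \neq \emptyset$ for every $n$. The key observation is that $g$ behaves like a tournament in which the ``winner'' of a pairwise match is guaranteed to be in $A$ whenever at least one competitor is: by the defining property of semi-computability, if $x \in A$ or $y \in A$, then $g(\{x,y\}) \in A$.

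I would exploit this by running the tournament computably inside each $F_n$. Enumerate $F_n = \{x_0, x_1, \dots, x_{k-1}\}$ in increasing order, define $m_n^{(0)} = x_0$, and recursively set $m_n^{(i+1)} = g(\{m_n^{(i)}, x_{i+1}\})$. A straightforward induction on $i$ shows that if $\{x_0, \dots, x_i\} \cap A \neq \emptyset$, then $m_n^{(i)} \in A$. Letting $m_n = m_n^{(k-1)}$, the hypothesis $F_n \cap A \neq \emptyset$ yields $m_n \in A$. Since the $F_n$ are pairwise disjoint, the $m_n$ are pairwise distinct, so $\{m_n : n \in \omega\}$ is an infinite c.e.\ subset of $A$, contradicting the assumption that $A$ is immune.

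There is essentially no obstacle here beyond noticing that the semi-computability function lets one computably select, from any finite set, a distinguished element that must lie in $A$ provided the set meets $A$ at all; combined with a witness to non-hyperimmunity, this immediately produces an infinite c.e.\ subset of $A$.
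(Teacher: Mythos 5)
Your proof is correct, and it takes a modestly different route from the paper's. The paper invokes Jockusch's theorem (cited as \cite[Theorem 4.1]{Jockusch1968Semirecursive}) that every semi-computable set is an initial segment of some computable linear order $\Lcal$, and then simply selects $\min_\Lcal F_n$ from each block of a strong array witnessing non-hyperimmunity. You instead work directly from the selector function $g$, running a pairwise tournament inside each $F_n$ so that the eventual winner must lie in $A$ whenever $F_n$ meets $A$. The two arguments are morally parallel (the tournament winner plays the role of the $\Lcal$-minimum), but yours is self-contained and avoids appealing to Jockusch's structural characterization, at the cost of a slightly longer verification; the paper's is shorter given the citation.

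One detail to make explicit: the paper's definition of semi-computable only promises that $g(\{x,y\}) \in \{x,y\} \cap A$ when $\{x,y\}$ meets $A$; when $\{x,y\} \cap A = \emptyset$, the value $g(\{x,y\})$ is unconstrained and need not lie in $\{x,y\}$. Your tournament could therefore wander outside $F_n$, in which case ``the $F_n$ are pairwise disjoint, hence the $m_n$ are pairwise distinct'' would not follow. The fix is routine: replace $g$ by $g'$ defined by $g'(\{x,y\}) = g(\{x,y\})$ if $g(\{x,y\}) \in \{x,y\}$ and $g'(\{x,y\}) = \min\{x,y\}$ otherwise. Then $g'$ is still a computable selector witnessing semi-computability, always returns a member of its input pair, and your inductive claim goes through with $m_n \in F_n$ for all $n$. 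You should state this normalization (or note that it may be assumed without loss of generality) before running the tournament.
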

\begin{proof}
By \cite[Theorem 4.1]{Jockusch1968Semirecursive}, any semi-computable set $A$ is the initial segment of a computable linear order $\Lcal$.
$(b) \rightarrow (a)$ is immediate as every hyperimmune set is immune. 
$(a) \rightarrow (c)$ is also immediate as every infinite c.e.\ set contains an infinite computable subset.
Last, we prove $(a) \rightarrow (b)$. Suppose $A$ is not hyperimmune. Then there is a computable strong array $F_0, F_1, \dots$ such that for every $n \in \omega$, $F_n \cap A \neq \emptyset$. Then $\{ \min_{\Lcal} F_n : n \in \omega \}$ is an infinite c.e.\ subset of $A$ and contains an infinite computable subset.
\end{proof}

\begin{theorem}
There is a problem which admits non-relativized avoidance of 1 cone, but not non-relativized preservation of 1 non-$\Sigma^0_1$ definition.
\end{theorem}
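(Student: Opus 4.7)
The plan is to use a semi-computable witness, mirroring the structure of the analogous counterexample for hyperimmunity that appears earlier in this subsection. By \Cref{cor:enumerate-semi-computable} applied to any $\Delta^0_2 \setminus \Sigma^0_1$ set furnished by \Cref{lem:non_ce_permitting_in_enumerations}, fix a semi-computable set $A$ which is not c.e.; by \Cref{lem:semi-rec-properties}(2) the set $A$ is not cototal, i.e.\ $A \not\leq_e \overline{A}$. Let $\Psf$ be the problem with unique instance $\emptyset$ whose solutions are precisely the sets $Y$ with $A \in \Sigma^0_1(Y)$. Since $Y = A$ is trivially a solution, the problem is well-defined.

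That $\Psf$ fails non-relativized preservation of one non-$\Sigma^0_1$ definition is immediate with witness $A$: $A$ is not c.e., yet every solution $Y$ enumerates $A$ by definition. For non-relativized avoidance of one cone, fix a non-computable $C$ and split on whether $C \leq_T A$. In the easy case $C \not\leq_T A$, take $Y = A$: this is a solution and $C \not\leq_T Y$ by assumption. Otherwise $C \leq_T A$, and I would build a solution $Y$ by forcing: conditions are finite increasing chains in the computable linear order of which $A$ is an initial segment (available from semi-computability), with all entries lying in $A$; extensions append further elements of $A$ greater in this order. A sufficiently generic $Y$ is cofinal in the order and so makes $A$ $\Sigma^0_1$-enumerable from $Y$.

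The hard part will be the diagonalization against each candidate Turing reduction $\Phi^Y = C$ inside this forcing. A straightforward density argument is insufficient: the usual conclusion, that failure of the diagonalization entails $C \leq_T A$, is already the hypothesis in this case and therefore gives no contradiction. The additional leverage comes from the non-cototality $A \not\leq_e \overline{A}$: since the conditions encode only positive information about $A$, a generic $Y$ does not enumerate $\overline{A}$, and hence satisfies $A \not\leq_T Y$. This already closes the subcase $C \equiv_T A$; in the remaining subcase $C <_T A$, it provides enough branching between distinct valid extensions of the current condition (varying which new element of $A$ is appended) to force $\Phi^\tau(n) \ne C(n)$ at some $n$ for every reduction $\Phi$. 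Iterating over all $\Phi$ produces the desired solution $Y$ with $A \in \Sigma^0_1(Y)$ and $C \not\leq_T Y$, completing the proof.
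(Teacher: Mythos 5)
The high-level plan does mirror the paper's: pick a semi-computable, non-c.e.\ set $A$ with non-c.e.\ complement and let the problem be ``enumerate $A$.'' The paper makes a sharper choice --- $A$ is the $\omega$-part of a Tennenbaum ordering of type $\omega+\omega^*$, so that $A$ and $\overline{A}$ are both $\Delta^0_2$, immune, and (via \Cref{lem:equiv-semi-computable}) hyperimmune --- and for cone avoidance splits on whether $C$ is $\Delta^0_2$, citing Proposition~4.4 of Hirschfeldt--Jockusch in the hard case. You try instead to carry out the hard case by a direct forcing argument, and that is where the gap lies.

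You correctly diagnose the obstruction when $C \leq_T A$, but the proposed repair is not a proof. Non-cototality (together with non-cototality of $\overline{A}$, which also holds since $\overline{A}$ is not c.e.) does give a real conclusion: the set of valid extensions of any condition is enumerable from $A$, so for each enumeration operator $W$ the set of conditions forcing $W^{Y} \neq \overline{A}$ is dense --- otherwise one would deduce $\overline{A} \leq_e A$ --- and hence a generic $Y$ satisfies $A \not\leq_T Y$. That disposes of the subcase $C \equiv_T A$. But for $C <_T A$, ``non-cototality provides enough branching'' is not an argument. Running the standard density analysis, if no extension diagonalizes against $\Phi^Y = C$, one obtains only that $C$ and $\overline{C}$ are both enumerable from $A$, i.e.\ $C \oplus \overline{C} \leq_e A$; this conflicts neither with $C \leq_T A$ (which is the standing hypothesis) nor with non-cototality. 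To close the case you would need that the e-degree of $A$ is quasi-minimal (bounds no nonzero total degree), a distinct fact about such semi-computable sets that you neither state nor prove, or you could fall back on Hirschfeldt--Jockusch Proposition~4.4 as the paper does (your $A$ also satisfies its hypotheses). As written, the hard half of cone avoidance is unproved.
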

\begin{proof}
Fix a computable linear ordering $\Lcal$ of order type $\omega+\omega^{*}$ with no infinite computable ascending or descending sequence. Such a linear order exists by Tennenbaum (see~\cite{Rosenstein1982Linear}). Let $A$ be the $\omega$ part of this linear order. In particular, $A$ and $\overline{A}$ are both $\Delta^0_2$, semi-computable and immune. By \Cref{lem:equiv-semi-computable}, $A$ and $\overline{A}$ are both non-$\Sigma^0_1$ and hyperimmune.
Let $\Psf$ be the problem with unique instance $\emptyset$. A solution is an infinite subset of~$A$.

For any solution $Y \subseteq A$, $A = \{ x : \exists y \in Y\, x \le_{\Lcal} y\}$, so $A \in \Sigma^0_1(Y)$, and thus 
$\Psf$ does not admit non-relativized preservation of 1 non-$\Sigma^0_1$ definition.
We claim that $\Psf$ admits non-relativized avoidance of 1 cone.
Fix a non-computable set $C$ and the unique $\Psf$-instance~$\emptyset$.
If $C$ is not $\Delta^0_2$, then $A$ is a $\Psf$-solution to $\emptyset$ such that $C \not \leq_T A$. If $C$ is $\Delta^0_2$, then since $\overline{A}$ is hyperimmune and $\Delta^0_2$, by Proposition 4.4 of \cite{Hirschfeldt2008strength}, there is an infinite subset $H \subseteq A$ such that $C \not \leq_T H$. In both cases, there is a $\Qsf$-solution $Y$ to $\emptyset$ such that $C \not \leq_T Y$.
\end{proof}

\section{The hierarchy of preservations}

Given a coloring $f : [\omega]^n \to k$, an infinite set $H \subseteq \omega$ is \emph{$f$-homogeneous} if $f$ uses only one color on $[H]^n$. Ramsey's theorem asserts the existence of homogeneous sets for every $k$-coloring of $[\omega]^n$. Jockusch~\cite{Jockusch1972Ramseys} proved that whenever $n \geq 3$, there is a computable coloring $f : [\omega]^n \to 2$ such that every $f$-homogeneous set computes $\emptyset'$. However, Wang~\cite{Wang2014Some} proved the surprising result that this is no longer the case when we relax the $f$-homogeneity condition to allow more colors. 
	
\begin{definition}
For every $n, \ell \geq 2$, let $\rt^n_{<\infty, \ell}$ be the problem whose instances are functions $f : [\omega]^n \to k$ for some $k \in \omega$. An $\rt^n_{<\infty, \ell}$-solution to $f$ is an infinite set $H \subseteq \omega$ such that $|f[H]^n| \leq \ell$.
\end{definition}

Wang~\cite{Wang2014Some} proved that for every $n \geq 1$, there is some $\ell$ such that $\rt^n_{<\infty, \ell}$ admits cone avoidance.  Patey~\cite{Patey2016weakness,Patey2017Iterative} proved the following theorem, which shows in particular that the hierarchies of preservation of $\ell$ hyperimmunities and $\ell$ non-$\Sigma^0_1$ definitions is strictly increasing.

\begin{theorem} For every $\ell \geq 1$, $\rt^2_{<\infty, \ell}$ admits preservation of $\ell$ but not $\ell+1$ non-$\Sigma^0_1$ definitions, and of $\ell$ but not $\ell+1$ hyperimmunities.	
\end{theorem}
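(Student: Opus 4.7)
The plan splits the theorem into its negative half (failure at $\ell+1$) and its positive half (success at $\ell$), and within each half I would collapse the two flavors (hyperimmunity and non-$\Sigma^0_1$) by appealing to the equivalence between the hyperimmunity and non-$\Sigma^0_1$ hierarchies asserted in the abstract. Concretely, I would prove the negative direction with hyperimmune witnesses (which are automatically non-$\Sigma^0_1$) and the positive direction with non-c.e.\ witnesses, since in each case that flavor gives the slickest combinatorics.

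For the negative direction the goal is a single computable coloring $f : [\omega]^2 \to \ell+1$ and $\ell+1$ non-c.e.\ hyperimmune sets $B_0, \dots, B_\ell$ so arranged that whenever $H$ is infinite with $|f[H]^2| \le \ell$ and $i \notin f[H]^2$, the set $B_i$ is $H$-c.e.\ and the principal function of $H$ dominates a modulus of $B_i$. I would build $f$ and the $B_i$'s by a finite-injury construction in stages: at stage $s$, colors on fresh pairs of integers are chosen to encode a provisional enumeration of each $B_i$ using only colors different from $i$, while permitting/delaying enumerations into $B_i$ to meet the usual hyperimmunity requirements. Since any $\rt^2_{<\infty,\ell}$-solution to $f$ must omit at least one color, preservation of all $\ell+1$ witnesses fails.

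For the positive direction fix non-c.e.\ sets $B_0, \dots, B_{\ell-1}$ and a computable coloring $f : [\omega]^2 \to k$. I would force with triples $(F, c, X)$, where $F$ is a finite set of integers, $c \subseteq k$ is a set of allowed colors with $|c| \le \ell$, $X$ is an infinite reservoir, and every pair from $F \times X$ and every pair from $X$ is $f$-colored inside $c$. A cohesive preprocessing step analogous to $\coh$ arranges that for every $x$ the eventual color of $x$ against the reservoir exists, partitioning $X$ canonically into at most $|c|$ color-classes. The key density lemma is: given Turing functionals $\Phi_0, \dots, \Phi_{\ell-1}$, there is an extension of any condition forcing $\Phi_i^{H}$ not to enumerate $B_i$ for at least one $i$. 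One proves it by matching the at most $\ell$ color-classes to the $\ell$ sets via a pigeonhole and by reading off from a hypothetical failure on every class a computable enumeration of some $B_i$, contradicting the assumption that $B_i$ is non-c.e.

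The main obstacle is this density lemma, because the iterated forcing must simultaneously handle $\ell$ preservation requirements against the $\le \ell$ color-classes, and the pigeonhole is tight. This tightness is precisely what makes the bound sharp: an $(\ell+1)$-st witness exhausts the pigeonhole on the positive side and is in fact defeated by the coloring constructed on the negative side, matching the two halves of the theorem.
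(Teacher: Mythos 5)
The paper does not actually prove this theorem; it attributes it to Patey~\cite{Patey2016weakness,Patey2017Iterative} and sketches only the negative hyperimmunity direction. Your top-level plan---using the equivalence of the hyperimmunity and non-$\Sigma^0_1$ hierarchies to halve the work---is the right move and is exactly what the paper's surrounding material (Section~3.1) licenses. That much of the decomposition is sound.

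Your negative-direction construction, however, is much more complicated than needed and targets the wrong thing. The paper's sketch is short: fix a $\Delta^0_2$ $(\ell+1)$-partition $A_0 \sqcup \cdots \sqcup A_\ell = \omega$ with each $\overline{A}_i$ hyperimmune, and apply the Shoenfield limit lemma to get a computable $f : [\omega]^2 \to \ell+1$ with $x \in A_{\lim_y f(x,y)}$. Any solution $H$ omits some color $i$, and since the limit along $H$ equals the limit along $\omega$, we get $H \subseteq \overline{A}_i$; thus $p_H$ dominates $p_{\overline{A}_i}$, so $\overline{A}_i$ is not $H$-hyperimmune. No finite injury, no simultaneous coding, and---importantly---no need to arrange that some $B_i$ becomes $H$-c.e., as you propose. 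Domination of the principal function already gives the hyperimmunity failure, and the non-$\Sigma^0_1$ failure then follows from the hierarchy equivalence; your remark that hyperimmune sets are ``automatically non-$\Sigma^0_1$'' is true but does not by itself make $B_i$ become $\Sigma^0_1(H)$, so it is not what makes the non-$\Sigma^0_1$ half go.

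The positive direction (preservation of $\ell$ witnesses) is not proved in the paper at all---it is the substance of Patey's cited work. Your sketch captures the right shape (Mathias-like forcing inside a cohesive set, a pigeonhole between at most $\ell$ color classes and $\ell$ witnesses), but it glosses over the crux. You cannot fix a set $c$ of $\le \ell$ allowed colors at the outset: the conditions must carry a family of candidate color sets and split on them, and the density lemma is a Seetapun-style disjunction argument (either some extension forces an incorrect enumeration for some $B_i$, or the failure of all extensions yields a $\Sigma^0_1$ definition of some $B_i$, contradicting the hypothesis). As written, your density lemma is asserted rather than derived; the ``pigeonhole is tight'' intuition is correct for why the bound is exactly $\ell$, but it is not yet an argument. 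This is a reasonable blind reconstruction of the skeleton of Patey's proof, but it has real gaps and does not correspond to anything the paper itself supplies.
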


Let us sketch the proof that $\rt^n_{<\infty, \ell}$ does not admit preservation of $\ell+1$ hyperimmunities. Given $\ell \geq 1$, build a $\Delta^0_2$ $(\ell+1)$-partition $A_0 \sqcup \dots \sqcup A_\ell = \omega$ such that for every $i \le \ell$, $\overline{A}_i$ is hyperimmune. By Schoenfield's limit lemma, there is a computable coloring $f : [\omega]^2 \to \ell+1$ such that for every $x \in \omega$, $\lim_y f(x, y)$ exists, and $x \in A_{\lim_y f(x, y)}$. We claim that for every $\rt^2_{<\infty, \ell}$-solution $H$ to $f$, there is some $i \leq \ell$ such that $\overline{A}_i$ is not $H$-hyperimmune. Since $|f[H]^2| \leq \ell$, there is some $i \leq \ell$ such that $i \not \in f[H]^2$. In particular, $H \subseteq \overline{A}_i$, so the principal function of $H$ dominates the principal function of $\overline{A}_i$, which proves that $\overline{A}_i$ is not $H$-hyperimmune.

\subsection{The hyperimmunities and non-$\Sigma^0_1$ definitions hierarchies}

We now prove that the two hierarchies of preservation of hyperimmunities and of non-$\Sigma^0_1$ definitions coincide.

\begin{lemma}\label{lem:k-non-ce-to-k-hyperimmune-preservation-1}
For every $k \le \omega$ and every $Z$, for any nondecreasing functions $(f_i)_{i < k}$ which are not $Z$-computably dominated, there is a $G$ and sets $(A_i)_{i < k}$ such that none of the $A_i$ is c.e.\ relative to $Z\oplus G$, but for any $i$ and any function $h$ dominating $f_i$, $A_i$ is c.e.\ relative to $Z\oplus G \oplus h$.
\end{lemma}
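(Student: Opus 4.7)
The plan is to generalize the $k=1$ forcing of \cref{lem:hyperimmune-is-modulus-of-non-zero} to several moduli simultaneously. Conditions are $k$-tuples $\sigma = (\sigma_j)_{j<k}$ of finite-support partial functions $\sigma_j\colon \omega^2\to 2$ that are \emph{valid} in the sense that any two positions $(x,r),(x,r')$ of $\dom\sigma_j$ carrying different values satisfy $\min(r,r')\le f_j(x)$; extensions are componentwise. A sufficiently generic $G=(G_j)_{j<k}$ then has well-defined limits $A_j(x) := \lim_s G_j(x,s)$, and $A_j(x) = G_j(x,h(x))$ whenever $h$ dominates $f_j$, so each $A_j$ is computable and in particular c.e.\ in $Z\oplus G\oplus h$, giving the easy direction of the lemma.

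The heart of the proof is the density of each requirement $R_{i,e}\colon A_i\neq W^{Z\oplus G}_e$. Given a condition $\sigma$ and a column index $n$ fresh in $\sigma_i$, I mimic \cref{lem:hyperimmune-is-modulus-of-non-zero} by defining a $Z$-computable partial function $h$: let $h(n)$ be the least $s$ such that some $\tau\supseteq\sigma$ with $|\tau|=s$ satisfies (a) $\tau_i(n,r)=1$ for every $(n,r)\in\dom\tau_i$; (b) for every pair $(j,x)\neq(i,n)$ the values of $\tau_j$ at positions $\ge s_x^j$ in column $x$ all coincide, so $\tau$ introduces no new alternations outside column $(i,n)$; and (c) $n\in W^{Z\oplus\tau}_{e,s}$. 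Clauses (a) and (b) force every witnessing $\tau$ to be itself a valid condition.

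If $h(n)\uparrow$ for some fresh $n$, I extend $\sigma$ to $\mu$ that fills each column $(j,x)$ in $\dom\sigma_j$ up to position $f_j(x)$ with the stable value $\sigma_j(x,s_x^j)$, and fills column $(i,n)$ with $1$'s up to $f_i(n)$; this commits $A_i(n)=1$, and I argue that $\mu$ forces $n\notin W^{Z\oplus G}_e$, since any finite use $\tau\subseteq G$ for a generic $G\supseteq\mu$ witnessing $n\in W^{Z\oplus G}_e$ would inherit (a) and (b) from $\mu$'s fillings, contradicting $h(n)\uparrow$. If instead $h$ is total $Z$-computable, I use the hypothesis that $f_i$ is not $Z$-computably dominated to pick $n$ with $f_i(n)>h(n)$, let $\tau$ witness $h(n)\downarrow$, and set $\hat\tau\supseteq\tau$ by defining $\hat\tau_i(n,r)=0$ for $r\in(|\tau|,f_i(n)]$: this produces a single new alternation in column $(i,n)$ at a position below $f_i(n)$, while clause (b) prevents any other new alternations, so $\hat\tau$ is a valid condition committing $A_i(n)=0$, and monotonicity of $W$ places $n$ into $W^{Z\oplus\hat\tau}_e$. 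The case $k=\omega$ is then handled by enumerating the countably many pairs $(i,e)$ in the standard way.

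The step I expect to be the main obstacle is Case~1, specifically the claim that $\mu$'s stable-value fillings coerce any generic use $\tau\subseteq G$ to satisfy clause~(b): a generic $G\supseteq\mu$ may introduce fresh alternations in columns $(j,x)$ that $\sigma$ does not touch, and a use querying both alternating values there would violate~(b). The natural remedy is to augment the condition notion with an explicit commitment of stable values on a finite but growing window of columns, in the spirit of the commitment parameter $n$ used in \cref{lem:making-a-set-delta2}; then $\mu$ can pin down the stable value of every column that a length-$h(n)$ computation could query, and the Case~1 argument carries through. This refinement of the condition notion is the technical crux of the proof.
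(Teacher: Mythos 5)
Your plan generalizes the $\Delta^0_2$-approximation forcing of \cref{lem:hyperimmune-is-modulus-of-non-zero}, but the paper's proof of \cref{lem:k-non-ce-to-k-hyperimmune-preservation-1} is structured differently, and the difference is precisely what closes the gap you identify in Case~1.

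You correctly diagnose the obstacle: the generic $G$ is free to introduce alternations at positions below $f_j(x)$ in columns $(j,x)$ not touched by $\sigma$, and a use querying such a column would violate your search clause~(b), so $h(n)\uparrow$ does not imply that $\mu$ forces $n\notin W^{Z\oplus G}_e$. Your proposed remedy, a growing finite window of committed stable values, does not repair this: a use can be arbitrarily long and query columns outside any fixed window, and (especially when $k=\omega$) a $\mu$ filling all columns $(j,x)$ with $j<\omega$, $x\le n$ up to $f_j(x)$ would have infinite support and require the values $f_j(x)$ for infinitely many $j$, which is not available. The underlying reason the fix is hard is that your conditions commit both sides of the limit symmetrically: to force $A_i(n)=1$ or $A_i(n)=0$ you must constrain infinitely many future positions, and so any $Z$-effective search constraint that you can state from $\sigma$ alone will be strictly stronger than what the generic actually satisfies.

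The paper sidesteps all of this by building the $A_i$ only as $\Sigma^0_1$ in $G\oplus h$, not $\Delta^0_2$: it uses $x\in A_i\iff \exists s>f_i(x)\,[(x,s)\in G_i]$, and carries explicit finite side sets $N_i$ whose only effect is to require $\sigma_i(x,s)=0$ for $x\in N_i$, $s>f_i(x)$. This is a one-sided, upward-closed commitment: enlarging $N_i$ only adds more zeros, the search clause (``no $1$ above $f_i(x)$ for $x\in N_i$'') is stated with the \emph{current} finite $N_i$, and every use pulled from the generic automatically satisfies it, so the search genuinely captures all possible computations. Committing $x\in A_i$ requires only a single $1$ above threshold, i.e.\ one new cell, so no analogue of your infinite fill is ever needed. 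Moreover the paper diagonalizes against \emph{some} $y>n$ rather than a specific $n$: in the failure case it concludes that $W^{Z\oplus G}$ is forced finite and wins because $A_j$ is generically infinite, which gives far more slack than trying to decide the fate of a prescribed $n$. These two structural choices --- the $\Sigma^0_1$-modulus form with monotone $N_i$ constraints, and the ``any $y>n$'' target --- are not cosmetic; they are what make the $Z$-effective search match the generic, and I don't see how to reach the same place starting from the $\Delta^0_2$ forcing you propose.

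Finally a small point on your Case~2: the paper picks $y>n$ with $g(n)<f_j(n)\le f_j(y)$ (using that $f_j$ is nondecreasing) and then puts $y$ into $N_j$; this is exactly where monotonicity of the $f_i$ is used, and it works because the largest $s$ with $\tau_j(y,s)=1$ is below $f_j(y)$, so adding $y$ to $N_j$ is compatible with $\tau_j$. Your Case~2 is in the same spirit, and would be fine once the forcing notion is replaced as above.
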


\begin{proof}
We construct $(G_i)_{i < k}$ which will be $\Pi^0_2$-approximations to the $A_i$, with each $f_i$ a $\Sigma^0_1$-modulus for $G_i$.  That is,
\[
x \in A_i \iff \exists^\infty s\, [(x,s) \in G_i] \iff \exists s > f_i(x)\, [(x,s) \in G_i].
\]
Then $G = \bigoplus_{i < k} G_i$.  It is now clear that for any $h$ dominating $f_i$, $A_i$ is c.e.\ relative to $Z\oplus G \oplus h$.  It remains only to show that none of the $A_i$ is c.e.\ relative to $Z\oplus G$.

We will construct our $G_i$ generically.  Conditions in our notion of forcing are pairs of sequences $( (\sigma_i)_{i < k}, (N_i)_{i < k})$, with:
\begin{itemize}
\item $\sigma_i \in 2^{<\omega\times \omega}$;
\item $N_i \in [\omega]^{<\omega}$;
\item If $x \in N_i$, $s > f_i(x)$ and $(x, s) \in \dom(\sigma_i)$, then $\sigma_i(x,s) = 0$; and
\item All but finitely many of the $\sigma_i$ and $N_i$ are empty.
\end{itemize}
Of course the last requirement only matters when $k = \omega$.  Extension is defined elementwise.

Note that for a sufficiently generic filter $F$, $x \in A_i^F \iff x \not \in N_i^F$.

Note also that for any $s > f_j(x)$ and any condition $\rho = ( (\sigma_i)_{i < k}, (N_i)_{i < k})$, if $\sigma_j(x,s) = 1$ then $\rho \Vdash [x \not \in N_i]$.  So for a sufficiently generic filter $F$, each $A_i^F$ is infinite.

For a c.e.\ operator $W$ and a $j < k$, we must show that for a sufficiently generic $G$, $A_j \neq W^{Z\oplus G}$.  Given a condition $\rho = ((\sigma_i)_{i < k}, (N_i)_{i < k})$, we define a function $g$.  For each $n$, search $Z$-effectively for a $(\tau_i)_{i< k}$ and a $y \in \omega$ such that:
\begin{itemize}
\item $y > n$;
\item For each $i < k$, $\tau_i$ extends $\sigma_i$;
\item For all $i < k$, $x \in N_i$ and $s > f_i(x)$, we have $\tau_i(x,s) \neq 1$; and
\item $y \in W^{Z\oplus (\tau_i)_{i < k}}$.
\end{itemize}
Note that this search is $Z$-effective, albeit nonuniformly in the information $\{(i, x, f_i(x)) : x \in N_i\}$.
For the first $y$ and $(\tau_i)_{i < k}$ found, define $g(n)$ to be the largest $s$ with $\tau_j(y,s) = 1$, or $g(n) = 0$ if no such $s$ exists.

If some $g(n)$ is undefined, then no extension of $\rho$ forces any $y > n$ into $W^{Z\oplus G}$, and so $W^{Z\oplus G}$ is finite.  But we already said that $A_j$ is infinite, and so $\rho \Vdash [A_j \neq W_j^{Z\oplus G}]$.

If $g$ is total, then since $g$ is $Z$-computable, there must be an $n$ with $g(n) < f_j(n)$.  Let $(\tau_i)_{i < k}$ and $y$ be the witnesses to the definition of $g(n)$.  Define $M_i = N_i$ for $i \neq j$, and define $M_j = N_j \cup \{ y \}$.  Since $f_j(n) \le f_j(y)$, there is no $s > f_j(y)$ with $\tau_j(y, s) = 1$, so $\hat{\rho} = ( (\tau_i)_{i < k}, (M_i)_{i < k})$ is a condition extending $\rho$, and $\hat{\rho} \Vdash [y \in W^{Z\oplus G} - A_j]$.
\end{proof}

\begin{corollary}
For any $k \le \omega$, preservation of $k$ non-$\Sigma^0_1$ definitions implies preservation of $k$ hyperimmunities.
\end{corollary}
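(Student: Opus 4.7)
The plan is to reduce each hyperimmunity requirement to a non-$\Sigma^0_1$-definition requirement using \Cref{lem:k-non-ce-to-k-hyperimmune-preservation-1}, and then invoke the hypothesis over an enlarged base oracle. Fix a problem $\Psf$ admitting preservation of $k$ non-$\Sigma^0_1$ definitions, a set $Z$, a family $(f_s)_{s<k}$ of $Z$-hyperimmune functions, and a $\Psf$-instance $X \leq_T Z$; the goal is a $\Psf$-solution $Y$ such that every $f_s$ remains $Z \oplus Y$-hyperimmune.

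First, I would replace each $f_s$ by its running maximum $\tilde f_s(n) = \max_{m \le n} f_s(m)$. Since the running-max operation preserves Turing complexity and converts a function dominating $f_s$ into one dominating $\tilde f_s$ (and conversely, any function dominating $\tilde f_s$ a fortiori dominates $f_s$), $f_s$ is $W$-hyperimmune if and only if $\tilde f_s$ is, for every oracle $W$. So we may freely assume the $f_s$ are nondecreasing, which is the hypothesis required by \Cref{lem:k-non-ce-to-k-hyperimmune-preservation-1}.

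Next, apply \Cref{lem:k-non-ce-to-k-hyperimmune-preservation-1} with parameters $Z$ and $(f_s)_{s<k}$ to produce $G$ and sets $(A_s)_{s<k}$ such that no $A_s$ is c.e.\ in $Z \oplus G$, while $A_s$ becomes c.e.\ in $Z \oplus G \oplus h$ whenever $h$ dominates $f_s$. Since $X \leq_T Z \leq_T Z \oplus G$, the instance $X$ is $(Z \oplus G)$-computable and each $A_s$ is non-$\Sigma^0_1(Z \oplus G)$. The hypothesis of preservation of $k$ non-$\Sigma^0_1$ definitions, applied with ground oracle $Z \oplus G$ and the family $(A_s)_{s<k}$, then yields a $\Psf$-solution $Y$ to $X$ such that no $A_s$ is $\Sigma^0_1(Z \oplus G \oplus Y)$.

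To close the argument, suppose some $f_s$ were not $Z \oplus Y$-hyperimmune and let $h$ be a $(Z \oplus Y)$-computable function dominating it. Then $h$ is $(Z \oplus G \oplus Y)$-computable, so the conclusion of \Cref{lem:k-non-ce-to-k-hyperimmune-preservation-1} forces $A_s$ to be c.e.\ in $Z \oplus G \oplus Y$, contradicting the choice of $Y$. The only mild subtlety is the reduction to nondecreasing $f_s$; all the real combinatorial work lives inside \Cref{lem:k-non-ce-to-k-hyperimmune-preservation-1}, so there is no genuine obstacle left in this corollary.
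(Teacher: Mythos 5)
Your proof is correct and follows the same route as the paper: apply \Cref{lem:k-non-ce-to-k-hyperimmune-preservation-1} to obtain $G$ and sets $(A_s)_{s<k}$, invoke the preservation-of-$k$-non-$\Sigma^0_1$-definitions hypothesis with ground oracle $Z\oplus G$, and translate a hypothetical $(Z\oplus Y)$-computable dominating function into a $\Sigma^0_1(Z\oplus G\oplus Y)$ definition of some $A_s$. The only difference is that you explicitly pass to running maxima so as to meet the nondecreasing hypothesis of \Cref{lem:k-non-ce-to-k-hyperimmune-preservation-1}, a detail the paper elides; this is a careful tidying-up rather than a genuinely different argument.
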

\begin{proof}
Suppose a problem $\Psf$ admits preservation of $k$ non-$\Sigma^0_1$ definitions. Fix a set $Z$, $k$ $Z$-hyperimmune functions $f_0, f_1, \dots, f_{k-1}$ and a $Z$-computable $\Psf$-instance $X$. By~\cref{lem:k-non-ce-to-k-hyperimmune-preservation-1}, there is a $G$ and sets $(A_i)_{i < k}$ such that none of the $A_i$ is $\Sigma^0_1(Z\oplus G)$, but for any $i$ and any function $h$ dominating $f_i$, $A_i$ is $\Sigma^0_1(Z\oplus G \oplus h)$. Since $\Psf$ admits preservation of $k$ non-$\Sigma^0_1$ definitions, there is a $\Psf$-solution $Y$ to $X$ such that for every $i < k$, $A_i$ is not $\Sigma^0_1(Z \oplus G \oplus Y)$. In particular, for every $i < k$, $f_i$ is $Z \oplus G \oplus Y$-hyperimmune.
\end{proof}

\begin{lemma}\label{lem:making-a-set-delta2-and-preserving-nonce}
For every set $Z$, every countable sequence of non-$Z$-c.e.\ sets $B_0, B_1, \dots$, and every set $A$, there is a set $G$ such that $B_i$ is not $Z \oplus G$-c.e.\ for every $i \in \omega$ and $A$ is $\Delta^0_2(G)$.
\end{lemma}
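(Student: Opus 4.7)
The plan is to adapt the forcing construction of \Cref{lem:making-a-set-delta2}, replacing the $\Ccal$-avoidance requirements with the requirements that each $B_i$ remain non-$\Sigma^0_1(Z \oplus G)$. I use the very same notion of forcing: conditions are pairs $(\sigma, n)$ where $\sigma$ is a partial function $\subseteq \omega^2 \to 2$ with finite support and $n \in \omega$, with $(\tau, m) \leq (\sigma, n)$ iff $\tau \supseteq \sigma$, $m \geq n$, and $\tau(x, t) = A(x)$ for every $x < n$ and $t$ with $(x, t) \in \dom \tau \setminus \dom \sigma$. Exactly as in \Cref{lem:making-a-set-delta2}, any sufficiently generic filter yields $G \colon \omega^2 \to 2$ whose columns stabilize with $\lim_s G(x, s) = A(x)$, and hence $A$ is $\Delta^0_2(G)$.

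For each pair $(i, e) \in \omega^2$ I will show that the set of conditions forcing $W_e^{Z \oplus G} \neq B_i$ is dense. Given a condition $(\sigma, n)$, define
\[
U = \{x \in \omega : (\exists \tau)\, [(\tau, n) \leq (\sigma, n) \text{ and } x \in W_e^{Z \oplus \tau}]\}.
\]
The defining clause for $U$ involves only $\sigma$ and the finite string $A \uh n$ as parameters, so $U$ is $\Sigma^0_1(Z)$. Since $B_i$ is not $Z$-c.e., $U \neq B_i$, and one may pick some $x \in U \triangle B_i$. In the first case, $x \in U \setminus B_i$: take a witness $\tau$; then $(\tau, n) \leq (\sigma, n)$ is an extension forcing $x \in W_e^{Z \oplus G}$ while $x \notin B_i$, so this extension forces $W_e^{Z \oplus G} \neq B_i$. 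In the second case, $x \in B_i \setminus U$: the condition $(\sigma, n)$ itself already forces $x \notin W_e^{Z \oplus G}$, and hence $W_e^{Z \oplus G} \neq B_i$ since $x \in B_i$.

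The main point to verify carefully is the semantic claim in the second case: if $\Phi_e^{Z \oplus G}(x)$ were to converge for a generic $G$ extending $(\sigma, n)$, it would do so with finite use contained in the stem $\tau''$ of some condition $(\tau'', m'') \leq (\sigma, n)$ in the generic filter; but then $(\tau'', n) \leq (\sigma, n)$ is equally a legitimate extension, since the extension constraint only concerns columns below $n$, witnessing $x \in U$, a contradiction. Since the forcing apparatus was already verified in \Cref{lem:making-a-set-delta2} to produce $G$ with $A \in \Delta^0_2(G)$, no further work is needed on that side, and meeting the countably many dense requirements (one per pair $(i, e)$) via a sufficiently generic filter yields the desired $G$.
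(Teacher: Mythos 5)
Your proof is correct and is essentially identical to the paper's: same notion of forcing, same $\Sigma^0_1(Z)$ set $U$, and the same two-case dichotomy on $U \triangle B_i$. The only difference is that you spell out the semantic justification for the second case (that $(\sigma,n)$ forces $x \notin W_e^{Z\oplus G}$ when $x \notin U$, via the use principle and the fact that dropping the second coordinate to $n$ preserves the extension relation), which the paper leaves implicit.
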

\begin{proof}
Consider again the notion of forcing whose conditions are pairs $(\sigma, n)$, where $\sigma$ is a partial function $\subseteq \omega^2 \to 2$ with finite support,  and $n \in \omega$. A condition $(\tau, m)$ \emph{extends} $(\sigma, n)$ if $\tau \supseteq \sigma$, $m \geq n$, and for every $(x, y) \in \dom \tau \setminus \dom \sigma$ such that $x < n$, $\tau(x, y) = A(x)$. Any sufficiently generic filter yields a stable function whose limit is~$A$.

We now prove that the set of conditions forcing $W_e^{G \oplus Z} \neq B_i$ is dense. Given a condition $(\sigma, n)$, let $U = \{ x : \exists (\tau, n) \leq (\sigma, n)\, x \in W_e^{\tau \oplus Z}\}$. The set $U$ is $\Sigma^0_1(Z)$, so $B_i \Delta U \neq \emptyset$. If there is some $x \in B_i \setminus U$ then the condition $(\sigma, n)$ already forces $x \not \in W_e^{G \oplus Z}$ are we are done. If there is some $x \in U \setminus B_i$, then then condition $(\tau, n) \leq (\sigma, n)$ such that $x \in W_e^{\tau \oplus Z}$ is an extension forcing $x \in W_e^{G \oplus Z}$. In both cases, there is an extension forcing $W_e^{G \oplus Z} \neq B_i$. This completes the proof of the lemma.
\end{proof}

\begin{corollary}
For any $k \leq \omega$, preservation of $k$ hyperimmunities implies preservation of $k$ non-$\Sigma^0_1$ definitions.
\end{corollary}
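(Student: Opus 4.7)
My plan is to use the enumeration-degree machinery of Section 2.3 to reduce preservation of non-$\Sigma^0_1$ definitions to preservation of hyperimmunities. Fix $Z$, non-$Z$-c.e.\ sets $(B_i)_{i<k}$, and a $Z$-computable instance $X$ of $\Psf$. I will produce an oracle $Z'$, hyperimmune functions $(f_i)_{i<k}$ relative to $Z'$, and a $\Psf$-solution $Y$ preserving their hyperimmunity; the construction is arranged so that preservation of the $f_i$ forces preservation of the $B_i$'s non-c.e.-ness.

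First, apply \Cref{lem:making-a-set-delta2-and-preserving-nonce} with $A = \bigoplus_i B_i$ to obtain $G_0$ making each $B_i$ both $\Delta^0_2(Z\oplus G_0)$ and non-c.e.\ relative to $Z_1 := Z\oplus G_0$. For each $i$, relativize \Cref{cor:enumerate-semi-computable} to $Z_1$ to obtain $C_i \le_e B_i \oplus Z_1 \oplus \overline{Z_1}$ that is semi-computable relative to $Z_1$ but not $\Sigma^0_1(Z_1)$; by \Cref{lem:semi-rec-properties}(2), each $C_i$ is not $Z_1$-cototal.

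Next, I carry out a forcing construction producing $G_1$ such that simultaneously for each $i$: (a) $G_1$ enumerates $\overline{C_i}$; (b) $C_i$ is not $\Sigma^0_1$ relative to $Z_2 := Z_1\oplus G_1$; and (c) $C_i$ is immune relative to $Z_2$. Clause (a) is built into the shape of conditions (finite partial enumerations of each $\overline{C_i}$); (b) is obtained by diagonalizing each $W_e^{Z_2}$ against $C_i$ using the non-cototality of $C_i$, in the spirit of the forcing for \Cref{lem:making-a-set-delta2-and-preserving-nonce}; (c) is standard immunity forcing against each $\Phi_e^{Z_2}$ being an infinite subset of $C_i$. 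Since $C_i$ is semi-computable and immune relative to $Z_2$, \Cref{lem:equiv-semi-computable} gives that $C_i$ is $Z_2$-hyperimmune, so $f_i := p_{C_i}$ is a $Z_2$-hyperimmune function.

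Apply preservation of $k$ hyperimmunities to $(f_i)_{i<k}$ and the still-$Z_2$-computable instance $X$ to obtain a $\Psf$-solution $Y$ keeping each $f_i$ hyperimmune relative to $Z_3 := Z_2\oplus Y$. Then each $C_i$ is hyperimmune, hence immune, hence not $\Sigma^0_1$ relative to $Z_3$ by \Cref{lem:equiv-semi-computable}. If $B_i$ were c.e.\ relative to $Z\oplus Y$, then from $Z_1,\overline{Z_1}\le_e Z_3$ we would obtain $C_i \le_e B_i \oplus Z_1 \oplus \overline{Z_1} \le_e Z_3$, contradicting the non-c.e.-ness of $C_i$ relative to $Z_3$. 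Hence every $B_i$ remains non-c.e.\ relative to $Z\oplus Y$, as required. The main obstacle is clause (b) of the forcing: non-cototality of each $C_i$ relative to $Z_1$ is a statement about $Z_1 \oplus \overline{C_i}$, but $G_1$ jointly enumerates all the $\overline{C_j}$'s, and one must verify that this joint enumeration does not re-enumerate any $C_i$; this may require carrying out the construction iteratively, relativizing the oracle between successive indices.
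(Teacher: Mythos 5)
Your plan correctly identifies the right ingredients (\Cref{lem:making-a-set-delta2-and-preserving-nonce}, \Cref{cor:enumerate-semi-computable}, \Cref{lem:equiv-semi-computable}), and your first and last paragraphs are sound. But the middle stage --- forcing a $G_1$ that simultaneously enumerates each $\overline{C_i}$ while keeping each $C_i$ non-c.e.\ and immune relative to $Z_1 \oplus G_1$ --- is where the argument breaks, and the obstacle you flag at the end is in fact fatal to that stage. Non-$Z_1$-cototality of $C_i$ only gives you, for each $i$ separately, \emph{some} enumeration of $\overline{C_i}$ keeping $C_i$ non-c.e.; it gives no control once the same oracle must also enumerate the other $\overline{C_j}$'s, and nothing rules out, say, $C_i \le_e \overline{C_j} \oplus Z_1 \oplus \overline{Z_1}$, in which case feeding in an enumeration of $\overline{C_j}$ makes $C_i$ c.e. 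The iterative relativization you suggest does not repair this, because your non-cototality facts are all relative to the fixed oracle $Z_1$, and after the first iteration that oracle has changed.

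Fortunately the step you cannot carry out is one you do not need. The cototality/enumerate-the-complement device serves to convert ``$C$ not $\Sigma^0_1$'' into a \emph{Turing} fact ($C \not\le_T$ the oracle), which is needed in the neighbouring direction (avoidance of 1 cone $\Rightarrow$ preservation of 1 non-$\Sigma^0_1$ definition) where the preserved property is a cone. Here the preserved property is hyperimmunity, and for semi-computable sets hyperimmunity already entails not being $\Sigma^0_1$ directly via \Cref{lem:equiv-semi-computable}. Moreover the sets $C_i$ produced by \Cref{cor:enumerate-semi-computable} are already \emph{immune} relative to $Z_1$ --- this is explicit in the proof of that corollary (``indeed, it is immune''), not merely ``not $\Sigma^0_1$'' --- and hence already $Z_1$-hyperimmune by \Cref{lem:equiv-semi-computable}. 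So delete the $G_1$ stage entirely: take $f_i = p_{C_i}$, apply preservation of $k$ hyperimmunities with ground set $Z_1$, and run your final paragraph with $Z_1$ and $Z_1 \oplus Y$ in place of $Z_2$ and $Z_3$. That is exactly the paper's proof.
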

\begin{proof}
Suppose some problem $\Psf$ admits preservation of $k$ hyperimmunites.
Fix a set $Z$ and $k$ non-$\Sigma^0_1(Z)$ sets $A_0, \dots, A_{k-1}$.
By \Cref{lem:making-a-set-delta2-and-preserving-nonce}, there is a set $G$ such that $A_0, \dots, A_{k-1}$ are not $\Sigma^0_1(Z \oplus G)$, but $A_0 \oplus \dots \oplus A_{k-1}$ is $\Delta^0_2(G)$. By \Cref{cor:enumerate-semi-computable}, there are semi-$Z$-computable sets $B_0 \leq_e A_0 \oplus Z \oplus \overline{Z}, \dots, B_{k-1} \leq_e A_{k-1} \oplus Z \oplus \overline{Z}$ such that for every $i < k$, $B_i$ is not $\Sigma^0_1(Z \oplus G)$. By \Cref{lem:equiv-semi-computable}, $B_i$ is $Z \oplus G$-hyperimmune. Since $\Psf$ admits preservation of $k$ hyperimmunites, there is a $\Psf$-solution $Y$ to $X$ such that $B_i$ is $Z \oplus G \oplus Y$-hyperimmune for every $i < k$. We claim that $A_i$ is not $\Sigma^0_1(Z \oplus G \oplus Y)$. Indeed, otherwise, $B_i$ would be $\Sigma^0_1(Z \oplus G \oplus Y)$, and by \Cref{lem:equiv-semi-computable}, $B_i$ would not be $Z \oplus G \oplus Y$-hyperimmune.
\end{proof}

\subsection{The hierarchy of constant-bound traces of closed sets}

One can define a similar hierarchy for avoidance of constant-bound traces of closed sets. By the hyperimmune-free basis theorem, $\wkl$ admits preservation of $\omega$ hyperimmunities (hence of $\omega$ non-$\Sigma^0_1$ definitions as well). On the other hand, $\wkl$ does not admit avoidance of constant-bound traces of even 1 closed set. Indeed, letting $\Ccal$ be the effectively closed set of all the completions of Peano arithmetics, every constant-bound trace of $\Ccal$ computes a member of $\Ccal$.
This separates the hierarchies of preservation of hyperimmunities and non-$\Sigma^0_1$ definitions from the hierarchy of constant-bound traces of closed sets.

On the other direction, avoidance of constant-bound traces for closed sets does not imply the preservation of more hyperimmunities than closed sets, as shows the following theorem.

\begin{theorem}
For every $\ell \geq 1$, there is a problem which admits preservation of constant-bound traces for $\ell$ closed sets, but not preservation of $\ell+1$ hyperimmunities.
\end{theorem}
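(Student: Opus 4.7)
The plan is to take $\Psf = \rt^2_{\ell+1,\ell}$, Ramsey's theorem restricted to colorings $f \colon [\omega]^2 \to \ell+1$ with solutions being infinite sets on which $f$ uses at most $\ell$ colors. The relevant facts about this problem have already appeared in the literature via Patey~\cite{PateyCombinatorial,Patey2017Iterative}, and the theorem amounts to combining them.

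For the non-preservation direction, I would essentially replay the sketch given just above for $\rt^2_{<\infty,\ell}$, observing that it already yields an instance of $\rt^2_{\ell+1,\ell}$. Namely, I would build a $\Delta^0_2$ partition $\omega = A_0 \sqcup \cdots \sqcup A_\ell$ into exactly $\ell+1$ pieces such that each $\overline{A}_i$ is hyperimmune; this can be done by a standard finite-injury argument against all potential dominating functions, or by quoting the existence of $\ell+1$ mutually $\Delta^0_2$-hyperimmune complements. Applying Shoenfield's limit lemma yields a computable $f \colon [\omega]^2 \to \ell+1$ with $\lim_y f(x,y) = i \iff x \in A_i$. Any $\rt^2_{\ell+1,\ell}$-solution $H$ for $f$ must omit some color $i$ from $f[H]^2$, whence $H \subseteq^* \overline{A}_i$ and the principal function of $H$ dominates (up to finitely many values) that of $\overline{A}_i$, violating the hyperimmunity of $\overline{A}_i$ relative to $H$. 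Since the $A_i$ are $\Delta^0_2$, each $\overline{A}_i$ is hyperimmune in the unrelativized (and hence in the $\emptyset$-relativized) sense, so $\rt^2_{\ell+1,\ell}$ does not admit preservation of $\ell+1$ hyperimmunities.

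For the positive direction, I would directly invoke Patey's theorem~\cite{PateyCombinatorial} that $\rt^2_{\ell+1,\ell}$ admits avoidance of constant-bound traces for $\ell$ closed sets. A self-contained proof would use a Mathias-like forcing: given $Z$, closed sets $\Ccal_0,\dots,\Ccal_{\ell-1} \subseteq 2^\omega$ having no $Z$-computable constant-bound trace, and a $Z$-computable $f \colon [\omega]^2 \to \ell+1$, build an $f$-solution $H$ through conditions $(F, X)$ with $F$ finite, $X$ infinite, $F < X$, and $X$ still witnessing the non-trace-computability of each $\Ccal_s$ relative to $Z$. The main technical obstacle is the combinatorial splitting step: given a condition, one must either thin $X$ to an infinite $X'$ that preserves all $\ell$ trace-avoidance properties and stabilizes some partial homogeneity, or else split into $\ell+1$ subcases and diagonally show that in some case the required extension exists; Patey's argument handles this via the pigeonhole structure inherent to $\ell+1 \to \ell$ colorings.

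I would then assemble the two halves as a one-line corollary: $\rt^2_{\ell+1,\ell}$ is a problem admitting avoidance of constant-bound traces for $\ell$ closed sets but not preservation of $\ell+1$ hyperimmunities. The truly new work would be only in the first half, which is a minor adaptation of the sketch already in the text; the second half is cited. If a self-contained account of the trace-avoidance half is expected, that forcing construction would be the main obstacle, and I would isolate it as a separate lemma before stating the corollary.
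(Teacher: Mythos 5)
Your proposal is correct and follows essentially the same two-step strategy as the paper: cite Patey~\cite{PateyCombinatorial} for the trace-avoidance half, and re-use the stability/Shoenfield construction from the surrounding discussion for the hyperimmunity half. The only (immaterial) difference is that you take $\rt^2_{\ell+1,\ell}$ where the paper takes $\rt^2_{<\infty,\ell}$; since the negative instance uses only $\ell+1$ colors anyway and Patey's trace-avoidance result is explicitly stated for $\rt^2_{\ell+1,\ell}$ in the cited paper, both choices are equally valid.
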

\begin{proof}
Patey~\cite{PateyCombinatorial} proved that $\rt^2_{<\infty, \ell}$ admits avoidance of constant-bound traces of $\ell$ closed sets. On the other hand, we already argued that $\rt^2_{<\infty, \ell}$ does not admit preservation of $\ell+1$ hyperimmunities.
\end{proof}

We finish this section by proving that preservation of constant-bound traces for $k$ closed sets implies preservation of $k$ hyperimmunities.

\begin{lemma}\label{lem:ktraces-to-khyperimmunities}
For any $k \le \omega$, for any $Z \in 2^\omega$ and $(f_i)_{i < k}$ such that each $f_i$ is $Z$-hyperimmune, there is a $G \in 2^\omega$ and closed sets $(C_i)_{i < k}$ in Cantor space such that each $C_i$ has no constant-bound $(Z\oplus G)$-trace, but for any function $h$ dominating $f_i$, there is a $(Z\oplus G\oplus h)$-computable element of $C_i$.
\end{lemma}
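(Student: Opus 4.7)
The plan is to adapt the proof of \Cref{lem:k-non-ce-to-k-hyperimmune-preservation-1} to the setting of closed sets. For each $i < k$, we will construct $C_i$ as a singleton $\{P_i\}$ where $P_i \in 2^\omega$ is encoded in $G_i$ with $f_i$ serving as a modulus of convergence, exactly in the style of \Cref{lem:k-non-ce-to-k-hyperimmune-preservation-1}: $G_i$ will be a partial function $\subseteq \omega \times \omega \to 2$ satisfying $P_i(n) = G_i(n, s)$ for every $s > f_i(n)$ with $(n, s) \in \dom G_i$. Given any $h$ dominating $f_i$, the path $P_i$ is computable from $Z \oplus G \oplus h$ by a bounded search for some $s \leq h(n)$ with $(n,s) \in \dom G_i$; hence $C_i$ admits a $(Z \oplus G \oplus h)$-computable element. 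As a singleton closed set, $\{P_i\}$ has a constant-bound $(Z \oplus G)$-trace if and only if $P_i$ is constant-bound $(Z \oplus G)$-traceable, so it suffices to force that no $(Z \oplus G)$-computable constant-bound sequence $(F_n)$ satisfies $P_i \uh n \in F_n$ for all $n$.

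The construction is carried out by forcing, with conditions of the form $((\sigma_i)_{i<k}, (\tau_i)_{i<k})$ modelled on \Cref{lem:k-non-ce-to-k-hyperimmune-preservation-1}: each $\sigma_i$ is a finite partial function $\omega \times \omega \to 2$, each $\tau_i \in 2^{<\omega}$ is an initial segment of $P_i$ already committed to, and the consistency constraint reads: if $n < |\tau_i|$, $(n,s) \in \dom \sigma_i$, and $s > f_i(n)$, then $\sigma_i(n,s) = \tau_i(n)$. When $k = \omega$ we also require that all but finitely many $\sigma_i, \tau_i$ are empty. A sufficiently generic filter produces $G = \bigoplus_{i<k} G_i$ and paths $P_i$ extending $\bigcup \tau_i$, satisfying the modulus property.

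The main density argument parallels \Cref{lem:k-non-ce-to-k-hyperimmune-preservation-1}: for each Turing functional $\Phi_e$, each $i < k$, and each $k^* \in \omega$, we show that the set of conditions forcing $\Phi_e^{Z \oplus G}$ to not be a $k^*$-trace of $C_i$ is dense. Given a condition $\rho$ with $n_0 = |\tau_i|$, we define a $Z$-computable function $g$ (nonuniformly in $\rho$ and the finite data $\{f_i(n) : n < n_0\}$): on input $n$, $g(n)$ searches for an extension $\sigma_i^* \supseteq \sigma_i$ with $\Phi_e^{Z \oplus \sigma_i^*}(n) \downarrow = F$ for some $F \subseteq 2^n$ of size $k^*$, and returns the maximum stage used in $\sigma_i^*$. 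If $g(n)$ is undefined for some $n$, then $\rho$ already forces $\Phi_e^{Z \oplus G}(n)$ to diverge or to fail the $k^*$-subset requirement. Otherwise $g$ is total, and since $f_i$ is not $Z$-computably dominated, there is some $n$ with $n > n_0 + \log_2 k^*$ and $f_i(n) > g(n)$. At this $n$, every stage used in $\sigma_i^*$ is $\le g(n) < f_i(n)$; since $|F| = k^* < 2^{n - n_0}$, we may select $\tau^* \in 2^n$ extending $\tau_i$ with $\tau^* \notin F$, and extend to the condition with new data $\sigma_i^*$ and $\tau_i^* = \tau^*$, forcing $P_i \uh n = \tau^* \notin F = \Phi_e^{Z \oplus G}(n)$.

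The main technical obstacle is verifying that this new condition is genuinely consistent, i.e., that the entries of $\sigma_i^*$ at columns $m \in [n_0, n)$ do not conflict with the newly committed values $\tau^*(m)$. In \Cref{lem:k-non-ce-to-k-hyperimmune-preservation-1} only a single new element $y$ is added to $N_j$, so the constraint only needs to be checked at the single column $y$, where the nondecreasingness of $f_j$ gives $f_j(y) \ge f_j(n) > g(n)$ and hence $y$ is safely outside of any forced behaviour in $\sigma_j$. Here we are committing an entire block of new columns at once, so the search for $\sigma_i^*$ must be arranged (for instance, by restricting the type of extensions considered and using the monotonicity of $f_i$ together with $g(n) < f_i(n)$) to keep all its column-$m$ stages below the modulus $f_i(m)$ after possibly further generic extensions, so that the block of new commitments $\tau^*(m)$ can be realized consistently. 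This coordination of simultaneous commitments, together with the hyperimmunity-driven choice of $n$, is the point requiring the greatest care.
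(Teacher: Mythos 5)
The gap you flag at the end is real, and it is the whole problem: the singleton approach does not work as described, and the coordination you gesture at cannot be "arranged" in the way you hope.

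Concretely: with $C_i=\{P_i\}$, diagonalizing the trace at level $n$ forces you to commit $P_i(m)$ for \emph{every} new column $m\in[n_0,n)$. The consistency constraint at column $m$ concerns stages $s>f_i(m)$, and since $f_i$ is nondecreasing you have $f_i(m)\le f_i(n)$ for $m<n$. The hyperimmunity only gives you $g(n)<f_i(n)$, which says nothing about whether $\sigma_i^*$ touches column $m$ at stages in the interval $(f_i(m),f_i(n)]$. For such $m$, the bit $\tau^*(m)$ is either forced by $\sigma_i^*$ or, if $\sigma_i^*$ has both values there, no choice of $\tau^*(m)$ is consistent. You cannot restrict the search to extensions that stay below $f_i(m)$ for these columns, because the construction does not know $f_i(m)$ (you only carry the finitely many values $f_i(n)$ for $n$ already committed), and such a restriction would also break the ``if $g(n)$ is undefined then $\Phi_e$ is partial'' half of the dichotomy. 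So the free-bit count $2^{n-n_0}$ that you invoke is not available: the number of actually free bits in $[n_0,n)$ is uncontrolled and may be zero.

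The paper's proof sidesteps this precisely by \emph{not} using a singleton. It takes $C_i$ to be the class of separators of a pair of disjoint sets $A_i,B_i$, each with $\Sigma^0_2$ approximation $E_i,F_i$ and $f_i$ as a $\Pi^0_1$-modulus. For a $b$-bounded trace $\Phi$ it queries the trace at level $n+b$, obtaining candidates $\pi_0,\dots,\pi_{b-1}$ of length $n+b$, and then commits only the $b$ columns $n,n+1,\dots,n+b-1$: for each $a<b$ it places $n+a$ into $A_j$ or $B_j$ according to $1-\pi_a(n+a)$, so $\pi_a$ cannot be a separator prefix. All the newly constrained columns are $\ge n$, so nondecreasingness gives $f_j(n+a)\ge f_j(n)>g(n)$, and $g(n)$ is defined to bound exactly the stages where the searched-for extension has bad (zero) entries in those columns; hence the extension really is a condition. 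This "kill one candidate per fresh column" trick is what a separating class buys and a singleton does not: with a singleton the trace validity is about the \emph{entire} prefix $P_i\uh(n+b)$, so you cannot leave the columns below $n$ undetermined, whereas for a separating class it suffices to wreck each $\pi_a$ at one late column. If you want to keep something close to your setup, you would need to replace the string $\tau_i\in 2^{<\omega}$ by a finite partial function on $\omega$ and move to the separating-class diagonalization; but then you are essentially reproducing the paper's proof.
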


\begin{proof}

We construct sets $(E_i)_{i < k}$ and $(F_i)_{i < k}$, which will be $\Sigma^0_2$ approximations to sets $A_i$ and $B_i$, respectively, and such that each $f_i$ is a $\Pi^0_1$-modulus for $A_i$ and $B_i$.  That is,
\[
x \in A_i \iff \exists s\, \forall t > s\, [(x, t) \in E_i] \iff \forall t > f_i(x)\, [(x,t) \in E_i]
\]
and
\[
x \in B_i \iff \exists s\, \forall t > s\, [(x, t) \in F_i] \iff \forall t > f_i(x)\, [(x,t) \in F_i].
\]
Then $G = \bigoplus_{i < k} E_i \oplus \bigoplus_{i < k} F_i$.

Our sets will have the property that $A_i \cap B_i = \emptyset$.  Each $C_i$ will then be the set of separators of $A_i$ and $B_i$.  That is, $C_i = \{ X \in 2^\omega : A_i \subseteq X \wedge B_i \subseteq \overline{X}\}$.  Observe that if $h$ dominates $f_i$, then $A_i$ and $B_i$ are $\Pi^0_1(h\oplus G)$, and so $h\oplus G$ computes an element of $C_i$.  It remains only to show that none of the $C_i$ has a constant-bound trace relative to $Z\oplus G$.

We will construct our $E_i$ and $F_i$ simultaneously generically.  Conditions in our notion of forcing are tuples of sequences $( (\sigma_i)_{i < k}, (N_i)_{i < k}, (\tau_i)_{i < k}, (M_i)_{i < k})$ with:
\begin{itemize}
\item $\sigma_i, \tau_i \in 2^{<\omega\times\omega}$;
\item $N_i, M_i \in [\omega]^{<\omega}$;
\item If $x \in N_i$, $s > f_i(x)$ and $(x,s) \in \dom(\sigma_i)$, then $\sigma_i(x,s) = 1$;
\item If $x \in M_i$, $s > f_i(x)$ and $(x,s) \in \dom(\tau_i)$, then $\tau_i(x,s) = 1$;
\item $N_i \cap M_i = \emptyset$; and
\item All but finitely many of the $\sigma_i, \tau_i, N_i$ and $M_i$ are empty.
\end{itemize}
Extension is defined elementwise.

Note that for a sufficiently generic filter, $A_i = N_i$ and $B_i = M_i$.  

For a $b \in \omega$, a $j < k$ and a Turing functional $\Phi$, we must show that for a sufficiently generically chosen $G$, $\Phi^{Z\oplus G}$ is not a $b$-bounded trace of $C_j$.  We may assume that for all oracles $X$ and all $n \in \omega$, $\Phi^X(n)$ is a subset of $2^n$ of size at most $b$.  Given a condition $\rho = ( (\sigma_i)_{i < k}, (N_i)_{i < k}, (\tau_i)_{i < k}, (M_i)_{i < k})$, we define a function $g$.  On input $n$, we search for $(\hat{\sigma}_i)_{i < k}$ and $(\hat{\tau}_i)_{i < k}$
 such that:
\begin{itemize}
\item For each $i$, $\hat{\sigma}_i$ extends $\sigma_i$, and $\hat{\tau}_i$ extends $\tau_i$;
\item For all $i < k$, $x \in N_i$ and $s > f_i(x)$ with $(x,s) \in \dom(\hat{\sigma}_i)$, we have $\hat{\sigma}_i(x,s) = 1$;
\item For all $i < k$, $x \in M_i$ and $s > f_i(x)$ with $(x,s) \in \dom(\hat{\tau}_i)$, we have $\hat{\tau}_i(x,s) = 1$; and
\item $\Phi^{Z\oplus (\hat{\sigma}_i)_{i < k} \oplus (\hat{\tau}_i)_{i < k}}(n+b)\converge$.
\end{itemize}
Note that this search is $Z$-effective, albeit nonuniformly in the information $\{(i, x, f_i(x)) : x \in N_i \cup M_i\}$.  For the first $(\hat{\sigma}_i)_{i < k}$ and $(\hat{\tau}_i)_{i < k}$ found, define $g(n)$ to be the largest $s$ with $\hat{\sigma}_j(n+a, s) = 0$ or $\hat{\tau}_j(n+a,s) = 0$ for some $a < b$, or $g(n) = 0$ if no such $s$ exists.

If some $g(n)$ is undefined, then no extension of $\rho$ forces $\Phi^{Z\oplus G}(n+b)\converge$, and so $\rho$ forces that $\Phi^{Z\oplus G}$ is partial, and thus not a $b$-bounded trace.

If $g$ is total, then since it is $Z$-computable, there must be an $n$ with $g(n) < f_j(n)$.  Let $(\hat{\sigma}_i)_{i < k}$ and $(\hat{\tau}_i)_{i < k}$ be the witnesses to the definition of $g(n)$.  Let $\Phi^{Z\oplus (\hat{\sigma}_i)_{i < k} \oplus (\hat{\tau}_i)_{i < k}}(n+b) = \{ \pi_0, \dots, \pi_{b-1}\}$.  Define $\hat{N}_i = N_i$ and $\hat{M}_i = M_i$ for $i \neq j$.  Define $\hat{N}_j = N_j \cup \{ a < b : \pi_a(n+a) = 0 \}$ and $\hat{M}_j = M_j \cup \{ a < b : \pi_a(n+a) = 1\}$.  Since $f_j(n) > g(n)$, there is no $s > f_j$ and $a < b$ with $\hat{\sigma}_j(n+a,s) = 0$ or $\hat{\tau}_j(n+a,s) = 0$, so $\hat{\rho} = ( (\hat{\sigma}_i)_{i < k}, (\hat{N}_i)_{i < k}, (\hat{\tau}_i)_{i < k}, (\hat{M}_i)_{i < k})$ is a condition extending $\rho$, and $\hat{\rho}$ forces that no element of $\Phi^{Z\oplus G}(n)$ is extendible to an element of $C_j$.
\end{proof}

\begin{corollary}
For all $k \le \omega$, avoidance of constant-bound traces for $k$ closed sets implies preserving $k$ hyperimmunities.
\end{corollary}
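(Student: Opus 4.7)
The plan is to deduce the corollary directly from the preceding \Cref{lem:ktraces-to-khyperimmunities}, by a routine contrapositive argument about the hyperimmunity of the $f_i$. Suppose $\Psf$ admits avoidance of constant-bound traces for $k$ closed sets, and fix a set $Z$, a family $(f_i)_{i<k}$ of $Z$-hyperimmune functions (which, by replacing $f_i$ with $n\mapsto\max_{m\le n}f_i(m)$, we may assume are nondecreasing), and a $Z$-computable $\Psf$-instance $X$.

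First I would apply \Cref{lem:ktraces-to-khyperimmunities} to $Z$ and $(f_i)_{i<k}$ to obtain a set $G$ and closed sets $(\Ccal_i)_{i<k}\subseteq 2^\omega$ such that each $\Ccal_i$ has no $(Z\oplus G)$-computable constant-bound trace, while each $\Ccal_i$ does have a $(Z\oplus G\oplus h)$-computable member whenever $h$ dominates $f_i$. Since $X$ is $Z$-computable, it is $(Z\oplus G)$-computable, so I can apply the hypothesis that $\Psf$ admits avoidance of constant-bound traces for $k$ closed sets to the ground set $Z\oplus G$ and the family $(\Ccal_i)_{i<k}$, obtaining a $\Psf$-solution $Y$ to $X$ such that no $\Ccal_i$ has a $(Z\oplus G\oplus Y)$-computable constant-bound trace.

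It remains to verify that $f_i$ is $(Z\oplus G\oplus Y)$-hyperimmune for every $i<k$. Suppose for contradiction that some $(Z\oplus G\oplus Y)$-computable function $h$ dominates $f_i$. By the lemma there is a $(Z\oplus G\oplus h)$-computable member $P\in\Ccal_i$, and hence $P$ is $(Z\oplus G\oplus Y)$-computable. But then the sequence of singletons $F_n=\{P\uh n\}$ is a $1$-trace of $\Ccal_i$ computable from $Z\oplus G\oplus Y$, contradicting the choice of $Y$. Therefore each $f_i$ remains hyperimmune relative to $Z\oplus G\oplus Y$, and in particular relative to $Z\oplus Y$, as required.

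There is no real obstacle beyond having the right lemma available: the only subtle point is remembering that a single member of a closed set yields a $1$-bounded (hence constant-bound) trace, which is what converts the hypothesis about avoiding constant-bound traces into a statement that forbids computing members outright. Everything else is plug-and-play with \Cref{lem:ktraces-to-khyperimmunities}.
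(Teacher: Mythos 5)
Your proof is correct and follows essentially the same route as the paper: apply \Cref{lem:ktraces-to-khyperimmunities} to get $G$ and the closed sets $(\Ccal_i)_{i<k}$, apply the avoidance hypothesis with ground set $Z\oplus G$ to obtain $Y$, then conclude hyperimmunity. You helpfully make explicit the step the paper compresses into ``in particular'' (a dominating $h\le_T Z\oplus G\oplus Y$ would yield a computable member, hence a $1$-trace, of $\Ccal_i$), and you correctly flag that one should pass to nondecreasing majorants before invoking the lemma, a hypothesis the paper's lemma statement omits even though its proof uses it.
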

\begin{proof}
Suppose a problem $\Psf$ admits avoidance of constant-bound traces for $k$ closed sets. Fix a set $Z$, a collection of $Z$-hyperimmune functions $(f_i)_{i < k}$ and a $Z$-computable $\Psf$-instance $X$. By~\Cref{lem:ktraces-to-khyperimmunities}, there is a $G$ and closed sets $(\Ccal_i)_{i < k}$ in the Cantor space such that none of the $\Ccal_i$ has a constant-bound $Z \oplus G$-trace, but for any function $h$ dominating $f_i$, there is a $Z \oplus G \oplus h$-computable element of $\Ccal_i$. Since $\Psf$ admits avoidance of constant-bound traces for $k$ closed sets, there is a $\Psf$-solution $Y$ to $X$ such that for every $i < k$, $\Ccal_i$ has no constant-bound $Z \oplus G \oplus Y$-trace. In particular, for every $i < k$, $f_i$ is $Z \oplus G \oplus Y$-hyperimmune.
\end{proof}

\section{Immunity and closed sets}

This last section is devoted to the study of two notions of preservation whose hierarchies collapse, namely, preservation of immunities and avoidance of closed sets. These notions are strictly stronger than the notions of preservation we considered so far, and are not known to be distinct.

\begin{lemma}\label{lem:immune-almost-join}
	Fix a set $Z$ and $A_0,A_1,\ldots$ all $Z$-immune. There is a set $G$ which is $Z$-immune and such that for every $n$, $G[n] =^* A_n$.
\end{lemma}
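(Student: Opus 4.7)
The plan is to construct $G$ directly by a stage-by-stage argument, where at stage $e$ we handle the requirement
\[
R_e \colon \text{if } W_e^Z \text{ is infinite, then } W_e^Z \not\subseteq G,
\]
subject to the global constraint that each column $G[n] = \{x : \la n, x \ra \in G\}$ differs from $A_n$ in only finitely many places. I will maintain a pair $(F_s, k_s)$, where $F_s \subseteq \omega \times \omega$ is a finite set of ``exceptions'' and $k_s \in \omega$ is a threshold below which no further exceptions will be added. At the end of the construction, $G$ is defined by $\la n, x \ra \in G \iff x \in A_n \triangle F_\infty[n]$, where $F_\infty = \bigcup_s F_s$ and $F_\infty[n] = \{x : \la n, x \ra \in F_\infty\}$.

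Start with $(F_0, k_0) = (\emptyset, 0)$. At stage $s$, check whether there is some $\la n, x \ra \in W_s^Z$ with $n \geq k_s$. If yes, pick the least such pair, set $k_{s+1} = n + 1$, and let $F_{s+1} = F_s \cup \{\la n, x \ra\}$ if $x \in A_n$, else $F_{s+1} = F_s$; either way, this ensures $\la n, x \ra \notin G$. If no such pair exists, set $(F_{s+1}, k_{s+1}) = (F_s, k_s + 1)$. Since $k_s$ jumps strictly above the column of every added exception, each column is modified at most once, and hence $G[n] =^* A_n$ for every $n$.

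The key claim is that $G$ is $Z$-immune. Suppose for contradiction that $W_e^Z$ is infinite and $W_e^Z \subseteq G$, and look at stage $s = e$. If the stage-$e$ search succeeded and produced some $\la n, x \ra \in W_e^Z$ with $n \geq k_e$, then by construction $\la n, x \ra \notin G$, a direct contradiction. Otherwise $W_e^Z \subseteq \{0, \dots, k_e - 1\} \times \omega$, so by pigeonhole some column $n < k_e$ satisfies that $W_e^Z[n] = \{x : \la n, x \ra \in W_e^Z\}$ is infinite; this set is $Z$-c.e. Since $W_e^Z[n] \subseteq G[n] =^* A_n$, the set $W_e^Z[n] \cap A_n$ is an infinite $Z$-c.e.\ subset of $A_n$, contradicting the $Z$-immunity of $A_n$.

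The main obstacle I anticipate is identifying the correct division of labor in the verification: the exception mechanism takes care of any $W_e^Z$ with elements appearing in columns $n \geq k_e$, whereas the hypothesis that each $A_n$ is $Z$-immune is exactly what is needed to rule out the case of $W_e^Z$ being supported entirely in the initial segment of columns. Note that the construction is not required to be effective in any way; it may freely consult the oracle $W_e^Z$ and the membership relation for each $A_n$.
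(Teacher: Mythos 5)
Your proof is correct and follows essentially the same strategy as the paper's: use the immunity of the $A_n$'s to rule out infinite $Z$-c.e.\ sets supported on boundedly many columns, and diagonalize directly (removing at most one point per column, so $G[n] =^* A_n$ is preserved) against those that hit unboundedly many columns. The only difference is presentational — the paper pre-filters the $Z$-computable sets into the two types before diagonalizing, whereas you run a single uniform construction with the threshold $k_s$ and sort the two cases out in the verification — but this is the same argument.
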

\begin{proof}
 	Write $\omega^{[n]} = \{\la n,x \ra : x \in \omega\}$. We will define $G = \bigoplus_{n \in \omega} G_n$ where $G_n =^* A_n$. We want $G$ to be $Z$-immune, so that no $Z$-computable set is a subset of $G$. Since each $A_n$ is immune, no $Z$-computable infinite set which is a subset of $\omega^{[0]} \cup \cdots \cup \omega^{[n]}$ can be a subset of $G$.
	
	Let $B_0,B_1,\ldots$ be a list of the infinite $Z$-computable sets which intersect infinitely many of the $\omega^{[n]}$. Suppose that we have defined $a_0 < a_1 < \cdots < a_k$ and $G_0,\ldots,G_k$ such that for each $i \leq k$, $B_i \cap \omega^{[a_k]} \nsubseteq \la k , G_k \ra$. Then for some $a_{k+1} > a_k$, $B_{k+1}$ intersects $\omega^{[a_{k+1}]}$, say $\la a_{k+1},n \ra \in B_{k+1}$. Set $G_{i} = A_{i}$ for $a_k < i < a_{k+1}$, and $G_{a_{k+1}} = A_{a_{k+1}} - \{n\}$. So no $B_i$ is a subset of $G$, and hence $G = \bigoplus_{n \in \omega} G_n$ is $Z$-immune.
\end{proof}

\begin{theorem}
	Let $\Psf$ be a problem. Then the following are equivalent:
	\begin{enumerate}
		\item $\Psf$ admits preservation of 1 immunity.
		\item $\Psf$ admits preservation of $\omega$ immunities.
	\end{enumerate}
\end{theorem}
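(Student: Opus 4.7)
The direction $(2) \Rightarrow (1)$ is immediate from the definitions: if every $Z$-computable instance has a solution simultaneously preserving $\omega$-many $Z$-immunities, then in particular it preserves a single one.

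For $(1) \Rightarrow (2)$, the plan is to reduce preservation of countably many immunities to preservation of a single immunity via \Cref{lem:immune-almost-join}. Suppose $\Psf$ admits preservation of $1$ immunity. Fix a set $Z$, a countable collection $A_0, A_1, \dots$ of $Z$-immune sets, and a $\Psf$-instance $X \leq_T Z$. Apply \Cref{lem:immune-almost-join} relative to $Z$ to obtain a single $Z$-immune set $G = \bigoplus_n G_n$ with $G_n =^* A_n$ for every $n$. Now apply preservation of $1$ immunity to the instance $X$ with respect to the $Z$-immune set $G$: this yields a $\Psf$-solution $Y$ to $X$ such that $G$ is $Z \oplus Y$-immune.

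It remains to argue that each $A_n$ is $Z \oplus Y$-immune. First observe that each column $G_n = \{x : \langle n, x\rangle \in G\}$ must be $Z \oplus Y$-immune: if $H \subseteq G_n$ were an infinite $(Z \oplus Y)$-computable set, then $\{\langle n, x\rangle : x \in H\}$ would be an infinite $(Z \oplus Y)$-computable subset of $G$, contradicting the $(Z \oplus Y)$-immunity of $G$. Since $A_n =^* G_n$, and immunity is invariant under finite symmetric difference, $A_n$ is also $Z \oplus Y$-immune, as required.

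There is no real obstacle here beyond invoking \Cref{lem:immune-almost-join}: the heart of the argument is already done in that lemma, which packages the countable sequence of immune sets into a single immune set whose columns almost agree with the given sets. The only thing to verify carefully is the transfer of immunity from $G$ to its individual columns, which follows from the effectivity of the pairing function.
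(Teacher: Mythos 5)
Your proof is correct and takes essentially the same approach as the paper: both invoke \Cref{lem:immune-almost-join} to package the countably many immune sets into a single immune set $G$ with columns almost equal to the $A_n$, apply preservation of one immunity to $G$, and then transfer immunity back to each $A_n$ via the finite-difference observation. The only cosmetic difference is that you factor the transfer into two steps (columns of $G$ are immune, then $A_n =^* G_n$), while the paper does it in one.
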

\begin{proof}
	(2)$\Rightarrow$(1) is obvious. For (1)$\Rightarrow$(2): Let $Z$ be a set and $X$ a $Z$-computable instance of $\Psf$. Suppose that $A_1,A_2,\ldots$ are $Z$-immune. Let $G$ be as in Lemma \ref{lem:immune-almost-join}: $G$ is $Z$-immune, and $G[i] =^* A_i$ for all $i$. Then there is a solution $Y$ to $X$ such that $G$ is $Z \oplus Y$-immune. If, for some $i$, $A_i$ was not $Z \oplus Y$-immune, then $Z \oplus Y$ would compute an infinite subset of $A_i$, and hence of $G$ (since $G[i] =^* A_i$). This cannot happen as $G$ is $Z \oplus Y$-immune.
\end{proof}

\begin{lemma}
Preservation of $\omega$ immunities implies avoidance of constant-bound traces for $\omega$ closed sets.
\end{lemma}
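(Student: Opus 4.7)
The plan is to reduce avoidance of constant-bound traces for $\omega$ closed sets directly to preservation of $\omega$ immunities by coding each ``$k$-trace problem'' for $\Ccal_s$ as an immune set. Given $Z$, a collection of closed sets $(\Ccal_s)_{s < \omega}$ with no $Z$-computable constant-bound trace, and a $Z$-computable $\Psf$-instance $X$, I would define for each pair $(s, k) \in \omega \times \omega$ the auxiliary set
\[
A_s^{(k)} = \left\{ \la n, F\ra : n \in \omega,\ F \subseteq 2^n,\ |F| = k,\ \Ccal_s \cap [F] \neq \emptyset \right\},
\]
viewed as a subset of $\omega$ under a fixed coding of finite sets of strings. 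This countable family is the one to which I will apply preservation of $\omega$ immunities.

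The key step is the equivalence, for any oracle $W$: \emph{$A_s^{(k)}$ is $W$-immune if and only if $\Ccal_s$ has no $W$-computable $k$-trace}. One direction is immediate, since a $W$-computable $k$-trace $(F_n)_n$ provides the $W$-computable infinite subset $\{\la n, F_n\ra : n \in \omega\}$ of $A_s^{(k)}$. For the converse, suppose $B \subseteq A_s^{(k)}$ is $W$-computable and infinite. Since only finitely many elements of $A_s^{(k)}$ share any fixed first coordinate $n$, the first coordinates of $B$ must be unbounded. For each $n$, I would $W$-computably enumerate $B$ until I find some $\la n', F'\ra \in B$ with $n' \geq n$, and then set $\hat F_n = \{\sigma \uhr n : \sigma \in F'\}$, padded arbitrarily to size exactly $k$. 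Because restrictions of extendible strings are extendible, $\Ccal_s \cap [\hat F_n] \neq \emptyset$, so $(\hat F_n)_n$ is a $W$-computable $k$-trace of $\Ccal_s$, contradicting the hypothesis.

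With this equivalence in hand, each $A_s^{(k)}$ is $Z$-immune. Applying preservation of $\omega$ immunities to the family $\{A_s^{(k)} : s, k \in \omega\}$ yields a $\Psf$-solution $Y$ to $X$ such that each $A_s^{(k)}$ remains $(Z \oplus Y)$-immune. Relativizing the equivalence to $Z \oplus Y$, every $\Ccal_s$ has no $(Z \oplus Y)$-computable $k$-trace for any $k \in \omega$, hence no $(Z \oplus Y)$-computable constant-bound trace, which is exactly what avoidance of constant-bound traces for $\omega$ closed sets requires. Unlike several of the earlier reductions in the paper, this argument requires no auxiliary forcing construction of an extra oracle $G$; the one delicate point to verify is that the padding step in the equivalence can be carried out $W$-computably, which relies on the fact that extendibility of strings in $\Ccal_s$ is preserved under restriction.
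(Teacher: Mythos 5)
Your proof is correct and takes essentially the same approach as the paper's: define, for each closed set and each bound $k$, the auxiliary set of codes of extendible $k$-sets of strings, observe the equivalence between immunity of that set and the absence of a $k$-trace, and apply preservation of $\omega$ immunities to the countable family indexed by pairs $(s,k)$. You fill in the one detail the paper states without proof (that an infinite subset of $A_s^{(k)}$ yields a $k$-trace, via restriction and padding); this is indeed the right justification.
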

\begin{proof}
Suppose a problem $\Psf$ admits preservation of $\omega$ immunities. 
Fix a set $Z$ and a countable collection of closed sets $\Ccal_0, \Ccal_1, \dots \subseteq 2^\omega$ with no $Z$-computable constant-bound trace.
For every $n, k \in \omega$, let $A_{n,k}$ be the set of all finite coded $k$-sets $F$ of binary strings such that every string in $F$ has the same length, and such that $[F] \cap \Ccal_n \neq \emptyset$. Every infinite subset of $A_{n,k}$ computes a $k$-trace of $\Ccal_n$, so $A_{n,k}$ is $Z$-immune. Conversely, every $k$-trace of $\Ccal_n$ computes an infinite subset of $A_{n,k}$. Since $\Psf$ admits preservation of $\omega$ immunities, there is a $\Psf$-solution $Y$ to $X$ such that for every $n, k$, $A_{n,k}$ is $Z \oplus Y$-immune. In particular, for every $n \in \omega$, $\Ccal_n$ has no $Z \oplus Y$-computable constant-bound trace.
\end{proof}

\begin{lemma}\label{lem:closed-set-baire-to-cantor-1}
Fix a set $Z$ and a closed set $\Ccal \subseteq \omega^\omega$ in the Baire space with no $Z$-computable member. There exists a closed set $\Dcal \subseteq 3^\omega$ with no $Z$-computable member, and such that every member of $\Ccal$ computes a member of $\Dcal$.	
\end{lemma}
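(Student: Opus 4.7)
The plan is to encode $\omega^\omega$ into $3^\omega$ via a computable injection and take $\Dcal$ to be an appropriate closure, using the third symbol to control boundary behavior. First, I define the unary encoding $\phi : \omega^\omega \to 3^\omega$ by
\[
\phi(f) = 0^{f(0)} 1 0^{f(1)} 1 0^{f(2)} 1 \cdots,
\]
which lies inside $2^\omega \subseteq 3^\omega$. Then $\phi$ is computable and $\phi^{-1}$ is computable on its image, so $\phi(f) \equiv_T f$ for every $f$, and in particular every $f \in \Ccal$ computes $\phi(f)$. The first candidate is $\Dcal := \overline{\phi(\Ccal)}$, the topological closure in $3^\omega$, which is automatically closed.

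Next I would analyze elements of $\Dcal$ by splitting into two cases. Let $T \subseteq \omega^{<\omega}$ be the pruned tree with $[T] = \Ccal$. A sequence $g \in \Dcal$ either (a) has infinitely many $1$s, in which case it decodes uniquely to some $f \in \omega^\omega$, and the requirement that every prefix of $g$ extends to $\phi(f') \in \phi(\Ccal)$ forces $f \uhr n \in T$ for every $n$, so $f \in [T] = \Ccal$ and $g = \phi(f) \equiv_T f \not\leq_T Z$; or (b) $g$ has only finitely many $1$s, in which case $g = \phi(\sigma) \cdot 0^\omega$ for some $\sigma \in T$ such that the set $\{f(|\sigma|) : f \in \Ccal,\, \sigma \prec f\}$ is unbounded.

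The main obstacle is case (b): boundary limits of the form $\phi(\sigma) \cdot 0^\omega$ are always $Z$-computable (from $\sigma$, which is a finite string). To remove them while preserving the Medvedev reduction, I would exploit the third symbol $2$ via a forcing-style refinement: enumerate the $Z$-computable $g_0, g_1, \ldots \in 3^\omega$ and build a decreasing sequence of pruned trees $S_0 \supseteq S_1 \supseteq \cdots$ in $3^{<\omega}$ with $[S_0] = \Dcal$, removing at stage $s$ a neighborhood of $g_s$ from the tree. The key observation is that if $g_s \in [S_s]$ and $g_s \leq_T Z$, then since $\phi^{-1}$ is computable on its image, $g_s$ cannot equal any $\phi(f)$ with $f \in \Ccal$ (else $f \leq_T Z$, contradicting the hypothesis). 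So $g_s$ must be a genuine boundary point, and we can excise a clopen neighborhood $[\tau]$ of $g_s$ by rerouting computations through alternate encodings using the $2$-symbol, so that every $f \in \Ccal$ still computes a path through $S_{s+1}$.

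The hard part, which I expect to be the crux, is precisely this preservation: showing that boundary neighborhoods can be excised without losing the computable reductions from $\Ccal$. The natural way to achieve this is to not fix a single encoding $\phi$, but to allow the ``witness'' $g_f \in \Dcal$ for $f \in \Ccal$ to vary with the stage: at stage $s$, if $\phi(f)$ lands in the removed neighborhood, we redirect $f$ to a different $f$-computable path using the $2$-augmented tree $S_{s+1}$. Setting $\Dcal := \bigcap_s [S_s]$ gives a closed set in $3^\omega$ with no $Z$-computable member, and the invariant guarantees that every $f \in \Ccal$ retains an $f$-computable member.
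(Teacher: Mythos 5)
Your proposal correctly diagnoses the obstacle with the naive unary encoding --- that boundary limit points of the form $\phi(\sigma)\cdot 0^\omega$ are $Z$-computable --- but the fix you sketch (excising $Z$-computable boundary points by a $Z'$-forcing construction and ``rerouting'' encodings through the $2$-symbol) is never actually carried out, and you yourself flag it as ``the hard part.'' Concretely, it is unclear how to remove a clopen neighborhood $[\tau]$ containing $\phi(\sigma)\cdot 0^\omega$ while guaranteeing, uniformly for every $f \in \Ccal$ with $\phi(f) \in [\tau]$, an $f$-computable path in the pruned tree; moreover, even if each $[S_s]$ retains an $f$-computable member, it does not follow that $f$ computes a single member of $\bigcap_s [S_s]$, since $f$ does not know the (necessarily non-$Z$-computable) construction of the $S_s$'s. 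So as written this is a gap, not a completed argument.

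The paper avoids the boundary problem entirely with a different encoding: fix a $Z$-computable infinite tree $T \subseteq 2^{<\omega}$ with no $Z$-computable path, and map $P \in \Ccal$ to $\hat{P} = \sigma_0 2 \sigma_1 2 \sigma_2 2 \cdots$, where $\sigma_n$ is the \emph{leftmost string in $T$ of length $P(n)$} (rather than $0^{P(n)}$). Take $\Dcal$ to be the closure of $\{\hat{P} : P \in \Ccal\}$. Now a limit point with infinitely many $2$'s decodes (using closedness of $\Ccal$) to some $\hat P$ with $P \in \Ccal$, hence is not $Z$-computable since $P$ is not. A limit point with finitely many $2$'s has a tail lying on a path of $[T]$, hence is not $Z$-computable either, because $T$ has no $Z$-computable path. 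So no forcing is needed: the tree $T$ does the work your third symbol was supposed to do, and the boundary points are non-computable by design rather than by pruning. You might revisit your proposal by replacing $0^{f(n)}$ with this ``length-coded-as-tree-node'' trick; that single change collapses the hard case and gives the full lemma in a few lines.
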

\begin{proof}
Fix $Z$ and $\Ccal$. Let $T \subseteq 2^{<\omega}$ be a $Z$-computable infinite tree with no $Z$-computable infinite path. Given some $P \in \Ccal$, let $\hat{P} \in 3^\omega$ be defined by $\sigma_02\sigma_12\sigma_22\sigma_32\dots$, where for every $n \in \omega$, $\sigma_n$ is the left-most string in $T$ of length $P(n)$. Let $\Dcal$ be the closure of $\{ \hat{P} : P \in \Ccal \}$. Note that for any $X \in \Dcal \setminus \{ \hat{P} : P \in \Ccal \}$, $X = \sigma\cat Y$ for some $\sigma \in 3^{<\omega}$ and $Y \in [T]$, and that neither $\{ \hat{P} : P \in \Ccal \}$ nor $[T]$ has $Z$-computable members. Therefore $\Dcal$ has no $Z$-computable member. Moreover any $P \in \Ccal$ computes $\hat{P} \in \Dcal$.
\end{proof}

\begin{corollary}
Avoidance of 1 closed set in the Cantor space implies avoidance of $\omega$ closed sets in the Baire space.
\end{corollary}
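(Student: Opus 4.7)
The plan is a two-step reduction that first collapses $\omega$ Baire-space closed sets into a single Baire-space closed set, and then transfers to the Cantor space via \Cref{lem:closed-set-baire-to-cantor-1}. Given $Z$, a countable family of closed sets $\Ccal_0,\Ccal_1,\ldots \subseteq \omega^\omega$ none of which has a $Z$-computable member, and a $Z$-computable instance $X$ of $\Psf$, I would set
$$
\Ecal = \bigcup_{n \in \omega} \{n \cat P : P \in \Ccal_n\} \subseteq \omega^\omega.
$$
First I would verify that $\Ecal$ is closed: any convergent sequence $f_k \to f$ in $\omega^\omega$ with $f_k \in \Ecal$ has its zeroth coordinate eventually equal to $f(0) = n$, whence $f_k = n \cat P_k$ for large $k$ with $P_k \in \Ccal_n$, and by closure of $\Ccal_n$ the limit $P$ lies in $\Ccal_n$. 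This step is the main structural observation and uses the $\omega$-valued alphabet essentially; the naive Cantor-space analog $\bigcup_n \{0^n 1 \cat P : P \in \Ccal_n\}$ would fail, because $0^\omega$ would become a (computable) limit point, and no obvious choice of separators in $2^{<\omega}$ avoids this.

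Next, $\Ecal$ has no $Z$-computable member, since any such member must have the form $n \cat P$ with $P \le_T Z$ and $P \in \Ccal_n$, contradicting the hypothesis on $\Ccal_n$. Apply \Cref{lem:closed-set-baire-to-cantor-1} to $\Ecal$ relative to $Z$ to obtain a closed set $\Dcal \subseteq 3^\omega \subseteq 2^\omega$ with no $Z$-computable member, such that every member of $\Ecal$ $Z$-uniformly computes a member of $\Dcal$. Now invoke avoidance of $1$ closed set in the Cantor space on $\Dcal$ and the $\Psf$-instance $X$, producing a $\Psf$-solution $Y$ such that $\Dcal$ has no $(Z \oplus Y)$-computable member.

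It remains to argue that $Y$ simultaneously avoids every $\Ccal_n$. If some $\Ccal_n$ had a member $P \le_T Z \oplus Y$, then $n \cat P \in \Ecal$ would also be $(Z \oplus Y)$-computable, and the uniformity built into the construction in \Cref{lem:closed-set-baire-to-cantor-1} would then yield a $(Z \oplus Y)$-computable member of $\Dcal$, contradicting the choice of $Y$. I do not anticipate a significant obstacle here; the only subtle point to verify is that the computation of the associated path $\hat{P} \in \Dcal$ from $P \in \Ecal$ is indeed uniform in $P$ relative to $Z$, which is visible from the construction of $\Dcal$ in the preceding lemma (the tree $T$ used there is $Z$-computable, so producing $\hat{P}$ from $P$ is a $Z$-effective procedure).
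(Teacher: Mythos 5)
Your proof matches the paper's almost exactly: collapse the family into a single Baire-space closed set $\Ecal = \{\,n\cat P : P \in \Ccal_n\,\}$, push it to a ternary closed set $\Dcal$ via \Cref{lem:closed-set-baire-to-cantor-1}, and then appeal to avoidance of one closed set. The only slip is the claim $3^\omega \subseteq 2^\omega$, which is false as written — as the paper does, you must pass to a closed set $\tilde\Dcal \subseteq 2^\omega$ obtained by a computable binary coding of ternary sequences — but this is a routine fix and does not affect the argument.
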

\begin{proof}
Suppose a problem $\Psf$ admits avoidance of 1 closed set in the Cantor space. 
Fix a set $Z$, countably many closed sets in the Baire space $\Ccal_0, \Ccal_1, \dots \subseteq \omega^\omega$ with no $Z$-computable member, and a $Z$-computable $\Psf$-instance $X$. Let $\Ecal = \{n^\frown P : P \in \Ccal_n \}$. In particular, $\Ecal$ is a closed set with no $Z$-computable member. By~\Cref{lem:closed-set-baire-to-cantor-1}, there is a closed set $\Dcal \subseteq 3^\omega$ with no $Z$-computable member, and such that every member of $\Ecal$ computes a member of $\Dcal$. Let $\tilde{\Dcal} \subseteq 2^\omega$ be the closed set obtained from $\Dcal$ by fixing a binary coding of the ternary strings. In particular, any member of $\Ccal_n$ computes a member of $\tilde{\Dcal}$, and $\tilde{\Dcal}$ has no $Z$-computable members. Since $\Psf$ admits avoidance of 1 closed set in the Cantor space, there is a $\Psf$-solution $Y$ to $X$ such that $\tilde{\Dcal}$ has no $Z \oplus Y$-computable member. In particular, for every $n \in \omega$, $\Ccal_n$ has no $Z \oplus Y$-computable member.
\end{proof}

We now prove that preservation of 1 immunity is strictly above the hierarchy of avoidance of constant-bound traces. Let $\emo$ (Erd\H{o}s-Moser) be the problem whose instances are colorings $f : [\omega]^2 \to 2$. An $\emo$-solution to $f$ is an infinite set $H \subseteq \omega$ such that for every $x < y < z \in H$, and every $i < 2$, if $f(x, y) = i$ and $f(y, z) = i$, then $f(x, z) = i$.

\begin{theorem}
There is a problem that admits avoidance of constant-bound traces for $\omega$ closed sets but not preservation of 1 immunity.
\end{theorem}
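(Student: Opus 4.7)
The plan is to take $\Psf = \emo$, the Erd\H{o}s--Moser theorem. By Patey~\cite{PateyCombinatorial}, $\emo$ admits avoidance of constant-bound traces for $\omega$ closed sets, so it suffices to exhibit a computable coloring $f : [\omega]^2 \to 2$ and an immune set $A \subseteq \omega$ such that every infinite $\emo$-solution $H$ to $f$ (that is, every infinite transitive subtournament) computes an infinite subset of $A$.

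We take $A$ to be an immune but \emph{not} hyperimmune $\Delta^0_2$ set, equipped with a computable $\Delta^0_2$ approximation $(A_s)_{s\in\omega}$ and a computable strong array $(F_n)_{n\in\omega}$ of pairwise disjoint finite intervals witnessing non-hyperimmunity: thus $F_n < F_{n+1}$ and $F_n \cap A \neq \emptyset$ for every $n$, and without loss of generality $|F_n| \geq 2$. The coloring $f$ is built blockwise along $(F_n)$. Between blocks we direct all edges from the lower to the higher block, so the block index induces the linear order on any infinite transitive $H$. Within each block $F_n$ we place a computable tournament, chosen using the stage-$n$ approximation $A_n$, whose transitive subtournaments are sparse and whose top vertex is forced to lie in $F_n \cap A_n$ whenever this set is nonempty.

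Given this setup, any infinite transitive set $H$ meets infinitely many blocks $F_n$, and its intersection with each such block is a transitive subtournament of the block tournament. The verification is to show that, for cofinitely many $n$, this intersection must include the distinguished top vertex of $F_n$, which by construction lies in $F_n \cap A$ for all but finitely many $n$. This yields an infinite $H$-computable subset of $A$ and hence breaks the immunity of $A$ relative to $H$, completing the proof.

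The main obstacle is controlling the interaction between the stage-$n$ approximation $A_n$ and the true set $A$: since $A_n$ can err on elements of $F_n$, the within-block tournaments must be robust enough that such finite error affects only finitely many blocks. This is where the flexibility afforded by the non-hyperimmunity of $A$ is essential: by thinning the computable strong array $(F_n)$ we may arrange that each $F_n$ is inspected only at a stage where $A_n$ is already correct on $F_n$, so that the distinguished top vertex genuinely lies in $A$ from some block onwards, and the whole scheme is compatible with the computability of $f$.
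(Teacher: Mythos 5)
You choose the same problem $\Psf=\emo$ and invoke the same result of Patey for the positive half, so the only difference is the negative half, where the paper composes two known results (Rice: a computable $\emo$-instance all of whose solutions compute a DNC function; Patey: a $\Delta^0_2$ immune set $A$ such that every DNC function computes an infinite subset of $A$), while you try to build the coding instance directly. The direct plan has two genuine gaps.

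First, the assertion that ``for cofinitely many $n$, $H\cap F_n$ must include the distinguished top vertex of $F_n$'' is unjustified and, as stated, false. With all inter-block edges oriented from lower to higher block, transitivity of $H$ reduces to transitivity of each $H\cap F_n$; but any singleton (or the empty set) is transitive, so an infinite transitive $H$ can simply pick one non-top vertex from each block it meets and never touch any $v_n$. Even if you make the tournament on $F_n\setminus\{v_n\}$ Ramsey-like so its transitive subtournaments are small, you only bound $|H\cap F_n|$ from above, which is no contradiction. Some further mechanism is needed to force $H$ to actually reveal an element of $A$, and the proposal does not supply one.

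Second, the claim that ``by thinning the computable strong array $(F_n)$ we may arrange that each $F_n$ is inspected only at a stage where $A_n$ is already correct on $F_n$'' is not achievable. Since $A$ is $\Delta^0_2$ and noncomputable, its settling time on an infinite computable array cannot be computably bounded: for any computable choice of inspection stages $g(n)$, there are infinitely many $n$ with $A_{g(n)}\cap F_n\neq A\cap F_n$ (otherwise $A$ would be computable on $\bigcup_n F_n$, contradicting immunity). So the ``distinguished top vertex'' will genuinely lie outside $A$ for infinitely many blocks, and the error is not confined to finitely many $n$. The paper sidesteps both of these issues entirely by routing through DNC functions, for which the diagonalization does not require the approximation to stabilize on any computable schedule.
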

\begin{proof}
Patey~\cite{PateyCombinatorial} proved that $\emo$ admits avoidance of constant-bound traces for $\omega$ closed sets. On the other hand, Rice~\cite{RiceThin} constructed a computable instance of $\emo$ such that every solution computes a diagonally non-computable function, while Patey~\cite{Patey2016Partial} constructed a $\Delta^0_2$ immune set $A$ such that every diagonally non-computable function computes an infinite subset of $A$. This shows that $\emo$ does not admit preservation of 1 immunity.
\end{proof}

The following question is left open:

\begin{question}
Does preservation of 1 immunity implies 	avoidance of 1 closed set?
\end{question}

The combinatorics used to prove that a problem admits preservation of 1 immunity and avoidance of 1 closed set are very similar, which could be taken as an argument in favor of a positive answer.

\bibliography{References}

\newcommand{\etalchar}[1]{$^{#1}$}
\begin{thebibliography}{HJKH{\etalchar{+}}08}

\bibitem[AKK03]{Arslanov2003Splitting}
M.~M. Arslanov, I.~Sh. Kalimullin, and S.~B. Kuper.
\newblock Splitting properties of total enumeration degrees.
\newblock {\em Algebra Logika}, 42(1):3--25, 125, 2003.

\bibitem[CGHJJ01]{Cholak2001Free}
Peter~A. Cholak, Mariagnese Giusto, Jeffry~L. Hirst, and Carl~G. Jockusch~Jr.
\newblock Free sets and reverse mathematics.
\newblock {\em Reverse mathematics}, 21:104--119, 2001.

\bibitem[CJS01]{Cholak2001strength}
Peter~A. Cholak, Carl~G. Jockusch, and Theodore~A. Slaman.
\newblock {On the strength of Ramsey's theorem for pairs}.
\newblock {\em Journal of Symbolic Logic}, 66(01):1--55, 2001.

\bibitem[CP19]{Cholak2019Thin}
Peter~A. Cholak and Ludovic Patey.
\newblock Thin set theorems and cone avoidance.
\newblock To appear., 2019.

\bibitem[DJ09]{Dzhafarov2009Ramseys}
Damir~D. Dzhafarov and Carl~G. Jockusch.
\newblock Ramsey's theorem and cone avoidance.
\newblock {\em Journal of Symbolic Logic}, 74(2):557--578, 2009.

\bibitem[DNWY06]{DoNiWe06}
Rod Downey, Andre Nies, Rebecca Weber, and Liang Yu.
\newblock Lowness and {$\Pi^0_2$} nullsets.
\newblock {\em J. Symbolic Logic}, 71(3):1044--1052, 2006.

\bibitem[GS07]{Groszek2007Moduli}
Marcia~J Groszek and Theodore~A Slaman.
\newblock Moduli of computation (talk).
\newblock {\em Buenos Aires, Argentina}, 2007.

\bibitem[HJKH{\etalchar{+}}08]{Hirschfeldt2008strength}
Denis~R. Hirschfeldt, Carl~G. Jockusch, Bj{\o}rn Kjos-Hanssen, Steffen Lempp,
  and Theodore~A. Slaman.
\newblock {The strength of some combinatorial principles related to {R}amsey's
  theorem for pairs}.
\newblock {\em Computational Prospects of Infinity, Part II: Presented Talks,
  World Scientific Press, Singapore}, pages 143--161, 2008.

\bibitem[Joc68]{Jockusch1968Semirecursive}
Carl~G. Jockusch, Jr.
\newblock Semirecursive sets and positive reducibility.
\newblock {\em Trans. Amer. Math. Soc.}, 131:420--436, 1968.

\bibitem[Joc72]{Jockusch1972Ramseys}
Carl~G. Jockusch.
\newblock Ramsey's theorem and recursion theory.
\newblock {\em Journal of Symbolic Logic}, 37(2):268--280, 1972.

\bibitem[JS72]{Jockusch197201}
Carl~G. Jockusch and Robert~I. Soare.
\newblock {$\Pi^0_1$} classes and degrees of theories.
\newblock {\em Transactions of the American Mathematical Society}, 173:33--56,
  1972.

\bibitem[Kau91]{Kautz1991Degrees}
Steven~M. Kautz.
\newblock {\em Degrees of random sets}.
\newblock PhD thesis, Citeseer, 1991.

\bibitem[Liu12]{Liu2012RT22}
Lu~Liu.
\newblock {RT$^2_2$ does not imply WKL$_0$}.
\newblock {\em Journal of Symbolic Logic}, 77(2):609--620, 2012.

\bibitem[Liu15]{Liu2015Cone}
Lu~Liu.
\newblock Cone avoiding closed sets.
\newblock {\em Transactions of the American Mathematical Society},
  367(3):1609--1630, 2015.

\bibitem[LST13]{Lerman2013Separating}
Manuel Lerman, Reed Solomon, and Henry Towsner.
\newblock {Separating principles below {R}amsey's theorem for pairs}.
\newblock {\em Journal of Mathematical Logic}, 13(02):1350007, 2013.

\bibitem[Pat15]{PateyCombinatorial}
Ludovic Patey.
\newblock Combinatorial weaknesses of {R}amseyan principles.
\newblock In preparation. Available at
  \url{http://ludovicpatey.com/media/research/combinatorial-weaknesses-draft.pdf},
  2015.

\bibitem[Pat16a]{Patey2016Partial}
Ludovic Patey.
\newblock Partial orders and immunity in reverse mathematics.
\newblock 2016.

\bibitem[Pat16b]{Patey2016weakness}
Ludovic Patey.
\newblock The weakness of being cohesive, thin or free in reverse mathematics.
\newblock {\em Israel J. Math.}, 216(2):905--955, 2016.

\bibitem[Pat17]{Patey2017Iterative}
Ludovic Patey.
\newblock Iterative forcing and hyperimmunity in reverse mathematics.
\newblock {\em Computability}, 6(3):209--221, 2017.

\bibitem[Pat19]{Patey2019Ramsey}
Ludovic Patey.
\newblock Ramsey-like theorems and moduli of computation.
\newblock {\em arXiv preprint arXiv:1901.04388}, 2019.

\bibitem[Ric]{RiceThin}
Brian Rice.
\newblock Thin set for pairs implies {DNR}.
\newblock {\em Notre Dame J. Formal Logic}.
\newblock To appear.

\bibitem[Ros82]{Rosenstein1982Linear}
Joseph~G. Rosenstein.
\newblock {\em Linear orderings}, volume~98 of {\em Pure and Applied
  Mathematics}.
\newblock Academic Press, Inc. [Harcourt Brace Jovanovich, Publishers], New
  York-London, 1982.

\bibitem[Sim09]{Simpson2009Subsystems}
Stephen~G. Simpson.
\newblock {\em {Subsystems of Second Order Arithmetic}}.
\newblock Cambridge University Press, 2009.

\bibitem[SS95]{Seetapun1995strength}
David Seetapun and Theodore~A. Slaman.
\newblock {On the strength of {R}amsey's theorem}.
\newblock {\em Notre Dame Journal of Formal Logic}, 36(4):570--582, 1995.

\bibitem[TZ01]{TeZa01}
Sebastiaan~A. Terwijn and Domenico Zambella.
\newblock Computational randomness and lowness.
\newblock {\em J. Symbolic Logic}, 66(3):1199--1205, 2001.

\bibitem[Wan14a]{Wang2014Definability}
Wei Wang.
\newblock The definability strength of combinatorial principles, 2014.
\newblock To appear. Available at http://arxiv.org/abs/1408.1465.

\bibitem[Wan14b]{Wang2014Some}
Wei Wang.
\newblock Some logically weak {R}amseyan theorems.
\newblock {\em Advances in Mathematics}, 261:1--25, 2014.

\end{thebibliography}
\bibliographystyle{alpha}

\end{document}